\renewcommand\section{\@startsection {section}{1}{\z@}%
	{-2ex \@plus -1ex \@minus -.2ex}%
	{1ex \@plus.1ex}%
	{\normalfont\bf\sffamily}}
\renewcommand\subsection{\@startsection{subsection}{2}{\z@}%
	{-1.75ex\@plus -0.4ex \@minus -.2ex}%
	{0.6ex \@plus .1ex}%
	{\normalfont\small\bf\sffamily}}
\renewcommand\subsubsection{\@startsection{subsubsection}{3}{\z@}%
	{-0.6ex\@plus -0.2ex \@minus -.2ex}%
	{0.4ex \@plus .1ex}%
	{\normalfont\normalsize\it}}
\renewcommand\paragraph{\@startsection{paragraph}{4}{\z@}%
	{0.2ex \@plus0.2ex \@minus0.1ex}{-0.5em}%
	{\normalfont\normalsize\bfseries}}
\def\ps@headings{%
	\let\@oddfoot\@empty
	\let\@evenfoot\@empty
	\def\@evenhead{\small\sffamily\thepage\hfil\slshape\leftmark}%
	\def\@oddhead{\small\sffamily{\slshape\rightmark}\hfil\thepage}%
	\let\@mkboth\markboth
	\def\chaptermark##1{\markboth{{\ifnum \c@secnumdepth >\m@ne
				\if@mainmatter \@chapapp\ \thechapter. \ \fi \fi ##1}}{}}%
	\def\sectionmark##1{\markright {{\ifnum \c@secnumdepth >\z@
				\thesection. \ \fi ##1}}}}
\def\fbf#1{\setbox0=\hbox{$#1$}\kern-0.10\wd0
	\lower0.02em\copy0\kern-\wd0 \lower0.02em\hbox{\kern+0.04em\copy0}\kern-\wd0
	\raise0.00em\copy0\kern-\wd0 \raise0.00em\hbox{\kern-0.04em\box0}}
\numberwithin{equation}{section}
\newtheorem{theorem}{Theorem}[section]
\newtheorem{remark}[theorem]{Remark}
\newtheorem{proposition}[theorem]{Proposition}
\def\maketitle{\par\noindent{\LARGE\bf\sffamily\thetitle}\\[1.4ex]
	{\large\theauthor}\\[0.6ex]
	\textit{\thetextinfo}\\[0.2ex]
	{\small\today}\par\vglue1.4\bigskipamount}
\def\title#1{\def\thetitle{#1}}
\def\author#1{\def\theauthor{#1}}
\def\textinfo#1{\def\thetextinfo{#1}}
\def\be{\begin{equation}}
	\def\ee{\end{equation}}
\def\bse{\begin{subequations}}
	\def\ese{\end{subequations}}
\newtheorem{RHP}{RH Problem}[section]
\definecolor{deeppurple}{rgb}{0.5, 0, 0.7}
\def\Wr{\mathop{\rm Wr}\nolimits}
\def\gl{\mathrel{\mathpalette\overl@ss>}}
\def\sech{\mathop{\rm sech}\nolimits}
\def\Real{\mathbb{R}}
\def\i{\text{i}}
\def\Re{\mathop{\rm Re}\nolimits}
\def\Im{\mathop{\rm Im}\nolimits}
\def\Res{\mathop{\rm Res}\limits}
\def\d{\mathrm{d}}
\def\e{\mathop{\rm e}\nolimits}
\def\@#1{{\mathbf{#1}}}
\def\_#1{{\mathsf{#1}}}
\def\min{\mathop{\rm min}\nolimits}
\def\max{\mathop{\rm max}\nolimits}
\def\note[#1]{\marginpar{\color{red}[#1]}}
\def\XXint#1#2#3{{\setbox0=\hbox{$#1{#2#3}{\int}$}
		\vcenter{\hbox{$#2#3$}}\kern-.5\wd0}}
\def\1{{\bf 1}}
\def\e{\mathrm{e}}
\let\trueparagraph=\paragraph
\def\paragraph#1{\par\smallskip\trueparagraph{\rm\textbf{#1}}}
\def\br{\begingroup\color{red}}
\def\er{\endgroup}
\begin{document}
\pagestyle{plain}
\title{\bf Soliton resolution and   asymptotic stability of $N$-soliton solutions for   the  defocusing  mKdV  equation with a non-vanishing background }
\author{\large
	Zechuan Zhang\,$^{1}$, Taiyang Xu\,$^1$, and Engui Fan\,$^1$}
\textinfo
{\normalsize\it
	1: School of Mathematical Sciences, Fudan University, Shanghai 200433, P.R. China\\
	\normalsize\it
	Authors' Email: \{17110180013, tyxu19, faneg\}@fudan.edu.cn}

\maketitle

\kern-4ex
\begin{abstract}
\noindent
	We analytically study the large-time asymptotics of the solution of the 
	defocusing modified Korteweg-de Vries (mKdV) equation under a symmetric non-vanishing background, which supports the emergence of solitons. It is demonstrated that the asymptotic expansion of the solution at the large time could verify the renowned soliton resolution conjecture. Moreover, the asymptotic stability of $N$-soliton solution is also exhibited in the present work.  We establish our results by performing a $\bar{\partial}$-nonlinear steepest descent analysis to the associated Riemann-Hilbert (RH) problem. 
	\\
	{\bf Keywords:} The defocusing  mKdV  equation, Riemann-Hilbert problem,  $\bar{\partial}$ steepest descent method, large-time asymptotics,  asymptotic stability,  soliton resolution.\\
	{\bf   Mathematics Subject Classification:} 35Q51; 35Q15; 35C20; 37K15; 37K40.
\end{abstract}

\baselineskip=16pt

\tableofcontents%

\section{Introduction}
We investigate the Cauchy problem for the defocusing modified Korteweg-de Vries (mKdV) equation with finite density initial data
\begin{align}
	&q_t(x,t)+q_{xxx}(x,t)-6q^2(x,t)q_x(x,t)=0,\quad\,(x,t)\in\mathbb{R}\times\mathbb{R}^+,\label{mkdvequation}\\
	&q(x,0)=q_0(x) \to\pm1,\quad\, x\rightarrow \pm\infty.\label{boundary}
\end{align}
Let  $\lim_{x\to\pm\infty}q_0(x) = q_\pm$. 
Note that, if $q_\pm = \pm A$ as $x\to\pm\infty$, one can always reduce oneself to either $A = 1$ or $A = -1$ without loss of generality thanks to scaling invariance of mKdV equation. Indeed, one can make $u=A^{-1}q$, $\tilde{x}=Ax$ and $\tilde{t}=A^3t$, it is consequent that $u(\tilde{x},\tilde{t})$ satisfies \eqref{mkdvequation} with the normalized
boundary condition $u_0(\tilde{x})\to\pm 1 $ as $\tilde{x}\to\pm\infty$.
Note also that the case $q_\pm = \mp 1$ is trivially reduced to the present one thanks to the invariance of the mKdV equation under change of sign [i.e., the transformation $q(x,t) \mapsto - u(x,t)$ ].

The mKdV equation arises in various physical fields, such as acoustic wave and phonons in a certain anharmonic lattice \cite{RN1,RN2}, Alf\'{e}n wave in a cold collision-free plasma \cite{RN3}, meandering ocean currents \cite{RN4}, hyperbolic surfaces \cite{RN5}, and Schottky barrier transmission \cite{RN6}.

There are plenty of results on the mathematical properties for the mKdV equation.
Here we cite only those that are closed to our study.  In the late 1970s,  the inverse scattering  theory was  applied to  solve  the mKdV equation and investigate large-time asymptotics for the mKdV equation.
For example, Wadati investigated the focusing mKdV equation with zero boundary conditions   and derived simple-pole, double-pole and triple-pole solutions \cite{RN7,RN8}.
The long-time behavior of the defocusing mKdV equation with given Schwartz  initial data is provided by  Segur and Ablowitz without consideration of  solitons \cite{RN21}.
Deift and Zhou developed nonlinear steepest descent method  and obtained the long-time asymptotic behavior of the defocusing mKdV equation with  the  Schwartz initial data  in their seminal work \cite{RN9}.
This approach was  further developed   into  a $\bar\partial$ steepest descent method    by McLaughlin and   Miller
to analyze asymptotics of orthogonal polynomials with non-analytical weights  \cite{MandM2006,MandM2008}.  Later, with Dieng, they applied it to investigate the defocusing NLS equation under essentially minimal regularity assumptions on finite mass initial data \cite{DandMNLS}.
Boutet de Monvel \emph{et al.}  discussed the initial boundary value problem of  defocusing mKdV equation on the half line
by using the Fokas method \cite{Monvel}.   For the weighted Sobolev  initial data,     Chen   and Liu \emph{et al.}  have  studied the large-time asymptotic behavior of defocusing mKdV equation with zero boundary conditions  without consideration of solitons  \cite{chenliu1}, and the long-time asymptotic behavior of  focusing mKdV equation with zero boundary conditions  with   solitons \cite{chenliu2}.
However, for defocusing mKdV equation with nonzero boundary conditions,  soliton solutions  will appear due to  non-empty discrete spectrum
for finite mass initial data.    It  is necessary to consider the effect of soliton solutions when we study large-time asymptotic behavior, which naturally require a more detailed  necessary  description to obtain the large-time asymptotics of the  defocusing  mKdV equation.

Our goal in this paper is to give detailed asymptotic analysis for the  defocusing  mKdV equation  (\ref{mkdvequation})  with finite density type initial data in the given space-time solitonic regions $|x/t+4|<2$; see Figure \ref{cone} for an illustration.
\begin{figure}[htbp]
    \begin{center}
    \begin{tikzpicture}[node distance=2cm]
    \draw[-latex](-5.5,0)--(5,0)node[right]{$x$};
    \draw[-latex](0,0)--(0,3)node[above]{$t$};
    \draw[ red](0,0)--(-5,5/6)node[left,black]{\scriptsize{$\xi=-6 $}};
    \draw[ red](0,0)--( -5,2.5)node[above,black]{\scriptsize{$\xi=-2 $}};

    \node[below]{$0$};
    \coordinate (A) at (-5.2, 0.2);
	\fill (A) node[right] {\scriptsize{Solitonless region}};
    \coordinate (C) at (-5.5, 1.4);
    \fill (C) node[right] {\scriptsize{Solitonic region}};
     \coordinate (D) at (3, 1.2);
	 \fill (D) node[left] {\scriptsize{Solitonless region}};
    \coordinate (E) at (-0.4, 1.8);
    \end{tikzpicture}
    \caption{\small  The $(x,t)$-plane is divided into  three kinds of asymptotic regions:
    Solitonic region, $-6 <\xi\le-2 $; Solitonless region,  $\xi<-6 $ and  $\xi>-2 $;  Transition region, $\xi\approx -6 $.Here, $\xi:=x/t$.} \label{cone}
    \end{center}
\end{figure}

We  investigate  the  asymptotic stability and soliton resolution  for the mKdV equation  (\ref{mkdvequation}) for the region  $|x/t+4| <2$,
in which there are no  phase points on the real axis.
For the case of solitonless regions $|x/t+4|> 2$, it is considered in \cite{xzf}.
Based on the phase velocity $\xi$, apart from the critical line at $\xi = -6$, the other critical line should technically be at $\xi = 6$ (refere the Propostion \ref{relation of xi}). However, for the region where $x/t$ belongs to $[-2,6)$, we find that the set of solitons contributing to the asymptotic behavior is empty. This indicates that the region $[-2,6)$ can be seen as a special case of the solitonic region by setting the index $\Lambda$ defined in \eqref{def: Lambda} be empty,  and can also be classified as a solitonless region. Here, we consider the region $[-2, 6)$ as part of the solitonless region, which leads \br to \er the classification shown in Figure \ref{cone}. The only transient region in Figure \ref{cone} is near $\xi = -6$, which we have made a discussion in \cite{WXF2023}.

The soliton resolution conjecture is one of the most interesting phenomenon
observed in the study of solutions to certain nonlinear dispersive partial differential equations (PDEs). The conjecture suggests that solutions with generic initial data for many dispersive equations should eventually decompose into a finite number of solitons, each moving at different speeds, along with a radiative term \cite{RN14,RN15,RN16,RN17,RN18,RN19}. Understanding and proving soliton resolution contribute to our broader understanding of the behavior of nonlinear dispersive systems, shedding light on the intricate interplay between nonlinearities, dispersion, and soliton dynamics.
Recently, large-time asymptotics and  soliton resolution   for some integrable systems have  been
obtained  by using $\bar{\partial}$-generalization of the nonlinear steepest descent method
\cite{fNLS,Liu3,RN20,LJQ,YF1,YYLmch}.

This paper is organized as follows. In Section \ref{sec:pre}, we get down to the spectral analysis on the Lax pair.
We  state  the symmetries, asymptotic behaviors and time evolution of the scattering data. 
Further discussion show that the zeros of $a(z)$ are  simple and finite.
In Section \ref{sec:deform}, we set up an RH problem for a sectionally meromorphic function $m(z)$  comprised by the Jost solutions and the scattering data. 
Once the solution of the RH problem exists, we can directly obtain the reconstruction formula.
To handle the RH problem, we first give the distributions of phase points and the signature table of $\Re(2\i t\theta)$, then introduce a set of conjugations and interpolations, such that the original 
RH problem  becomes a standard RH problem.
Subsequently, according to the basic factorization of the jump matrix,
we introduce some  appropriate extensions to deform the jumps onto four different contours in the complex plane on which their forms are asymptotically small.
In Section \ref{sec:LTA},   
by  neglecting  the $\bar{\partial}$ term of $m^{(2)}(z)$, we get a conjugation of the RH problem related to the $N$-soliton with the modified scattering data.
In this way, we can consider the asymptotic behavior of  $N$-soliton solutions by using a small norm theorem.
The existence of $m^{(3)}(z)$ is verified, as well as its asymptotic estimate.
Finally, in Section \ref{sec:proofs}, we give the proofs of the main theorems applying the above consequences.

\paragraph{Notation.} We first provide some notations used in this paper:
\vspace{-0.1in}
\begin{itemize}
	\item $\mathbb{R}^+=(0,\infty)$, $\mathbb{C}^{\pm}=\{z\in\mathbb{C}:\pm\Im z>0\}$.
	\item The Japanese bracket is defined as $\langle x\rangle:=\sqrt{1+|x|^2}$.
	\item The normed space  $L^{p,s}(\mathbb{R})$ is defined with $\|q\|_{L^{p,s}(\mathbb{R})}\doteq\|\langle x\rangle^sq\|_{L^{p}(\mathbb{R})}$;
	$W^{k,p}(\mathbb{R})$ is defined with $\|q\|_{W^{k,p}(\mathbb{R})}$$\doteq\sum_{j=0}^k\|\partial^jq\|_{L^{p}(\mathbb{R})}$;
	$H^k(\mathbb{R})$  is defined with $\|q\|_{H^k(\mathbb{R})}\doteq\|\langle x\rangle^k\hat{q}\|_{L^2(\mathbb{R})}$, where $\hat{u}$ is the Fourier transform of $u$, and $H^{k,k}(\mathbb{R})\doteq L^{2,k}(\mathbb{R})\cap H^{k}(\mathbb{R})$.
	\item $\sigma_i (i=1, 2, 3)$ are the Pauli matrices defined as 
	\be
	\sigma_1=\begin{pmatrix}0&1\\
		1&0\end{pmatrix},\quad\, \sigma_2=\begin{pmatrix}0&-\i\\\i&0\end{pmatrix}\quad\, \sigma_3=\begin{pmatrix}1&0\\0&-1\end{pmatrix}.
	\ee
\end{itemize}

\paragraph{Main results.}The main results of the work are listed below.
\begin{theorem}\label{mainresult1}
	Suppose the initial data $q_0\mp1\in H^{4,4}(\mathbb{R}^{{\pm}})$ with scattering data $\left\{r(z),\{z_j,c_j\}_{j=0}^{N-1}\right\}$. 
	Order $z_j$ such that
	\begin{equation}\label{order}
		\Re z_0>\Re z_1>...>\Re z_{N-1}\geq0,    
	\end{equation}
	and define $\xi=\frac{x}{t}$. Let $q^{(sol),N}(x,t)$ be the $N$-soliton solution whose scattering data can be denoted by $\{\tilde{r}\equiv0,\{z_j,\tilde{c}_j\}_{j=0}^{N-1}\}$, where
	\begin{equation}
		\tilde{c}_j=c_j\exp\left(-\frac{1}{\i\pi}\int_{\mathbb{R}}\log(1-|r(s)|^2)\left(\frac{1}{s-z_j}-\frac{1}{2s}\right)ds\right).
	\end{equation} For fixed $\xi_0\in(0,2)$, there exist constants $t_0=t_0(q_0,\xi_0)$ and $C=C(q_0,\xi_0)$ such that the potential $q(x,t)$ of $(\ref{lax0})$ satisfies
	\begin{equation}
		|q(x,t)-q^{(sol),N}(x,t)|\leq ct^{-1},  \qquad t>t_0,\quad |\xi+4|\leq\xi_0.
	\end{equation}
	Furthermore, for $t>t_0$ and $|\xi+4|<\xi_0$, we have confirmed the soliton resolution for the $N$-soliton solution
	\begin{equation}\label{qse}
		q(x,t)=-1+\sum_{j=0}^{N-1}[sol(z_{j};x-x_j,t)+1]+\mathcal{O}(t^{-1}),
	\end{equation}
	where $sol(z_{j};x-x_j,t)$ is defined by (\ref{sol}), and
	\begin{equation}
		x_{j}=\frac{1}{2\Im z_{j}}\left\{\log\left(\frac{|c_{j}|}{\Im z_{j}}\prod_{k\in\triangle,k\neq j}\left|\frac{(z_{j}-z_k)(z_j+\bar{z}_k)}{(z_{j}z_{k}-1)(z_{j}\bar{z}_k+1)}\right|\right)-\frac{\Im z_{j}}{\pi}\int_{\mathbb{R}}\frac{\log(1-|r(s)|^2)}{|s-z_{j}|^2}ds\right\}.\nonumber
	\end{equation}
\end{theorem}

As a corollary of Theorem \ref{mainresult1}, we have the following theorem:
\begin{theorem}\label{mainresult2}
	Let $q^{(sol),M}(x,t)$ be an $M$-soliton satisfying the boundary conditions in ($\ref{boundary}$). Let $\left\{0,\{z_j,c_j\}_{j=0}^{M-1}\right\}$ denote its reflectionless scattering data. Then there exist $\varepsilon_0$ and $C>0$, for any initial data $q_0$ of problem \eqref{mkdvequation}-\eqref{boundary} satisfying
	\begin{equation}
		\varepsilon\doteq\|q_0-q^{(sol),M}(x,0)\|_{H^{4,4}(\mathbb{R})}<\varepsilon_0,
	\end{equation}
	$q_0$ generates scattering data $\{r',\{z_j',c'_j\}_{j=0}^{N-1}\}$ with $N\geq M$. For the two discrete spectrum we use the same order as in \eqref{order}.
	Suppose $M$ poles in the discrete spectrum of $q_0$ are close to that of $q^{(sol),M}(x,t)$. The remaining are close to $\pm1$. It is to say, there exists an index $L\in\{0,...,N-1\}$ with $L+M\leq N-1$, so that we have
	\begin{equation}\label{M1}
		\max_{0\leq j\leq M-1}(|z_j-z_{j+L}'|+|c_j-c_{j+L}'|)+\max_{j>M+L}|1+z_j'|+\max_{j<L}|1-z_{j}'|<C\varepsilon.
	\end{equation}
	Define $\xi=x/t$ and let $\xi_0\in(0,2)$ so that $\{\Re  z_j\}_{j=0}^{M-1}\subset[0,\frac{\xi_0}{2})$. Then we have constants $t_0(q_0,\xi_0)>0$, $C=C(q_0,\xi_0)>0$ and $\{x_{k+L}\}_{k=0}^{M-1}\subset\mathbb{R}$ such that for $t>t_0(q_0,\xi_0)$, $|\xi+4|<\xi_0$,
	\begin{equation}\label{qse2}
		\left|q(x,t)-\left[-1+\sum_{j=0}^{M-1}[sol(z_{j+L}';x-x_{j+L},t)+1]\right]\right|\leq Ct^{-1}.
	\end{equation}
\end{theorem}

\section{Preliminaries} \label{sec:pre}
In this section we provide an review of the results on the direct and inverse scattering problem for the mKdV equation \eqref{mkdvequation}.

In section \ref{subsec:Lax pair and Jost solutions}, we review the Lax pair of formulation 
of the mKdV and equation, and we present the properties of the Jost solutions. In section \ref{subsec:RH formulation}, we introduce the basic RH problem, which serves as the basis of the nonlinear steepest approach. In section \ref{subsec:signature table},  we plot the signature tables of the saddle functions so that we can open $\bar{\partial}$ lens in accordance with the corresponding decay regions as $t$ sufficiently large. 

\subsection{Lax pair and Jost solutions} \label{subsec:Lax pair and Jost solutions}

The defocusing mKdV equation \eqref{mkdvequation} is the compatibility condition of the following Lax pair 
\begin{equation}\label{lax0}
	\psi_x=X\psi,\quad\,  \psi_t =T\psi,
\end{equation}
where $\psi=\psi(\lambda;x,t)$ is a matrix-valued  eigenfunction,  $\lambda\in\mathbb{C}$ is the spectral  parameter, and
\be
	X=X(\lambda;x,t)=\i\lambda \sigma_3+Q,\qquad
	T=T(\lambda;x,t)=4\lambda^2 X-2\i\lambda\sigma_3(Q_x-Q^2)+2Q^3-Q_{xx},
\ee
with  $Q=\begin{pmatrix} 0&q(x,t)\\q(x,t)&0\end{pmatrix}$.
\paragraph{Existence and differentiability  of Jost functions.} 
Taking the non-zero boundary conditions \eqref{boundary},
we then get the asymptotic spectral problems
\begin{equation}
	\phi^{\pm}_x=X_\pm \phi^{\pm},\quad\,  \phi^{\pm}_t=T_{\pm} \phi^{\pm}, \label{lax5}
\end{equation}
with
$X_{\pm}(z;x)=\i\lambda\sigma_3+Q_{\pm}$, $T_{\pm}(z;x)=(4\lambda^2+2)X_{\pm}$, and $Q_{\pm}=\pm \sigma_1$.

The asymptotic eigenvector matrix is given by
\begin{align}
	Y_{\pm}(z)=I\mp \frac{1}{z}\sigma_2,
\end{align}
where $I$ denotes the $2\times2$ identity matrix, and $z$ is the uniformization variable defined as $z=\lambda+\zeta$, with  $\lambda(z)=\frac{1}{2}(z+z^{-1})$, and $\zeta(z)=\frac{1}{2}(z-z^{-1})$. For reference, note that $\det Y_{\pm}(z)=1-\frac{1}{z^2}$.

As usual, we define the  Jost eigenfunctions $ \psi^{\pm}(z;x)$ as the solutions of the scattering problem such that
$$  \psi^{\pm}(z;x)=Y_\pm(z)\\e^{\i\zeta(z)x\sigma_3}+o(1), \qquad x \rightarrow  \pm \infty.$$
Subsequently, the modified eigenfunctions $\mu^\pm(z;x)$ can be given by factorizing the asymptotic exponential oscillations:
\be
	 \mu^{\pm}(z;x) = \psi^{\pm}(z;x) \e^{-\i\zeta(z)x\sigma_3}. \label{trasform}
\ee
Furthermore, $\mu^{\pm}(z;x)$ can be defined by the following Volterra integral equations
\begin{equation}
	\mu^{\pm}(z;x)=\begin{cases}
		Y_{\pm}(z)+\int_{\pm\infty}^{x}Y_{\pm}(z)\e^{\i\zeta(z)(x-y)\hat{\sigma}_3}\big[Y_{\pm}^{-1}(z)\Delta Q_{\pm}(y)\mu_{\pm}(z;y)\big]dy,& {z\neq\pm 1,} \\
		Y_{\pm}(z)+\int_{\pm\infty}^{x}\big[I+(x-y)(Q_{\pm}\pm \i\sigma_3)\big]\Delta Q_{\pm}(y)\mu_{\pm}(z;y)dy,& {z=\pm 1,} \nonumber
	\end{cases}
\end{equation}
where $\Delta Q_{\pm}=Q-Q_\pm$.
Hereafter, we use $\mu_{i}^{\pm}(z;x)$ to denote the $i$-th column of $\mu^\pm(z;x)$.
Below  we just take a quick review on  some existing properties for  the Jost functions $\mu^{\pm}(z;x)$,
which can be shown in similar way to the reference  \cite{RN20}. 

\begin{proposition}\label{analydiff}
	Given $n\in\mathbb{N}_0$, let $q(x)\mp1\in$ $L^{1,n+1}(\mathbb{R}^\pm)$, $q'(x)\in$ $W^{1,1}(\mathbb{R})$.
	\begin{itemize}
		\item[$\blacktriangleright$]  For $z\in\mathbb{C}\setminus\{0\}$, $\mu_1^+(z;x)$ and $\mu_2^-(z;x)$ can be analytically extended to $\mathbb{C}^+$ and continuously extended to $\mathbb{C}^+\cup \mathbb{R}$; $\mu_1^-(z;x)$ and $\mu_2^+(z;x)$ can be analytically extended to $\mathbb{C}^-$ and continuously extended to $\mathbb{C}^-\cup \mathbb{R}$.
		\item[$\blacktriangleright$] The maps $q(x)\rightarrow$ $\frac{\partial^n}{\partial z^n}\mu^{\pm}_{i}(z)$ $(i=1,2)$ are Lipschitz continuous, specifically, for any $x_0\in\mathbb{R}$, $\mu_1^-(z)$ and $\mu_2^+(z)$ are continuously differentiable mappings:
		\begin{align}\label{21}
			&\partial_z^n\mu_1^-: \bar{\mathbb{C}}^-\setminus\{0\} \rightarrow L^{\infty}_{loc}\{\bar{\mathbb{C}}^-\setminus\{0\}, C^1((-\infty,x_0],\mathbb{C}^2)\cap W^{1,\infty}((-\infty,x_0],\mathbb{C}^2)\},\\
			&\partial_z^n\mu_2^+: \bar{\mathbb{C}}^-\setminus\{0\} \rightarrow L^{\infty}_{loc}\{\bar{\mathbb{C}}^-\setminus\{0\}, C^1([x_0,\infty),\mathbb{C}^2)\cap W^{1,\infty}([x_0,\infty),\mathbb{C}^2)\},
		\end{align}
		$\mu_1^+(z)$ and $\mu_2^-(z)$ are continuously differentiable mappings:
		\begin{align}\label{22}
			&\partial_z^n\mu_1^+: \bar{\mathbb{C}}^+\setminus\{0\} \rightarrow L^{\infty}_{loc}\{\bar{\mathbb{C}}^+\setminus\{0\}, C^1([x_0,\infty),\mathbb{C}^2)\cap W^{1,\infty}([x_0,\infty),\mathbb{C}^2)\}, \\
			&\partial_z^n\mu_2^+: \bar{\mathbb{C}}^+\setminus\{0\} \rightarrow L^{\infty}_{loc}\{\bar{\mathbb{C}}^+\setminus\{0\}, C^1((-\infty,x_0],\mathbb{C}^2)\cap W^{1,\infty}((-\infty,x_0],\mathbb{C}^2)\}.
		\end{align}
	\item[$\blacktriangleright$] For  the map   $q(x)\rightarrow \partial_z^n\mu_1^+(z)$, there exists an increasing function $ F_n(t)$, such that
	\begin{equation}
		|\partial_z^n\mu_1^+(z)|\leq F_n[(1+|x|)^{n+1}\| q-1\|_{L^{1,n+1}(x,\infty)}],\quad\, z\in \bar{\mathbb{C}}^+\setminus\{0\}.\nonumber
	\end{equation}
	Additionally, given potentials $q(x)$ and $\widetilde{q}(x)$  close enough,  we have
	\begin{equation}
		|\partial_z^n ( \mu_1^+(z)-\widetilde{\mu}_1^+(z) ) | \leq  \|q - \widetilde{q} \|_{L^{1,n+1}(x,\infty)} F_n[(1+|x|)^{n+1}\| q-1\|_{L^{1,n+1}(x,\infty)}].
	\end{equation}
	
	\item[$\blacktriangleright$] Let $S$ be a compact neighborhood of $\{-1,1\}$ in $\bar{\mathbb{C}}^+\setminus\{0\}$. Set $x^{\pm}=\max\{\pm x,0\}$, then there would be a constant $C$ so that
	\begin{equation}
		|\mu_1^+(z)-(1,z^{-1})^T|\leq C\langle x^-\rangle \e^{C\int_x^\infty \langle y-x\rangle|q-1|dy}\|q-1\|_{L^{1,1}}(x,\infty),\quad\,z\in S.
	\end{equation}
\end{itemize}
\end{proposition}

\paragraph{Asymptotics and symmetries  of the Jost functions.} 

The following proposition gives the asymptotic behaviour of the Jost functions.
\begin{proposition}
	Suppose that $q(x)\mp1\in$ $L^{1,n+1}(\mathbb{R}^\pm)$  and   $q'(x)\in$ $W^{1,1}(\mathbb{R})$. Then as $z\rightarrow\infty$,  we have the asymptotics on $\mathbb{C}^+$
	\begin{align}
		&\mu_1^+(z)=e_1+\frac{1}{z}\begin{pmatrix}
			-\i\int_{x}^{\infty}(q^2-1)dx\\
			-iq
		\end{pmatrix}+\mathcal{O}(z^{-2}),\nonumber\quad
		&\mu_2^-(z)=e_2+\frac{1}{z}\begin{pmatrix}
			iq\\
			\i\int_{-\infty}^{x}(q^2-1)dx
		\end{pmatrix}+\mathcal{O}(z^{-2}),\nonumber
	\end{align}
	and on $\mathbb{C}^-$
	\begin{align}
		&\mu_1^-(z)=e_1+\frac{1}{z}\begin{pmatrix}
			-\i\int_{-\infty}^{x}(q^2-1)dx\\
			-\i q
		\end{pmatrix}+\mathcal{O}(z^{-2}),\nonumber\quad
		&\mu_2^+(z)=e_2+\frac{1}{z}\begin{pmatrix}
			\i q\\
			\i\int_{x}^{\infty}(q^2-1)dx
		\end{pmatrix}+\mathcal{O}(z^{-2}).\nonumber
	\end{align}
For $z\in\mathbb{C}^+$,  we have the following asymptotics as $z\rightarrow 0$,
\begin{equation}
	\mu_1^+(z)=-\frac{\i}{z}e_2+\mathcal{O}(1),\quad\,\mu_2^-(z)=-\frac{\i}{z}e_1+\mathcal{O}(1);
\end{equation}
and for $z\in\mathbb{C}^-$,
\begin{equation}
	\mu_1^-(z)=\frac{\i}{z}e_2+\mathcal{O}(1),\quad\,\mu_2^+(z)=\frac{\i}{z}e_1+\mathcal{O}(1).
\end{equation}
\end{proposition}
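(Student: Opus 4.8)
\emph{Strategy.} I would establish the expansion for one representative Jost function and obtain the remaining seven statements from the discrete symmetries of the Jost solutions; the behaviour at $z=0$ will be deduced from the behaviour at $z=\infty$ by the involution $z\mapsto 1/z$ built into the uniformization.

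\emph{The limit $z\to\infty$.} I would work from the first-order system for the columns of $\mu^{\pm}$ coming from the $x$-part of the Lax pair: writing $\psi^{+}=\mu^{+}e^{it\theta\sigma_3}$ and using $\partial_x(t\theta)=\zeta$ gives $\mu^{+}_x=i\lambda\sigma_3\mu^{+}-i\zeta\,\mu^{+}\sigma_3+Q\mu^{+}$, so, since $\lambda-\zeta=z^{-1}$ and $\lambda+\zeta=z$, the entries of $\mu_1^{+}=(\mu_{11}^{+},\mu_{21}^{+})^{T}$ satisfy $\partial_x\mu_{11}^{+}=iz^{-1}\mu_{11}^{+}+q\,\mu_{21}^{+}$ and $\partial_x\mu_{21}^{+}=-iz\,\mu_{21}^{+}+q\,\mu_{11}^{+}$, with normalization $\mu_1^{+}\to Y_{+}e_1=(1,-iz^{-1})^{T}$ as $x\to+\infty$ read off from the Volterra equation. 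Inserting the asymptotic series $\mu_1^{+}=e_1+z^{-1}\mu^{(1)}(x)+z^{-2}\mu^{(2)}(x)+\cdots$ and collecting powers of $z$ yields a triangular recursion: the large coefficient $-iz$ in the second equation fixes the lower component at each order algebraically from lower-order data (at order $z^{0}$ one gets $\mu_{21}^{(1)}=-iq$, matching the statement), and the upper component is then recovered by integrating the first equation from $+\infty$, which converges thanks to the decay of $q-1$ (hence of $q^{2}-1$) at $+\infty$; this produces the stated expansion. The three cases $\mu_2^{-},\mu_1^{-},\mu_2^{+}$ are identical after replacing $Y_{+},+\infty$ by $Y_{\mp},\pm\infty$; alternatively, the reality symmetry $\mu_1^{\pm}(z)=\sigma_1\overline{\mu_2^{\pm}(\bar z)}$, which follows from $\sigma_1 X(\lambda)\sigma_1=\overline{X(\bar\lambda)}$ and uniqueness of Jost solutions, reduces the four statements to two.

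\emph{Remainder estimate and the main difficulty.} To upgrade the formal matching to the stated $\mathcal{O}(z^{-2})$ bound I would substitute $\mu_1^{+}=e_1+z^{-1}\mu^{(1)}+z^{-2}R$ into the Volterra integral equation; after one integration by parts against the decaying, differentiable potential, $R$ solves a Volterra equation whose forcing and kernel are controlled by the same Neumann-series estimates that underlie Proposition~\ref{analydiff}. This is precisely where the hypotheses $q\mp1\in L^{1,n+1}$ and $q'\in W^{1,1}$ are used, one extra moment/derivative buying the gain from $z^{-1}$ to $z^{-2}$, and one obtains $|R(z;x,t)|\le C$ locally uniformly in $x$, uniformly for $z$ bounded away from $0$ in the closed half-plane, and continuously up to $\Sigma$. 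I expect this last uniformity to be the real obstacle: the conjugating factor $e^{i\zeta(z)(x-y)\hat{\sigma}_3}$ gives no decay near the real axis, and none at all at $z=\pm1$ where $\zeta$ vanishes, so there each power of $z^{-1}$ must be extracted purely by integration by parts --- which forces one first to subtract the background column $Y_{\pm}e_j$ so that the remaining integrand carries the decaying factor $q\mp1$ --- and one must then check that all boundary terms and leftover integrals are genuinely of lower order rather than merely bounded.

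\emph{The limit $z\to0$.} Here I would avoid re-running the integral equation, where $Y_{\pm}^{\pm1}$ are singular, and instead use that $X(x,t;\lambda)$ depends on $z$ only through $\lambda(z)=\tfrac12(z+z^{-1})$, which is invariant under $z\mapsto1/z$. Hence $\psi^{\pm}(x,t;z)$ and $\psi^{\pm}(x,t;1/z)$ solve the same $x$-equation; comparing their normalizations through $Y_{\pm}$ --- using $Y_{\pm}(z)^{-1}Y_{\pm}(1/z)=\mp z\sigma_2$ and $\theta(1/z)=-\theta(z)$ --- and invoking uniqueness of Jost solutions yields $\mu^{\pm}(x,t;1/z)=\mp z\,\mu^{\pm}(x,t;z)\sigma_2$. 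Reading this columnwise, e.g. $\mu_1^{+}(w)=-\tfrac{i}{w}\,\mu_2^{+}(1/w)$ with $1/w\in\mathbb{C}^{-}$ whenever $w\in\mathbb{C}^{+}$, and feeding in the already-proved limit $\mu_2^{+}(\zeta)\to e_2$ as $\zeta\to\infty$ in $\bar{\mathbb{C}}^{-}$, gives $\mu_1^{+}(z)=-\tfrac{i}{z}e_2+\mathcal{O}(1)$ as $z\to0$; the three remaining near-zero statements follow in the same way, each from the corresponding $z\to\infty$ limit in the matching half-plane.
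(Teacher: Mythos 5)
The paper gives no proof of this proposition at all --- it is quoted as following ``in a similar way to'' \cite{RN20} --- so your argument has to stand on its own, and its architecture does: the scalar system you derive for the columns of $\mu^{\pm}$ is correct (using $\lambda-\zeta=z^{-1}$, $\lambda+\zeta=z$), the order-$z^{0}$ balance indeed forces $\mu_{21}^{+}\sim -iq/z$, the relation $Y_{\pm}(z)^{-1}Y_{\pm}(1/z)=\mp z\sigma_2$ together with $\theta(1/z)=-\theta(z)$ and uniqueness of Jost solutions does give $\mu^{\pm}(x,t;1/z)=\mp z\,\mu^{\pm}(x,t;z)\sigma_2$ (equivalently (\ref{symmetry3})), and feeding the already-established large-$z$ limits of the opposite columns through this involution yields exactly the four stated $z\to0$ limits, the $\mathcal{O}(1)$ error coming from the $\mathcal{O}(z^{-1})$ term at infinity. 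The remainder control you sketch (substitute the partial sum into the Volterra equation, integrate by parts, pay with the extra moments of $q\mp1$) is the same mechanism as in \cite{RN20}; it is only an outline, but you correctly locate where uniformity up to the real axis must be earned.

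One concrete point you glossed over: actually executing the order-$z^{-1}$ step of your own recursion gives $\partial_x\mu_{11}^{+,(1)}=i(1-q^{2})$ with $\mu_{11}^{+,(1)}(+\infty)=0$, hence $\mu_{11}^{+,(1)}(x)=+i\int_x^{\infty}(q^{2}-1)\,dy$, not the printed $-i\int_x^{\infty}(q^{2}-1)\,dx$; likewise the columns normalized at $x=-\infty$ must carry integrals $\int_{-\infty}^{x}$ rather than $\int_x^{\infty}$. Your computation, not the printed statement, is the consistent one: only with $+i\int_x^{\infty}$ for $\mu_1^{+}$ and $+i\int_{-\infty}^{x}$ for $\mu_2^{-}$ does $\det[\mu_1^{+},\mu_2^{-}]=(1-z^{-2})a(z)$ reproduce the paper's own asymptotics (\ref{asymptoticsfora1}), $\lim_{z\to\infty}z(a(z)-1)=i\int_{\mathbb{R}}(q^{2}-1)dx$. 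So the proposition as printed carries sign/integration-limit typos, and your sentence claiming the recursion ``produces the stated expansion'' asserts a match that does not literally hold; you should record the coefficients your recursion actually yields and note the discrepancy, rather than paper over it. With that correction, and with the remainder estimate fleshed out along the lines you indicate, the proof is complete; note also that the point $z=\pm1$ you worry about is a genuine issue for continuity of $\mu^{\pm}$ up to $\Sigma$, but it plays no role in the two limits $z\to\infty$ and $z\to0$ treated here.
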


Abel's theorem indicates that for any solution $\psi(z,x)$ of \eqref{lax0}, one has $\partial_x(\det \psi)=\partial_t (\det\psi)=0$. Thus, both $\psi_-$ and $\psi_+$ are two fundamental matrix solutions of the scattering problem with $\det\psi^{\pm}(z)=1-z^{-2}$, and satisfy the linear relation:
\begin{equation}
	\psi^+(z;x)=\psi^-(z;x)S(z), \quad\, z\in\mathbb{R}\setminus\{\pm1,0\},\label{sydd}
\end{equation}
where $S(z)$ is called scattering matrix and is represented as:
\begin{equation}
S(z)=\begin{pmatrix}
	a(z)&c(z)\\
	b(z)&d(z)
\end{pmatrix}.  \nonumber
\end{equation}
Hereafter, we use a bar to denote the complex conjugate. Then we can establish the following symmetries for the Jost functions and the scattering matrix, which will enable us to set up an RH problem with desirable symmetries. 
\begin{proposition}\label{symmetries}
	Suppose that $q(x)\mp1\in$ $L^{1,n+1}(\mathbb{R}^\pm)$ and   $q'(x)\in$ $W^{1,1}(\mathbb{R})$, then
	\begin{enumerate}
		
		\item For $z\in\bar{\mathbb{C}}^+\setminus\{0\}$, the Jost functions $\psi_j^{\pm}$ $(j=1,2)$ satisfy the symmetries
		\begin{align}
			&\psi_1^{\pm}(z)=\sigma_1\overline{\psi_2^{\pm}(\bar{z})}, \quad\,
			\psi_2^{\pm}(z)=\sigma_1\overline{\psi_1^{\pm}(\bar{z})}.\label{symmetry12}\\
			&\psi_1^{\pm}(z)=\overline{\psi_1^{\pm}(-\bar{z})}, \quad\, \psi_2^{\pm}(z)=\overline{\psi_2^{\pm}(-\bar{z})}.\\
			& \psi_1^{\pm}(z)=\mp\frac{i}{z}\psi_2^{\pm}(\frac{1}{z}), \quad\, \psi_2^{\pm}(z)=\pm\frac{i}{z}\psi_1^{\pm}(\frac{1}{z}). \label{symmetry3}
		\end{align}
		
		\item    The  scattering data  $a(z), b(z), c(z)$ and $  d(z) $ satisfy the symmetries
		\begin{align}
			&S(z)= \sigma_1 \overline{S( \bar z )}\sigma_1=  -\sigma_2S(z^{-1})\sigma_2, \quad\, S(z)=\overline{S(-\bar{z})}.\label{symmetry22}
		\end{align}
	\end{enumerate}
\end{proposition}
It then follows that the scattering matrix $S(z)$  can be rewritten as
\begin{equation}
	S(z)=\left(\begin{array}{cc}
		a(z)&\overline{b(z)}\\
		b(z)&\overline{a(z)}
	\end{array}\right),\quad\, z\in\mathbb{R}\setminus\{\pm1,0\}. \nonumber
\end{equation}

\paragraph{Scattering map from initial data to reflection coefficient}

The reflection coefficient that will be used in the inverse problem is defined by the scattering coefficients $a(z)$ and $b(z)$ 
\begin{equation}
	r(z)\doteq\frac{b(z)}{a(z)}.
\end{equation}
The following proposition provides some useful properties of $a(z)$ and $b(z)$.
\begin{proposition}\label{scattering properties}
	Let $q(x)\mp1\in$ $L^{1,n+1}(\mathbb{R}^\pm)$  and   $q'(x)\in$ $W^{1,1}(\mathbb{R})$,   then
	\begin{enumerate}
		\item The scattering coefficients can be expressed as
		\begin{equation}\label{scatteringdataexpression}
			a(z)=\frac{\Wr(\psi_1^+,\psi_2^-)}{1-z^{-2}},\quad\, b(z)=\frac{\Wr(\psi_1^-,\psi_1^+)}{1-z^{-2}}.
		\end{equation}
		Thus it follows from the analyticities of $\psi^\pm$ that $a(z)$ is analytic in $\mathbb{C}^+$ while $b(z)$ and $r(z)$ are defined on $\mathbb{R}\setminus\{\pm1,0\}$.
		\item For $z\in\mathbb{R}\setminus\{\pm1,0\}$, we have
		\begin{equation}
			|a(z)|^2-|b(z)|^2=1,\label{a>1} 
		\end{equation}
		which gives a constraint
		\begin{equation}\label{r<1}
			|r(z)|^2=1-\frac{1}{|a(z)|^2}<1.
		\end{equation}
		\item  The scattering data has the following asymptotics
		\begin{align}
			&\lim_{z\rightarrow\infty}(a(z)-1)z=\i\int_{\mathbb{R}}(q^2-1)dx, \ z\in\bar{\mathbb{C}}^+, \label{asymptoticsfora1}\\
			&\lim_{z\rightarrow0}(a(z)+1)z^{-1}=\i\int_{\mathbb{R}}(q^2-1)dx, \ z\in\bar{\mathbb{C}}^+\label{asymptoticsfora2}
		\end{align}
		\begin{align}
			&|b(z)|=\mathcal{O}(|z|^{-2}),\quad\, \text{as }|z|\rightarrow\infty,\ \ z\in\mathbb{R}\label{asymptoticsforb1}\\
			&|b(z)|=\mathcal{O}(|z|^{2}),\quad\, \text{as }|z|\rightarrow0, \ z\in\mathbb{R}.\label{asymptoticsforb2}
		\end{align}
		So that
		\begin{align}\label{asymptoticsofr}
			r(z)\sim z^{-2}, \quad\,|z|\rightarrow\infty;\qquad  r(z)\sim z^2, \quad\, |z|\rightarrow0.
		\end{align}
	\end{enumerate}
\end{proposition}

 It can also be shown that  $z=\pm1$ are the simple poles of  $a(z)$ and $b(z)$ on account of the expressions in (\ref{scatteringdataexpression}). Even though, we still can derive the boundness of $r(z)$ at $z=\pm1$. Using the symmetry in (\ref{symmetry3}), it is easy to verify that $\psi_1^-(\pm1)=\pm \i\psi_2^-(\pm1)$, which implies that
\begin{equation}\label{scatteringdata1}
	a(z)=\frac{\pm a_{\pm}}{z\mp1}+\mathcal{O}(1),\quad\, b(z)=-\frac{\i a_\pm}{z\mp1}+\mathcal{O}(1),
\end{equation}
where
\begin{equation}
	a_{\pm}=\frac{1}{2}\det[\psi_1^+(\pm1),\psi_2^-(\pm1)].\label{apm}
\end{equation}
Consequently, it is obvious that the reflection coefficient $r(z)$ is bounded at $z=\pm1$ and
\begin{equation}\label{r1}
	\lim_{z\rightarrow\pm1}r(z)=\mp \i.
\end{equation}
Although the scattering coefficients have some simple poles, given specific conditions on the initial potential, the reflection coefficient will exhibit smoothness and decay.
\begin{proposition}\label{rinH1}
	Suppose that $q(x)\mp1\in L^{1,2}(\mathbb{R}^\pm)$, $q'(x)\in W^{1,1}(\mathbb{R})$,  then $r(z)\in H^{1}(\mathbb{R})$.
\end{proposition}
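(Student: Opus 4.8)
\emph{Proof proposal.}\quad The plan is to establish $r\in L^{2}(\mathbb{R})$ and $r'\in L^{2}(\mathbb{R})$ by dissecting the real axis into the compact ``bulk'' part away from $\{0,\pm1\}$, small neighbourhoods of the branch points $z=\pm1$, a neighbourhood of $\infty$, and a neighbourhood of $0$, and controlling $r=b/a$ on each piece. The ingredients are: Proposition~\ref{analydiff} (continuity and $C^{1}$-dependence in $z$ of the Jost functions $\mu^{\pm}_{j}$ up to the real axis, together with the quantitative bounds on $\partial_{z}^{n}\mu^{\pm}_{j}$), the Wronskian representations \eqref{scatteringdataexpression}, the unimodularity relation $|a|^{2}-|b|^{2}=1$ of Proposition~\ref{scattering properties}, and the local data \eqref{scatteringdata1}, \eqref{r1}, \eqref{asymptoticsfora2}, \eqref{asymptoticsofr}. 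First, membership in $L^{2}\cap L^{\infty}$: by Proposition~\ref{scattering properties}(3) one has $|r(z)|<1$ on $\mathbb{R}\setminus\{\pm1,0\}$, while \eqref{r1} lets $r$ extend continuously to $z=\pm1$ with value $\mp i$, so $\|r\|_{L^{\infty}(\mathbb{R})}\le1$; since moreover $r(z)=\mathcal{O}(z^{-2})$ as $|z|\to\infty$ and $r$ is bounded near $0$ by \eqref{asymptoticsofr}, we get $r\in L^{2}(\mathbb{R})$.

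Next I would show that $r$ is $C^{1}$ (indeed locally Lipschitz) on all of $\mathbb{R}$, including the points $\pm1$. On $\mathbb{R}\setminus\{\pm1,0\}$ the coefficients $a,b$ are given by \eqref{scatteringdataexpression}, and Proposition~\ref{analydiff} gives $\mu^{\pm}_{j},\partial_{z}\mu^{\pm}_{j}$ continuous up to $\Sigma$, hence $a,b\in C^{1}(\mathbb{R}\setminus\{\pm1,0\})$; combined with $|a(z)|^{2}=1+|b(z)|^{2}\ge1$, with $a(0)=-1$ (from \eqref{asymptoticsfora2}), and with the fact that $a$ has only a simple pole at $\pm1$, this yields $\|1/a\|_{L^{\infty}(\mathbb{R})}\le1$ and $r\in C^{1}(\mathbb{R}\setminus\{\pm1,0\})$ with $r'=b'/a-b\,a'\,a^{-2}$. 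At $z=\pm1$ I would insert the Laurent data \eqref{scatteringdata1}: writing $a=\tfrac{\pm a_{\pm}}{z\mp1}+a_{\mathrm{reg}}$ and $b=\tfrac{i a_{\pm}}{z\mp1}+b_{\mathrm{reg}}$ with regular ($C^{1}$, by Proposition~\ref{analydiff}) parts $a_{\mathrm{reg}},b_{\mathrm{reg}}$, one gets
\[
r(z)=\frac{i a_{\pm}+(z\mp1)\,b_{\mathrm{reg}}(z)}{\pm a_{\pm}+(z\mp1)\,a_{\mathrm{reg}}(z)},
\]
which is $C^{1}$ in a neighbourhood of $\pm1$ when $a_{\pm}\ne0$ (the boundary value $\mp i$ being consistent with \eqref{r1}); when $a_{\pm}=0$ both poles are absent and $r$ is already $C^{1}$ there from the previous step. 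In particular $r'\in L^{\infty}$ on every compact subset of $\mathbb{R}$, so on the bulk piece $r'\in L^{2}$.

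Finally, the decay of $r'$ near $\infty$ and near $0$. Differentiating the Volterra equations and using the large-$z$ expansions of $\mu^{\pm}_{1}$ established above together with the bound on $\partial_{z}\mu^{\pm}_{1}$ from Proposition~\ref{analydiff}, the determinant cancellation $\det[\mu^{-}_{1},\mu^{+}_{1}]=\mathcal{O}(z^{-2})$ survives one $\partial_{z}$ and, with $a(z)=1+\mathcal{O}(z^{-1})$, gives $a'(z),b'(z)=\mathcal{O}(z^{-2})$, hence $r'(z)=\mathcal{O}(z^{-2})$ as $|z|\to\infty$; this is exactly the order for which the hypotheses $q\mp1\in L^{1,2}(\mathbb{R})$ and $q'\in W^{1,1}(\mathbb{R})$ make the expansions valid with differentiable remainders. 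For $z$ near $0$ I would exploit the inversion symmetry in \eqref{33}: for real $z$ it reads $a(z)=-\overline{a(1/z)}$, $b(z)=\overline{b(1/z)}$, so $r(z)=-\overline{r(1/z)}$ and $r'(z)=z^{-2}\,\overline{r'(1/z)}$, whence $\int_{|z|<1}|r'(z)|^{2}\,dz=\int_{|w|>1}|w|^{2}\,|r'(w)|^{2}\,dw<\infty$ by the $\mathcal{O}(w^{-2})$ bound just obtained. Summing the four contributions yields $r'\in L^{2}(\mathbb{R})$, and with the first step $r\in H^{1}(\mathbb{R})$.

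I expect the third step to be the main obstacle: one must track how the moment weight in $q\mp1\in L^{1,2}(\mathbb{R})$ and the regularity $q'\in W^{1,1}(\mathbb{R})$ propagate through the Volterra integral equations to $\partial_{z}\mu^{\pm}_{j}$ and yield a rate on $r'$ at infinity strong enough to survive the extra factor $|w|^{2}$ produced by the inversion $z\mapsto1/z$. By contrast, the branch points $z=\pm1$, while delicate a priori, are tamed cleanly by the explicit Laurent structure \eqref{scatteringdata1} and the value \eqref{r1}.
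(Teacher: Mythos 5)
Your architecture is the same as the paper's (bulk estimates away from $\{0,\pm1\}$ via Proposition~\ref{analydiff}, Laurent structure at $z=\pm1$, decay at infinity, inversion symmetry at the origin), and your treatment of the band edges when $a_{\pm}\neq0$ is essentially the paper's argument. The genuine gap sits exactly where the paper does its real work: the degenerate case $a_{\pm}=0$. You dismiss it with the remark that ``both poles are absent and $r$ is already $C^{1}$ there from the previous step'', but your previous step only gives $r\in C^{1}(\mathbb{R}\setminus\{\pm1,0\})$, with no uniformity as $z\to\pm1$. When $a_{+}=0$ the denominator Wronskian itself vanishes at the edge, since $a_{+}=\tfrac12\det[\psi_1^+(1),\psi_2^-(1)]$, so $r=\det[\psi_1^-,\psi_1^+]/\det[\psi_1^+,\psi_2^-]$ is a true $0/0$ at $z=1$, and the derivatives $a'$, $b'$ produced by the quotient rule from \eqref{scatteringdataexpression} are not obviously bounded there (the factor $1-z^{-2}$ vanishes as well). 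The paper resolves this by writing numerator and denominator as $\int_1^zF(s)ds$ and $\int_1^zG(s)ds$ and proving $G(1)=\partial_z\det[\psi_1^+,\psi_2^-]\big|_{z=1}=2a(1)\neq0$, which comes from the continuous extension of $|a|^2=1+|b|^2\geq1$; only this makes the edge zero simple and $r'$ bounded near $\pm1$. Your proposal contains no substitute for that step, and it cannot be bypassed: absence of the poles of $a$ and $b$ does not give differentiability of their ratio at $\pm1$. (Relatedly, the $C^1$ regularity of your ``regular parts'' $a_{\mathrm{reg}},b_{\mathrm{reg}}$ is not immediate from \eqref{scatteringdata1} alone; one has to pass through the integral representation $\det=\pm2a_{\pm}+\int_{\pm1}^z(\cdot)\,ds$ as the paper does, though in the nondegenerate case this is a cosmetic difference.)

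There is a second, self-acknowledged gap in your step three. Your handling of $z\sim0$ by $r(z)=-\overline{r(1/z)}$ requires $\int_{|w|>1/\delta}|w|^{2}|r'(w)|^{2}dw<\infty$, i.e.\ decay strictly better than $r'(w)=\mathcal{O}(\langle w\rangle^{-1})$; you assert $r'=\mathcal{O}(z^{-2})$ by ``differentiating the Volterra equations'' but do not establish it, and the paper never proves such a rate either. The paper avoids needing it: from the locally Lipschitz maps of Proposition~\ref{analydiff} into $W^{n,\infty}(I_{\delta_0})\cap H^{n}(I_{\delta_0})$ it gets $|\partial_z^{j}r(z)|\leq C_{\delta_0}\langle z\rangle^{-1}$ for $j=0,1$ on $I_{\delta_0}$, which already puts $r'$ in $L^{2}$ away from the origin, and then treats $z=0$ separately via the symmetry and the vanishing of $r$ at $0$. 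As written, your conclusion near the origin therefore rests on an unproven (and stronger than needed elsewhere) estimate, in addition to the missing $a_{\pm}=0$ analysis at the edges.
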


\begin{proof}
	Proposition  \ref{scattering properties} implies that $a(z)$ and $b(z)$ are continuous when $z\in\mathbb{R}\setminus\{\pm1,0\}$. Then $r(z)$ is continuous for $z\in\mathbb{R}\setminus\{\pm1,0\}$. From \eqref{asymptoticsofr} and \eqref{r1} we know that $r(z)$ is bounded in the small neighborhood of $\{\pm1,0\}$ and $r(z)\in L^1(\mathbb{R})\cap L^2(\mathbb{R})$.
	Here we just need to prove that $r'(z)\in L^2(\mathbb{R})$. For $\delta_0>0$ small, from Proposition \ref{analydiff}, the maps
	\begin{equation}
		q\to\det[\psi_1^+(z),\psi_2^-(z)]\quad\,\text{and}\quad\,q\rightarrow\det[\psi_1^-(z),\psi_1^+(z)]
	\end{equation}
	are locally Lipschitz maps from
	\begin{equation}\label{domain1}
		\{q:q'(x)\in W^{1,1}(\mathbb{R}) \text{ and } q\in L^{1,n+1}(\mathbb{R})\}\rightarrow W^{n,\infty}(\mathbb{R}\setminus(-\delta_0,\delta_0)) \text{ for }n\geq0.
	\end{equation}
	According to Proposition \ref{analydiff}, $q\rightarrow\psi_1^+(z,0)$ is a locally Lipschitz map, and this fact also holds for $q\rightarrow\psi_2^-(z,0)$ and $q\rightarrow\psi_1^-(z,0)$. Together with \eqref{asymptoticsfora1}-\eqref{asymptoticsforb2}, we derive that $q\rightarrow r(z)$ is a locally Lipschitz map with values in $W^{n,\infty}(I_{\delta_0})\cap H^n(I_{\delta_0})$,
	where $I_{\delta_0}\doteq\mathbb{R}\setminus((-\delta_0,\delta_0)\cup(1-\delta_0,1+\delta_0)\cup(-1-\delta_0,-1+\delta_0)).$
	Then fix $\delta_0>0$ small to make sure  that the three intervals dist$(z,\{\pm1\})\leq\delta_0$ and $|z|\leq\delta_0$ have no intersection. In the complement of the union
	\begin{equation}
		|\partial_z^jr(z)|\leq C_{\delta_0}\langle z\rangle^{-1}\text{ for }j=0,1.
	\end{equation}
	In the following step, we will just prove the boundedness of $r'(z)$ in the small neighborhood of $z=1$.
	Let $z\in U(1,\delta_0)$ be a neighborhood of $1$, then we have
	\begin{equation}
		r(z)=\frac{b(z)}{a(z)}=\frac{\det[\psi_1^-,\psi_1^+]}{\det[\psi_1^+,\psi_2^-]}=\frac{\int_1^z\partial_s\det[\psi_1^-(s),\psi_1^+(s)]ds-2ia_+}{\int_1^z\partial_s\det[\psi_1^+(s),\psi_1^-(s)]ds+2a_+},
	\end{equation}
	where $a_\pm$ is defined in (\ref{apm}).
	If $a_+\neq0$ then obviously $r'(z)$ exist and is bounded near $1$.
	If $a_+=0$, then $z=1$ is not a pole of the scattering data $a(z)$ and $b(z)$. Therefore, they are continuous at $z=1$, implying 
	\begin{equation}
r(z)=\frac{\int_1^z\partial_s\det[\psi_1^-(s),\psi_1^+(s)]ds}{\int_1^z\partial_s\det[\psi_1^+(s),\psi_1^-(s)]ds}.
	\end{equation}
	From (\ref{scatteringdataexpression}) we have that
	\begin{equation}\label{a2}
		(z^2-1)a(z)=z^2\det[\psi_1^+,\psi_2^-].
	\end{equation}
	While $a_+=0$ implies that $\det[\psi_1^+,\psi_2^-]|_{z=1}=0$, differentiating (\ref{a2}) at $z=1$ we get
	\begin{equation}
		2a(1)=\partial_z\det[\psi_1^+,\psi_2^-]|_{z=1}.
	\end{equation}
	With $|a(1)|^2=1+|b(1)|^2\geq1$, we have $\partial_z\det[\psi_1^+,\psi_2^-]|_{z=1}\neq0$. It then follows that the derivative $r'(z)$ is bounded in a neighborhood of $1$.
	
	The proof for the case $z=-1$ is just an analogue. For $z=0$ we need to use $r(z^{-1})=-\bar{r}(z)$ to illustrate that $r(z)\rightarrow0$. Consequently, $r'(z)\in L^2(\mathbb{R})$.
\end{proof}


The following proposition gives a bound for reflection coefficient, and the proof can be found    in  \cite{RN20}.
\begin{proposition}
	Suppose that $q(x)\mp1\in H^{2,2}(\mathbb{R}^\pm)$, then the reflection coefficient satisfies
	\begin{equation}
		\|\log(1-|r|^2)\|_{L^{p}(\mathbb{R})}<\infty \text{ for any } p\geq1.
	\end{equation}
\end{proposition}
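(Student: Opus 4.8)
The plan is to reduce the claim to an $L^p$ estimate for $\log|a|$ along the real axis and then to localize the only points where a singularity can occur. Since $1-|r(z)|^2=|a(z)|^{-2}$ for $z\in\mathbb R\setminus\{\pm1,0\}$ by \eqref{r<1}, and $|a(z)|^2=1+|b(z)|^2\ge1$ there by part~3 of Proposition~\ref{scattering properties}, one has
\[
\log\!\big(1-|r(z)|^2\big)=-2\log|a(z)|\le 0 .
\]
Because $a$ is continuous on $\mathbb R\setminus\{\pm1,0\}$ (Proposition~\ref{analydiff} together with \eqref{scatteringdataexpression}) and extends continuously to $z=0$ with $a(0)=-1$ by \eqref{asymptoticsfora2}, the function $\log(1-|r|^2)$ can fail to be locally bounded only at $z=\pm1$ and at $z=\infty$. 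I would therefore split $\mathbb R$ into $\{|z|\ge R\}$, two small punctured discs $D^{\pm}$ about $\pm1$, and the remaining compact set $K\subset\mathbb R\setminus\{\pm1\}$, and estimate the contribution to $\int_{\mathbb R}|\log(1-|r|^2)|^p$ on each.

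On $K$ the continuous function $|r|$ is strictly less than $1$, hence bounded away from $1$, so $\log(1-|r|^2)$ is bounded and its contribution is finite. Near $\infty$, \eqref{asymptoticsofr} gives $r(z)=\mathcal O(z^{-2})$, whence $\big|\log(1-|r(z)|^2)\big|\le 2|r(z)|^2=\mathcal O(|z|^{-4})$, which lies in $L^p(\{|z|\ge R\})$ for every $p\ge1$. It remains to treat $D^{+}$ (the case of $D^{-}$ being identical). Here I would invoke the residue expansion $a(z)=a_{+}/(z-1)+\mathcal O(1)$ from \eqref{scatteringdata1}. If $a_{+}\neq0$, then $|a(z)|=|a_{+}|\,|z-1|^{-1}\big(1+o(1)\big)$ as $z\to1$, so on $D^{+}$ small enough
\[
\big|\log(1-|r(z)|^2)\big|=2\big|\log|a(z)|\big|\le 2\big|\log|z-1|\big|+C ,
\]
and since $\int_{0}^{\delta}|\log t|^{p}\,dt<\infty$ for every finite $p$, this belongs to $L^{p}(D^{+})$. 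If instead $a_{+}=0$, then the numerator of $a$ in \eqref{scatteringdataexpression} vanishes at $z=1$, so $a$ (and likewise $b$) extends continuously there, $|a(1)|^2=1+|b(1)|^2\ge1$, and $\log|a|$ stays bounded near $1$.

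Adding the four contributions gives $\|\log(1-|r|^2)\|_{L^{p}(\mathbb R)}<\infty$ for every $p\in[1,\infty)$. I expect the only delicate point to be the analysis on $D^{\pm}$: there $a$ generically has a simple pole while $r$ remains bounded with $|r(\pm1)|=1$, so $1-|r|^2$ vanishes, and one must read off from \eqref{scatteringdata1} that it vanishes only linearly, which turns $\log(1-|r|^2)$ into a logarithmic, and hence $L^{p}$ for every finite $p$, singularity; the exceptional configuration $a_\pm=0$ has to be recorded separately but causes no trouble. The remaining estimates, on $K$ and near $\infty$, are routine.
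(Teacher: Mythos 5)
Your overall strategy is sound, and in fact it reconstructs an argument the paper never gives: for this proposition the paper only points to Cuccagna--Jenkins \cite{RN20}, where the proof is the same localization you use --- write $1-|r|^2=|a|^{-2}$ with $|a|\ge 1$ from (\ref{r<1}), get boundedness of $\log(1-|r|^2)$ on compact sets away from $\pm1$, decay $\mathcal{O}(|z|^{-4})$ at infinity from (\ref{asymptoticsofr}), and an at-worst logarithmic singularity at $z=\pm1$ coming from the at-worst simple pole of $a$ there. So the decomposition into $K$, neighborhoods of $\pm1$, and $|z|\ge R$ is exactly the right reduction.

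The one step that does not hold up as written is the degenerate subcase $a_{+}=0$. Under the stated hypothesis $q_0\mp1\in H^{2,2}$ one only gets $q_0\mp1\in L^{1,1}(\mathbb{R})$ (the paper's remark reserves $L^{1,2}$ for $k=3$), so Proposition \ref{analydiff} furnishes continuity, not differentiability, of the Jost functions at $z=\pm1$; consequently the vanishing of the continuous numerator $\det[\psi_1^+,\psi_2^-]$ at the simple zero of $1-z^{-2}$ in (\ref{scatteringdataexpression}) does not by itself imply that $a$ (or $b$) extends continuously to $z=1$, which is what your "$\log|a|$ stays bounded" conclusion relies on. The same caveat applies to using the full expansion (\ref{scatteringdata1}) with an $\mathcal{O}(1)$ remainder. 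The fix is that the dichotomy is unnecessary: continuity of the Jost functions on a neighborhood of $\pm1$ gives the one-sided bound $|a(z)|\le C|z\mp1|^{-1}$ there (bounded numerator over a denominator with a simple zero), while $|a(z)|^2=1+|b(z)|^2\ge1$ gives the lower bound, so in \emph{all} cases
\begin{equation*}
0\le \log|a(z)|\le \bigl|\log|z\mp1|\bigr|+C'\quad\text{near } z=\pm1,
\end{equation*}
which is in $L^{p}$ for every finite $p$. Replacing your case analysis by this two-sided estimate closes the gap and leaves the rest of your proof (the bounds on $K$ and for $|z|\ge R$) intact.
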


Now we introduce the discrete spectrum. Suppose that $a(z)$ has finite $N$ simple zeros $ z_k$ $(k=0,\cdots, N-1)$ on $D_1\doteq\{z\in\mathbb{C}^+:\Im z>0,\Re z\geq0\}$, then symmetries (\ref{symmetry22}) implies 
that the discrete spectrum appears in double pairs
$\mathcal{Z}=\{z_k,\bar{z}_k,-\bar{z}_k,-z_k\}_{k=0}^{N-1}.$ When $z_k=-\bar{z}_k$,
the corresponding quaternate zeros $\{z_k,\bar{z}_k,-\bar{z}_k,-z_k\}$ degenerate into a pair  $\{-i, i\}$, which  corresponds  to solitons.
We further classify these  discrete spectrum as
\begin{equation}
	\mathcal{Z}_1\doteq\{z_k\}_{k=0}^{N-1}, \ \ \mathcal{Z}_2\doteq\{-\bar{z}_k\}_{k=0}^{N-1}, \ \  \mathcal{Z}^+\doteq\mathcal{Z}_1\cup\mathcal{Z}_2, \ \ \mathcal{Z}\doteq\mathcal{Z}^+\cup\overline{\mathcal{Z}^+}, \label{spcset}
\end{equation}
where   $\overline{\mathcal{Z}^+}$ formed by the complex conjugates of $\mathcal{Z}^+$.  The discrete spectrum can be seen in Figure  $\ref{discrete spectrum}$.

\begin{figure}
	\centering
{\includegraphics[width=0.35\linewidth]{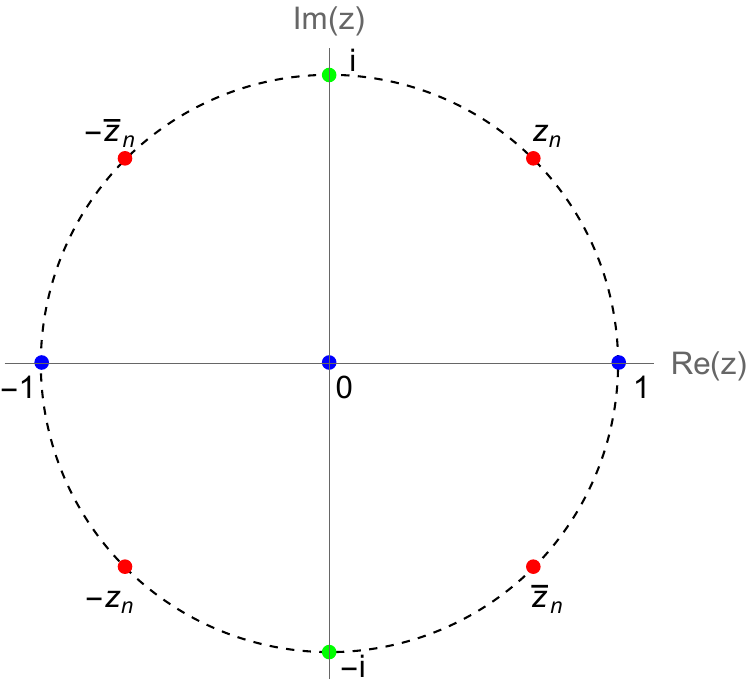}}
	 \caption{\small Distribution of   the discrete spectrum on circle  $|z|=1$: the jump contour $ \mathbb{R} $,   the red dots  (\textcolor{red}{$\centerdot$ }) and green dots (\textcolor{green}{$\centerdot$ }) represent the zeros of $a(z)$,
		they are corresponding  to breathers and solitons respectively. The blue  dots  (\textcolor{blue}{$\centerdot$}) represent singularities.  }\label{discrete spectrum}
	\end{figure}
Next we will state some properties of the zeros of $a(z)$.
From the symmetries of $\psi^{\pm}(z)$ we know that there exists a constant $\gamma_k\in\mathbb{C}$ such that
\begin{equation}\label{gamma}
	\psi_1^+(z_k)=\gamma_k\psi_2^-(z_k),\hspace{0.5cm}\psi_2^+(z_k)=\bar{\gamma}_k\psi_1^-(z_k),\hspace{0.5cm}\psi_1^+(z_k)=\gamma_k\psi_2^-(-\bar{z}_k),
\end{equation}
where  $\gamma_k$ is  known as the connection coefficient related to $z_k$.
\begin{proposition}
	Suppose $q(x)\mp1\in L^{1,2}(\mathbb{R}^\pm)$, then the discrete eigenvalues are simple, finite and distribute on the circle $|z|=1$.
\end{proposition}
\begin{proof}
	Let $z_k\in D_1$ be a zero of $a(z)$.
	It is well known that the Dirac operator for defocusing case is self-adjoint, meaning that the spectral parameter $\lambda(z_k)$ is real, which leads to the fact that
	\begin{equation}
		\Im\lambda(z_k)=\frac{|z_k|^2-1}{2|z_k|^2}\Im z_k=0.\label{iml}
	\end{equation}
Therefore,  all the zeros of $a(z)$ are distributed on the real axis and the unit circle. Next we prove the zeros are not real. 
First we note that $z_k\in\Real\setminus\{0,\pm 1\}$ cannot be a zero of $a(z)$, as it would contradict \eqref{a>1}. We will then show that $0$ and $\pm 1$ cannot be zeros either. 
The functions
	\begin{equation}
		f_k(\theta)=\Big|\partial_{\theta}^k\det[\psi_1^+(\e^{i\theta}),\psi_2^-(\e^{i\theta})]\Big|,\hspace{0.5cm}k=0,1
	\end{equation}
	are continuous for $\theta\in[0,\pi/2]$. Assume that $z=1$ is an accumulation point, according to the Bolzano-Weierstrass theorem, there exist sequences $\theta_j^{(k)}$, $k=0,1$, satisfying $\lim_{j\rightarrow\infty}\theta_j^{(k)}=0$ and $f_k(\theta_j^{(k)})=0$ for each $j$. From \eqref{scatteringdata1}, $a(z)=o(1)$ as $z\to1$. This just contradicts the fact in (\ref{a>1}). The proof is the same when $z=-1$ is an accumulation point. 
	In brief, all the zeros of $a(z)$ are distributed on the unit circle.
	
	From Proposition \ref{symmetries}, we have the symmetries of $\psi^{\pm}$ at $z_k$:
	\begin{align*}
		\bar{\psi}_1^+(\bar{z}_k^{-1})=\bar{\gamma}_k\bar{\psi}_2^-(\bar{z}_k^{-1}), \quad \sigma_1\psi_2^+(z_k^{-1})=\bar{\gamma}_k\sigma_1\psi_1^-(z_k^{-1}),\quad \psi_1^+(z_k)=\bar{\gamma}_k\psi_2^-(z_k),
	\end{align*}
	which imply that $\gamma_k=\bar{\gamma}_k$, i.e., $\gamma_k\in\mathbb{R}$.
	The condition $q(x)\mp1\in L^{1,1}(\mathbb{R}^{\pm})$ guarantees the existence of  $\frac{\partial a}{\partial\lambda}$. Evaluating the differentiation of \eqref{scatteringdataexpression} at $z_k$, we have
	\begin{equation}
		\frac{\partial a}{\partial\lambda}\Big|_{z=z_k}={\frac{\det[\partial_{\lambda}\psi_1^+,\psi_2^-]+\det[\psi_1^+,\partial_{\lambda}\psi_2^-]}{1-z^{-2}}}\Big|_{z=z_k}.\label{p1}
	\end{equation}
	Differentiating the two functions appear in the numerator of \eqref{p1} with respect to $x$ and using the scattering problem \eqref{lax0} we obtain that
	\begin{align*}
		\frac{\partial}{\partial x}\det[\partial_\lambda\psi_1^+,\psi_2^-]=\det[X_{\lambda}\psi_1^+,\psi_2^-]+\det[X\partial_{\lambda}\psi_1^+,\psi_2^-]+\det[\partial_{\lambda}\psi_1^+,X\psi_2^-]=\i\det[\sigma_3\psi_1^+,\psi_2^-],
	\end{align*}
	and
	\begin{align*}
		\frac{\partial}{\partial x}\det[\psi_1^+,\partial_\lambda\psi_2^-]=\det[\psi_1^+,X_{\lambda}\psi_2^-]+\det[X\psi_1^+,\partial_{\lambda}\psi_2^-]+\det[\psi_1^+,X\partial_{\lambda}\psi_2^-]=\i\det[\psi_1^+,\sigma_3\psi_2^-].
	\end{align*}
	According to (\ref{gamma}) and the decaying properties for each column at $z_k$, we can easily get that
	\begin{align*}
		&\det[\partial_{\lambda}\psi_1^+,\psi_2^-]=\i\gamma_k\int_{-\infty}^x\det[\sigma_3\psi_2^-(z_k,s),\psi_2^-(z_k,s)]ds,\\
		&\det[\psi_1^+,\partial_{\lambda}\psi_2^-]=\i\gamma_k\int_{x}^{\infty}\det[\sigma_3\psi_2^-(z_k,s),\psi_2^-(z_k,s)]ds.
	\end{align*}
	The symmetries of $\psi^\pm$ give us  $\psi_2^-(z_k)=-\i z_k^{-1}\sigma_1\overline{\psi_2^-(z_k)}$, which implies
	\begin{equation}
		\frac{\partial a}{\partial\lambda}\Big|_{z=z_k}=\frac{\gamma_k}{2\zeta(z_k)}\int_{\mathbb{R}}|\psi_2^-(z_k)|^2ds.
	\end{equation}
	Since  $\gamma_k$ is real and $\zeta(z_k)$ is imaginary, we conclude that $\frac{\partial a}{\partial\lambda}\big|_{z=z_k}\in \i\mathbb{R}$. It follows that the zeros of $a(z)$ are simple.
\end{proof}
Now, we can define the trace formula
\begin{equation}\label{traceformula}
	a(z)=\left(\frac{z-\i}{z+\i}\right)\prod_{n=0}^{N-1}\frac{(z-z_n)(z+\bar{z}_n)}{(z-\bar{z}_n)(z+z_n)}\exp\left(-\frac{1}{2\pi \i}\int_{\mathbb{R}}\frac{\log(1-|r(s)|^2)}{s-z}\d s\right)\,.
\end{equation}
\begin{remark}
In the trace formula, we claim that $\pm\i$ are always present for this paper. 
To show this, consider the function $a(z)=\left(\frac{z-\i}{z+\i}\right)^\delta\prod_{n=0}^{N-1}\frac{(z-z_n)(z+\bar{z}_n)}{(z-\bar{z}_n)(z+z_n)}\exp\left(-\frac{1}{2\pi \i}\int_{\mathbb{R}}\frac{\log(1-|r(s)|^2)}{s-z}\d s\right)$, where $\delta=1$ indicates that $\pm\i$ are zeros of $a(z)$ and $\delta=0$ indicates otherwise. From the asymptotic behavior \eqref{asymptoticsfora2}, we have that $\lim_{z\to 0} a(z)=-1$, which leads to
\begin{equation*}
a(0)=-1=(-1)^\delta\exp\left(-\frac{1}{2\pi \i}\int_{\mathbb{R}}\frac{\log(1-|r(s)|^2)}{s}\d s\right)\,.
\end{equation*}
Using the symmetries \eqref{symmetry22}, which implies $|r(z)|^2=|r(-z)|^2$, we have that
\begin{equation*}
\int_{-\infty}^0\frac{\log(1-|r(s)|^2)}{s}\d s=\int_{+\infty}^0\frac{\log(1-|r(s)|^2)}{s}\d s\,.
\end{equation*}
This simplifies the equation for $a(0)$ to $-1=(-1)^\delta$.
Therefore, one has $\delta=1$, meaning that $\pm\i$ are indeed zeros of $a(z)$. However, $\pm\i$ are not always the zeros of $a(z)$ in the generic case. As shown in \cite{RN12} that if $q_+/q_-=1$, then $\pm\i$ are no longer zeros of $a(z)$.
\end{remark}

At the end of this section, we demonstrate the time evolution of the scattering data  $a(z)$,  $b(z)$ and $r(z)$. Define $\psi^\pm(z;x,t)\doteq\psi^\pm(z;x)H(z;t)$ as the the time-dependent eigenfunctions, where $H(z;t)$ is a function to be determined.
Substituting  $\psi^\pm(z;x,t)$  into   \eqref{lax0} leads to
\begin{equation}
	( \partial_t-T)[\psi^\pm(z;x )H(z;t)]=0. \label{t1}
\end{equation}
As $x\rightarrow\pm\infty$, (\ref{t1}) gives $H(z;t)=\e^{\i\zeta(z)(4\lambda^2+2)t\sigma_3}.$
Differentiating   (\ref{sydd}) with respect to $t$ and using (\ref{t1}),  we derive an ODE that the scattering matrix satisfies
\begin{equation}
	\partial_tS=\i\zeta(z)(4\lambda^2(z)+2)[\sigma_3,S],
\end{equation}
which indicates that
\begin{align}
	&a(z,t)=a(z,0),\quad b(z,t)=b(z,0)\e^{\i\zeta(4\lambda^2+2)t},\quad r(z,t)=r(z,0)\e^{\i\zeta(4\lambda^2+2)t};\nonumber\\
	&z_k(t)=z_k(0),\quad \gamma_k(t)=\gamma_k(0)\e^{i\zeta(4\lambda^2+2)t}.\nonumber
\end{align}
Hereafter, for convenience, we still use $a(z)$, $b(z)$ and $r(z)$ to denote $a(z,t)$, $b(z,t)$ and $r(z,t)$, respectively.

\subsection{An RH characterization of the mKdV equation}\label{subsec:RH formulation}
Now we introduce the sectionally meromorphic function
\begin{equation}
	m(z)=m(z;x,t)=\left\{\begin{aligned}
		&\left(\frac{\mu_1^+(z;x,t)}{a(z)}, \mu_2^-(z;x,t)\right),& &z\in\mathbb{C}^+,\\
		&\left(\mu_1^-(z;x,t), \frac{\mu_2^+(z;x,t)}{\overline{a(\bar{z})}}\right),& &z\in\mathbb{C}^-,
	\end{aligned}\right.
\end{equation}
then the following matrix RH problem is proposed.

\begin{RHP}\label{rhpm}
	Find a $2\times2$ matrix-valued function $m(z;x,t)$ such that
	\begin{enumerate}
		\item $m(z)$ is meromorphic for $z\in\mathbb{C}\setminus\mathbb{R}$, with poles belonging to the set $\mathcal{Z}$ defined in \eqref{spcset}.
		\item  $m(z)$ satisfies the following symmetries
		\begin{align}
			&m(z)=\sigma_1\overline{m(\bar{z})}\sigma_1 =z^{-1} m(z^{-1})\sigma_2=\overline{m(-\bar{z})}.\label{symmetrym}
		\end{align}
		\item $m(z)$ has the following asymptotics
		\begin{align*}
			m(z;x,t)=I+\mathcal{O}(z^{-1}),\quad z\to\infty;\qquad
			zm(z;x,t)=\sigma_2+\mathcal{O}(z),\quad z\to 0.
		\end{align*}
		\item $m_{\pm}(z;x,t)=\lim\limits_{\mathbb{C}^{\pm}\ni z'\rightarrow z}m(z';x,t)$ exist for any $z\in\mathbb{R}\setminus\{0\}$ and meet the jump relation $m_+(z;x,t)=m_-(z;x,t)V(z)$, where
		\begin{equation}
			V(z)=\begin{pmatrix}
				1-|r(z)|^2 & -\overline{r(z)}\e^{2\i t\theta(z)}\\
				r(z)\e^{-2\i t\theta(z)} & 1
			\end{pmatrix},
		\end{equation}
		and
		\begin{equation}
			\theta(z)=\zeta(z)\{x/t+4\lambda^2(z)+2\}.\label{theta}
		\end{equation}
		\item $m(z;x,t)$ has residue conditions at the simple poles in $\mathcal{Z}=\mathcal{Z}^+\cup\overline{\mathcal{Z}^+}$
		\begin{equation}\label{resm}
			\begin{split}
				\Res_{z=z_k}m(z;x,t)=\lim_{z\rightarrow z_k}m(z;x,t)\begin{pmatrix}
					0 & 0\\
					c_k(x,t) & 0
				\end{pmatrix},\quad
				\Res_{z=\bar{z}_k}m(z;x,t)=\lim_{z\rightarrow \bar{z}_k}m(z;x,t)\begin{pmatrix}
					0 & \bar{c}_k(x,t)\\
					0 & 0
				\end{pmatrix},\\
				\Res_{z=-\bar{z}_k}m(z;x,t)=\lim_{z\rightarrow -\bar{z}_k}m(z;x,t)\begin{pmatrix}
					 0 & 0\\
					-\bar{c}_k(x,t) & 0
			\end{pmatrix},\quad 
				\Res_{z=-z_k}m(z;x,t)=\lim_{z\rightarrow -z_k}m(z;x,t)\begin{pmatrix}
					0 & -c_k(x,t)\\
					0 & 0
				\end{pmatrix}.
			\end{split}
		\end{equation}
		where
		\be
				c_k(x,t)=\frac{\gamma_k(0)}{a'(z_k)}\e^{-2\i t\theta(z_k)}\doteq c_k\e^{-2\i t\theta(z_k)},\quad
				c_k=\frac{\gamma_k(0)}{a'(z_k)}=\frac{2z_k}{\int_{\mathbb{R}}|\psi_2^+(z_k;x,0)|^2dx}=z_k|c_k|.
		\ee
	\end{enumerate}
\end{RHP}
\begin{remark}
The solution of  RH Problem \ref{rhpm} preserves the symmetry from $z$ to $-z$, ensuring that the potential recovered from this solution is real, consistent with the fact that the solution of the mKdV equation is real.
\end{remark}
It then follows the reconstruction formula
\begin{equation}
	q(x,t)=\lim_{z\rightarrow\infty}\i zm_{21}(z;x,t).
\end{equation}


\subsection{Signature tables}\label{subsec:signature table}
Large-time asymptotic behavior of  RH problem  \ref{rhpm}  is influenced by  decay/growth of   oscillatory terms  $\e^{\pm 2\i t\theta(z)}$ and  phase points of  $\theta(z)$.  Let $\xi=x/t$, direct calculation  gives
$$  \theta'(z)=\frac{3}{2}z^2+\frac{\xi+3}{2z^2}+\frac{3}{2z^4}+\frac{\xi+3}{2}, $$
from which we can get six phase points. 
Moreover, he decay/growth of   oscillatory terms  $\e^{\pm 2\i t\theta(z)}$  is determined by  the sign of  $\Re(2\i t \theta(z))$.
The decaying region of  $\Re (2\i t \theta(z))$ is shown in Figure \ref{figtheta}. 
\begin{proposition}[Distribution of saddle points]\label{relation of xi}
    In addition to the two fixed saddle points $\pm\i$, there exist four saddle points satisfying the following properties for
    different $\xi$ (see Figure \ref{figsaddle}):\\
    - For $\xi<-6$, the four saddle points $\xi_j:=\xi_j(\xi)$, $j=1,2,3,4$ lie on the jump contour $\Sigma=\mathbb{R}\backslash\{0\}$.
        Moreover, we have $\xi_4<-1<\xi_3<0<\xi_2<1<\xi_1$ and $\xi_1=\frac{1}{\xi_2}=-\frac{1}{\xi_3}=-\xi_{4}$;\\
    - For $-6<\xi<6$, the four saddle points are away from the coordinate axes;\\
    - For $\xi>6$, the four saddle points lie on the imaginary axis with $\Im\xi_1>1>\Im\xi_2>0>\Im\xi_3>-1>\Im\xi_4$ and $\xi_1\xi_2=\xi_3\xi_4=-1$.
\end{proposition}
\begin{proof}
    Letting $\theta'(z)=0$ and factoring the left-hand side gives us
    \begin{equation}
       (1+z^2)\left(3z^4+\xi z^2+3\right)=0.
    \end{equation}
    Solving for $z$, we have $z=\pm\i$ and 
    \begin{equation}
        z^2=-\frac{\xi+\sqrt{\xi^2-36}}{6}, \quad {\rm or}\quad  z^2=-\frac{\xi-\sqrt{\xi^2-36}}{6}.
    \end{equation}
    For $\xi<-6$, we have $-\frac{\xi\pm\sqrt{\xi^2-36}}{6}>0$,
    and the four roots are as follows.
    \begin{align}
    \xi_1=\sqrt{-\frac{\xi-\sqrt{\xi^2-36}}{6}}, \quad \xi_4=-\sqrt{-\frac{\xi-\sqrt{\xi^2-36}}{6}}, \label{saddlexi1,4}\\
    \xi_2=\sqrt{-\frac{\xi+\sqrt{\xi^2-36}}{6}}, \quad \xi_3=-\sqrt{-\frac{\xi+\sqrt{\xi^2-36}}{6}}.\label{saddlexi2,3}
    \end{align}
    with  $\xi_4<-1<\xi_3<0<\xi_2<1<\xi_1$ and $\xi_1=\frac{1}{\xi_2}=-\frac{1}{\xi_3}=-\xi_{4}$.

    For $-6<\xi<6$, the discriminant $\xi^2-36$ is less than zero. Therefore, there exist four saddle points
    $\xi_j=\Re(\xi_j)+\i\Im(\xi_j)$, where $\Re(\xi_j), \Im(\xi_j)\neq 0$, $j=1,2,3,4$.

    For $\xi>6$, we have $-\frac{\xi\pm\sqrt{\xi^2-36}}{6}<0$, and the
    four pure imaginary saddle points are as follows
    \begin{align}
    \xi_1=\i\sqrt{\frac{\xi+\sqrt{\xi^2-36}}{6}}, \quad \xi_4=-\i\sqrt{\frac{\xi+\sqrt{\xi^2-36}}{6}}, \\
    \xi_2=\i\sqrt{\frac{\xi-\sqrt{\xi^2-36}}{6}}, \quad \xi_3=-\i\sqrt{\frac{\xi-\sqrt{\xi^2-36}}{6}},
    \end{align}
    with $\Im\xi_1>1>\Im\xi_2>0>\Im\xi_3>-1>\Im\xi_4$ and $\xi_1\xi_2=\xi_3\xi_4=-1$.
\end{proof}

According to the Figure \ref{figsaddle} and Figure \ref{figtheta}, it could be observed the following:
\begin{itemize}
\item For $\xi<-6$, there are four stationary phase points in addition to $i,-i$, all
of which are located in the jump contour as shown in Figure \ref{xiaoyu}, with the corresponding signature table in Figure \ref{figxi4sp}.
\item For $-6<\xi<-2$, The distribution of phase points is shown in Figure \ref{jieyu}, with the signature table shown in Figure \ref{figxi0sp}.
\item For $\xi>-2$, there exist again four stationary phase points besides $i,-i$.
    \begin{itemize}
    \item When $-2<\xi<6$, the four saddle points are away from the coordinate axis (both real and imaginary axis), corresponding to Figure \ref{jieyu}, with the signature table shown in Figure \ref{figxi00sp}. The asymptotic analysis for $-2<\xi<6$ could be seen as a specific case of the analysis for $-6<\xi<-2$.
    \item For $\xi>6$, the four saddle points are all distributed on the imaginary axis as shown in Figure \ref{dayu}, and the signature table is still shown in Figure \ref{figxi00sp}.
    \end{itemize}
\end{itemize}

\begin{figure}[htbp]
	\centering
	\subfloat[]{\includegraphics[width=0.25\linewidth]{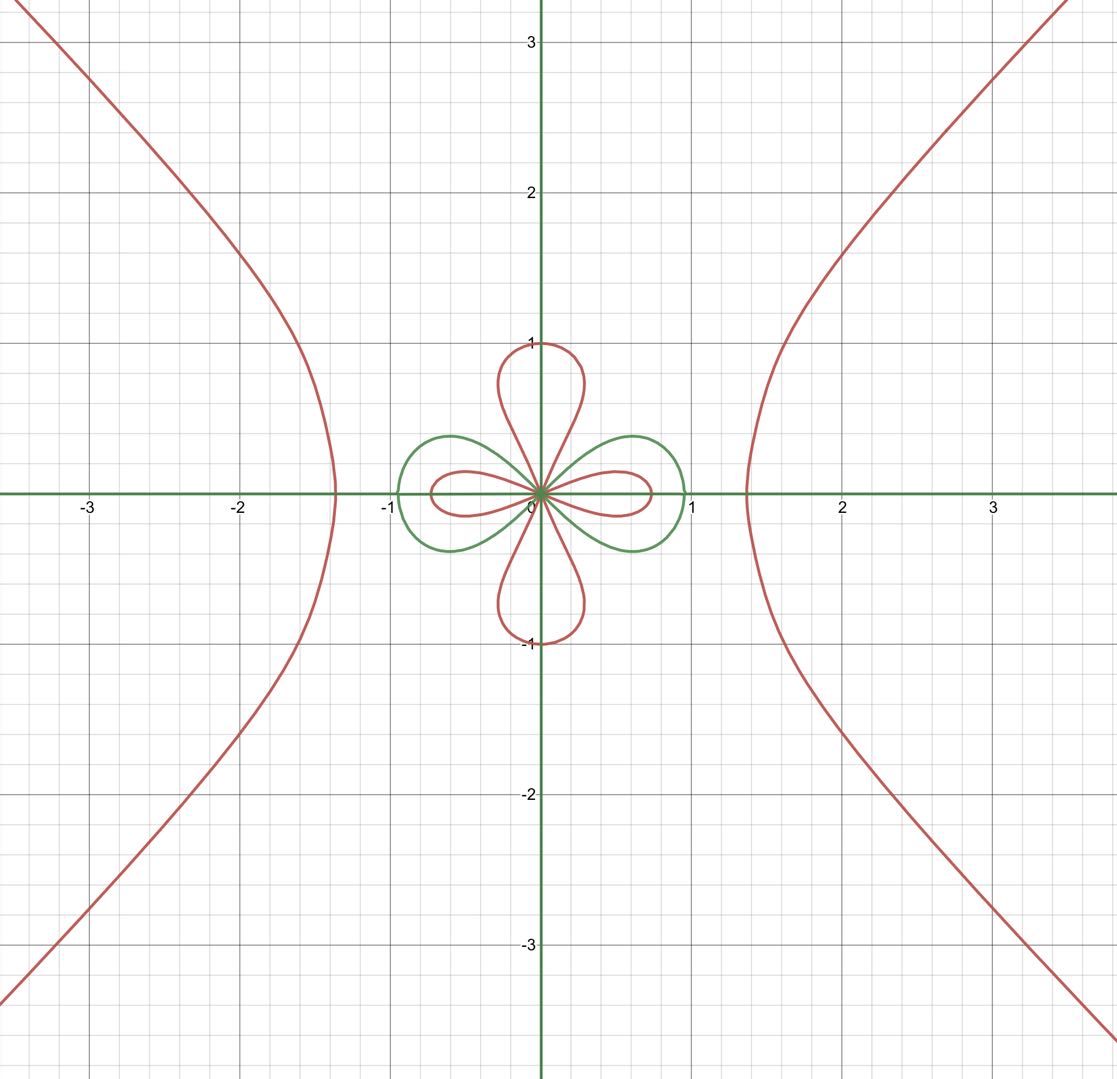}\hspace{0.5cm}\
		\label{xiaoyu}}
	\subfloat[]{\includegraphics[width=0.25\linewidth]{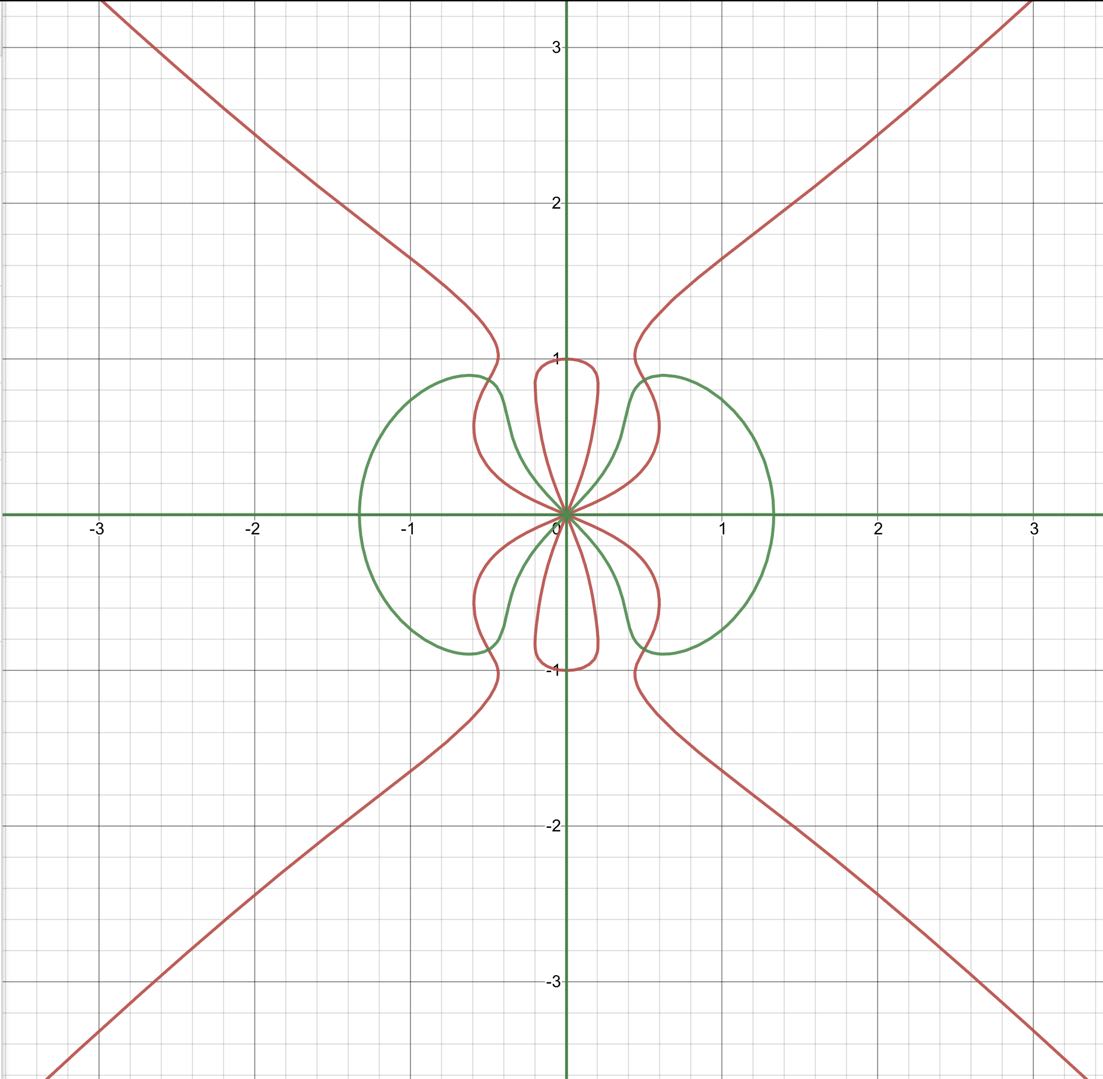}\hspace{0.5cm}
		\label{jieyu}}	
	\subfloat[]{\includegraphics[width=0.25\linewidth]{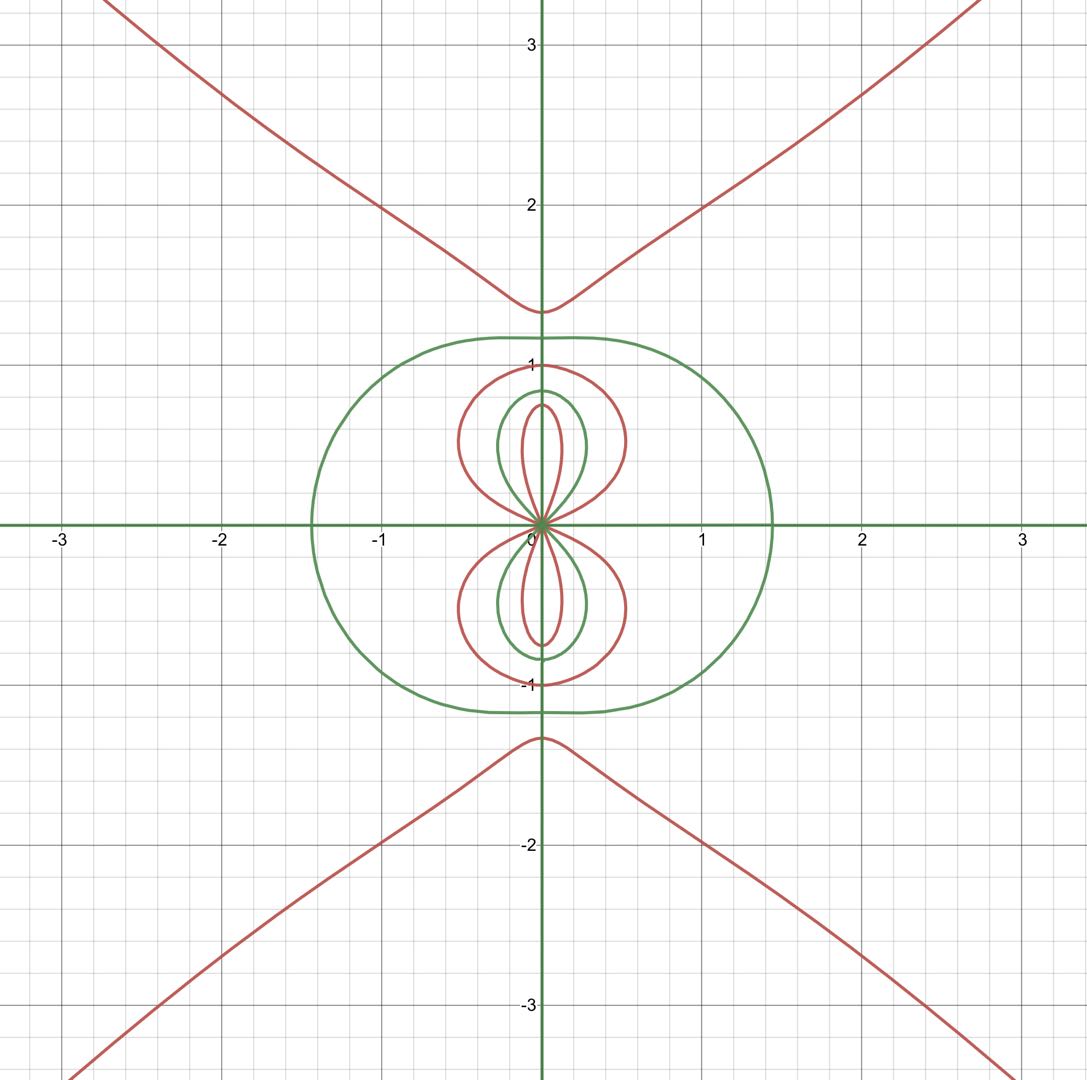}
		\label{dayu}}
	\caption{\footnotesize  Plots of the distributions for saddle points:
    $\textbf{(a)}$ $\xi<-6$,
    $\textbf{(b)}$ $-6<\xi<6$,
    $\textbf{(c)}$ $\xi>6$. The red curve represents $\Re \theta'(z)=0$, and the green curve represents $\Im \theta'(z)=0$. The intersection
    points are the saddle points which represent the zeros of $\theta'(z)=0$.}
	\label{figsaddle}
\end{figure}

\begin{figure}[htbp]
	\centering
	\subfloat[]{\includegraphics[width=0.26\linewidth]{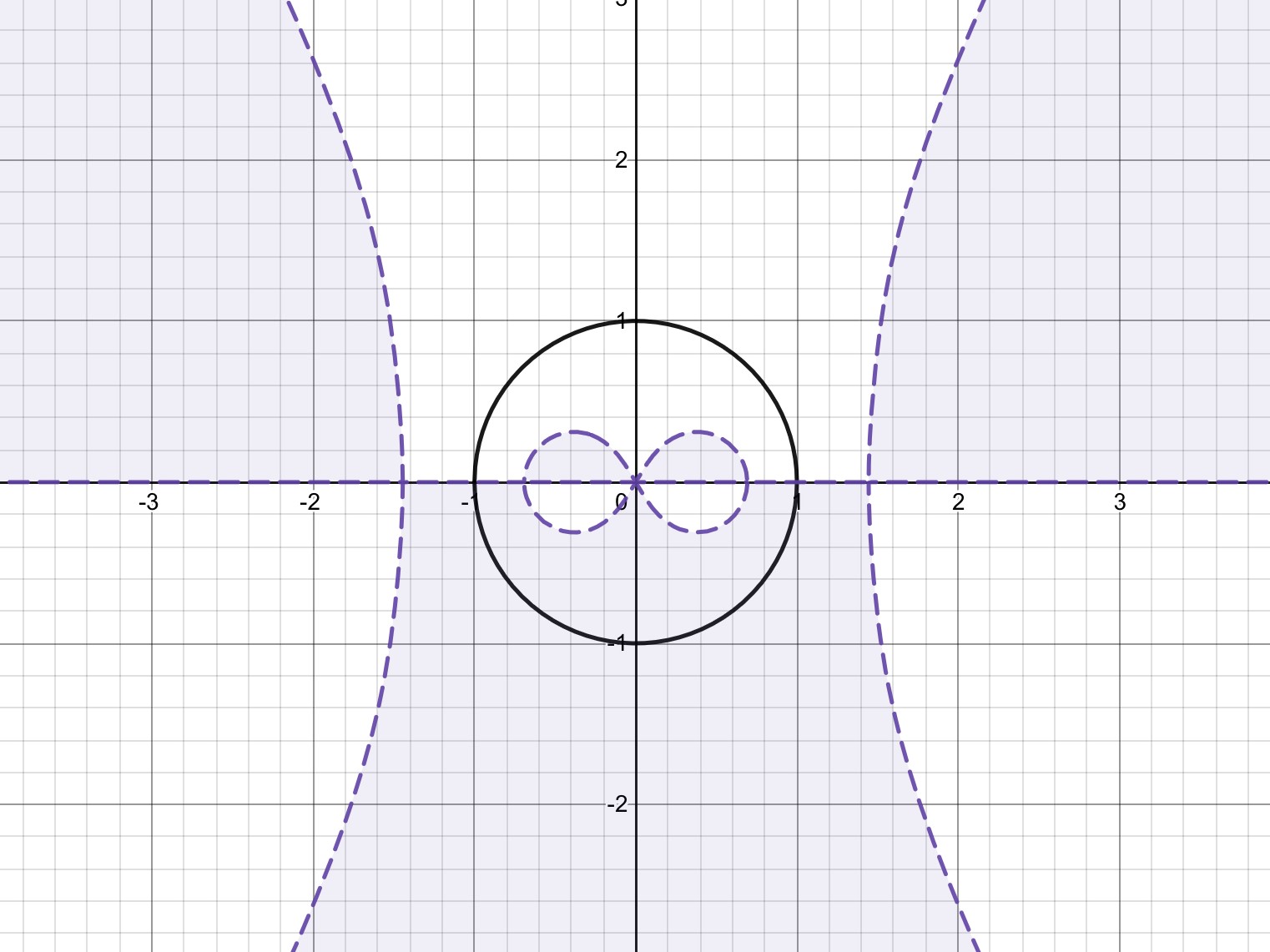}\hspace{0.5cm}
		\label{figxi4sp}}
	\subfloat[]{\includegraphics[width=0.26\linewidth]{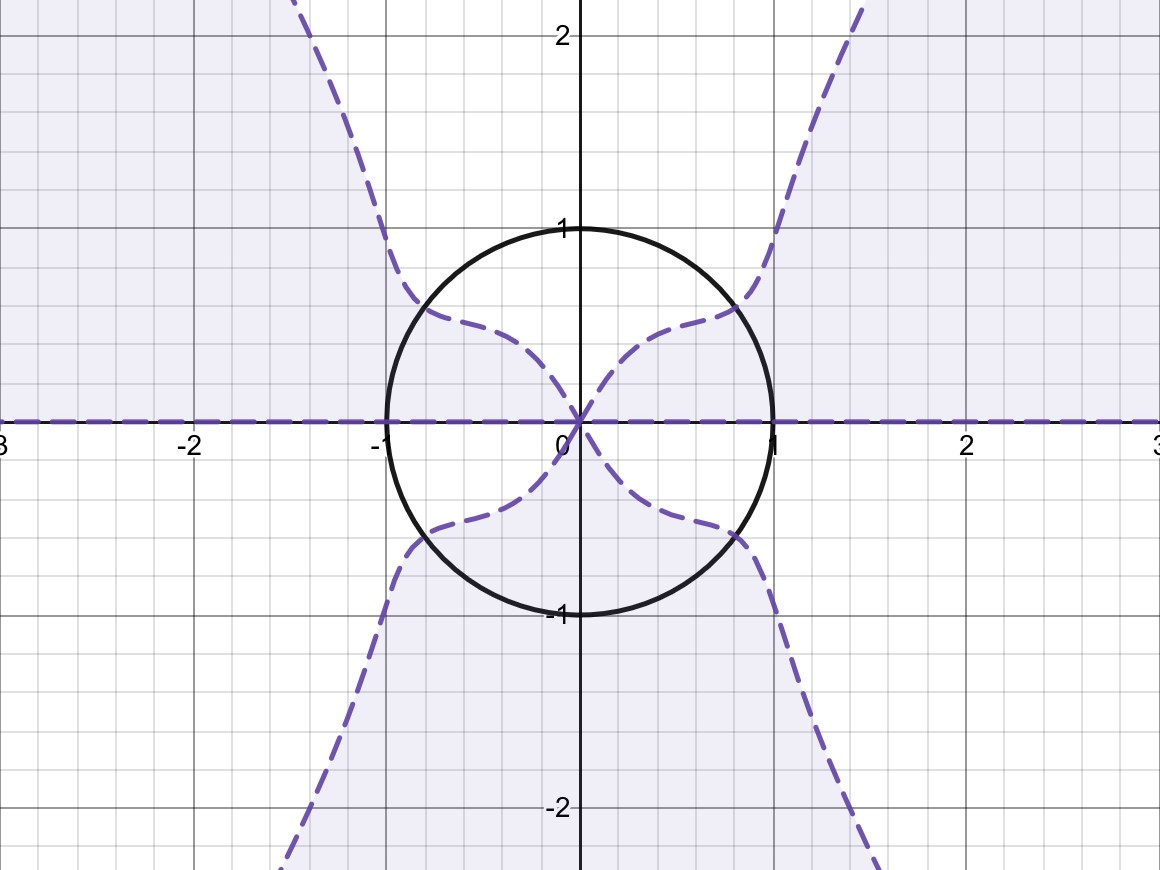}\hspace{0.5cm}
		\label{figxi0sp}}	
	\subfloat[]{\includegraphics[width=0.26\linewidth]{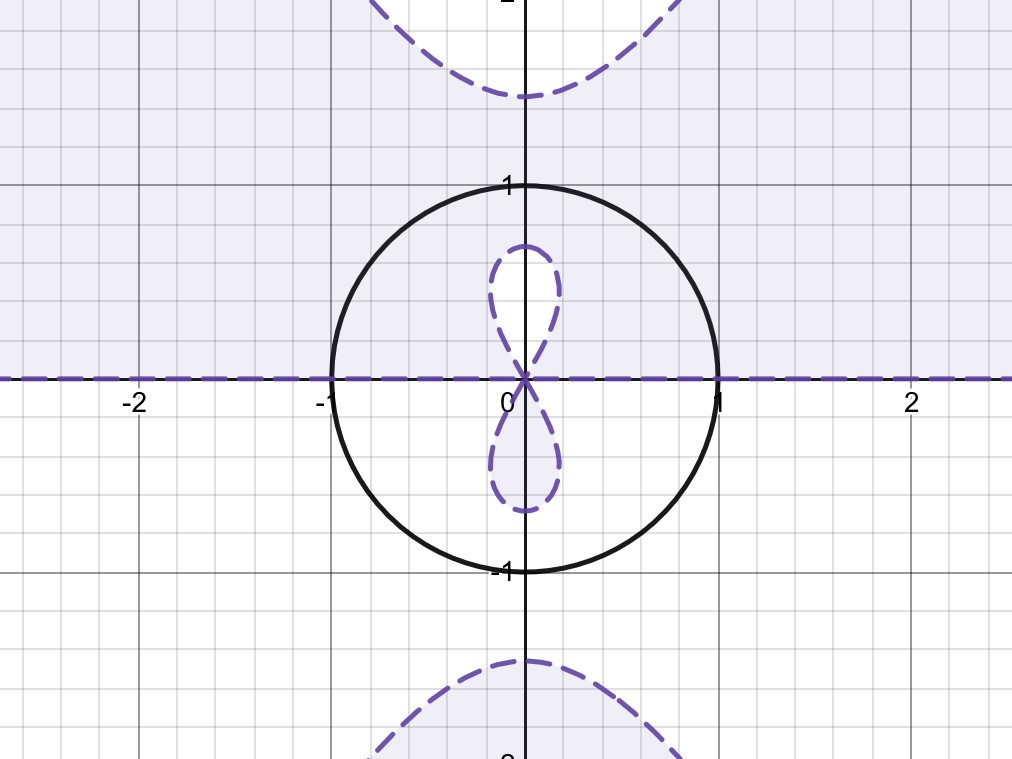}
		\label{figxi00sp}}
	\caption{\footnotesize  Signature table of $ \Re (2\i t \theta(z))$   with different $\xi$:
		$\textbf{(a)}$ $\xi<-6$,
		$\textbf{(b)}$ $-6<\xi<-2$,
		$\textbf{(c)}$ $\xi>-2$. In the purple region, $\Re(2\i t\theta)<0$, while in the white region, $\Re(2\i t\theta)>0$.
		The purple dashed curves are the critical curves.    }
	\label{figtheta}
\end{figure}

\section{Deformation of the RH Problem}\label{sec:deform}
In this section, two crucial steps are given to use the steepest descent method:
(i) converting the residue conditions of those poles, which are far away from the 
critical lines, into jump conditions on new auxiliary contours;
(ii) using a well-known factorization for the jump matrix to deform the jump line $\mathbb{R}$ 
into such lines that the jump matrices on them decay exponentially as $t\to\infty$.

\subsection{Interpolation and conjugation}
By simple calculation, we find that on the unit circle the phase function satisfies
\begin{equation}
	\Re(2\i t\theta(z))=-2t\sin\omega[\xi+2+4\cos^2\omega],
\end{equation}
where $\omega$ is the argument of $z=\e^{\i\omega}$.

For $-6<\xi<-2$,  let $\xi_0=\sqrt{-\frac{\xi+2}{4}}$,  then the discrete spectrum $\mathcal{Z}_1$ defined in (\ref{spcset}) decomposes into three sets: points with  $\Re(z_k)>\xi_0$ and exponentially decaying connection coefficients $c_k(x,t)=c_k\e^{-2\i t\theta(z_k)}$ as $t\to\infty$; points with $0\leq \Re(z_k)<\xi_0$ and growing connection coefficients; and a single point with $\Re(z_k)=\xi_0$ and a bounded connection coefficient in $t$ (see Figure \ref{figxi0sp}). \br Let \er $\rho>0$ sufficiently small such that
\begin{equation}
	\rho<\frac{1}{2}\min\left\{\min\limits_{z_j,z_k\in \mathcal{Z}^+}|\Re(z_j-z_k)|, \min\limits_{z_k\in \mathcal{Z}^+}|\Im(z_k)|\right\}.\nonumber
\end{equation}
We divide the index set $H\doteq\{0,1,\cdots,N-1\}$ into two subsets
\begin{align}
	&\triangle=\left\{j \in H:\Re(z_j)>\xi_0 \right\},\hspace{0.5cm}\nabla=\left\{j\in H:  0\leq \Re(z_j)\leq\xi_0\right\},\nonumber
\end{align}
and define
\begin{align}\label{def: Lambda}
	&\Lambda=\left\{j \in H: | \Re(z_{j})-\xi_0 |<\rho \ {\rm and } \ |\Re(\bar z_{j})+\xi_0 |<\rho\right\}, 
\end{align}
then the sets $|\Re(z-z_j)|\leq \rho$ are pairwise disjoint. If $j_0\in\Lambda\neq\varnothing$, then we have $|\e^{ \i t\theta(z_{j_0})}|=\mathcal{O}(1)$.

Note that we can use a well-known factorization to deform the jump matrix $V$:
\begin{equation}\label{V}
	V(z)=\begin{pmatrix}
		1-|r(z)|^2 & -\overline{r(z)}e^{2\i t\theta(z)}\\
		r(z)e^{-2\i t\theta(z)} & 1
	\end{pmatrix}=B(z)T_0(z)B(z)^{-\dag},
\end{equation}
where
\begin{equation}
	B(z)=\begin{pmatrix}
		1 & 0\\
		\frac{r(z)}{1-|r(z)|^2}e^{-2\i t\theta} & 1
	\end{pmatrix},\quad T_0(z)=(1-|r|^2)^{\sigma_3},\quad B(z)^{-\dag}=\begin{pmatrix}
		1 & \frac{-\bar{r}(z)}{1-|r(z)|^2}e^{2\i t\theta}\\
		0 & 1
	\end{pmatrix},\nonumber
\end{equation}
and  $B^{\dag}$ is the Hermitian conjugate of $B$. 
Therefore, one can  extend $B(z)$ into $\mathbb{C}^-$ and $B^{-\dag}$ into $\mathbb{C}^+$, while the second term $T_0$ remains on $\mathbb{R}$. This deformation is helpful when the factors into regions in which the corresponding off-diagonal exponential terms $e^{\pm2\i t\theta}$ are decaying. We then start to handle the second term $T_0$.

Define the function
\begin{equation}\label{T}
	T(z;\xi)=-\prod_{k\in\triangle}\frac{(z-z_k)(z+\bar{z}_k)}{(zz_k-1)(z\bar{z}_k+1)}\exp\left(-\frac{1}{2\pi \i}\int_{\mathbb{R}}\log(1-|r(s)|^2)\left(\frac{1}{s-z}-\frac{1}{2s}\right)ds\right).
\end{equation}

\begin{proposition}\label{propertiesofT}
	$T(z;\xi)$ is a meromorphic function in $\mathbb{C}\setminus\mathbb{R}$ and has the following properties:
	\begin{itemize}
		\item $T(z)$ has simple poles at $\pm z_k$ and simple zeros at $\pm \bar{z}_k$ with $\Re(z_k) > \xi_0$.
		\item $T(z)$ satisfies the jump condition
		\begin{equation}\label{jumpofT}
			T_-(z;\xi)=T_+(z;\xi)(1-|r(z)|^2),\hspace{0.5cm}z\in\mathbb{R}.
		\end{equation}
		\item $T(z)$ has the following symmetries
		\begin{equation}\label{symmetriesT}
			\overline{T(\bar{z};\xi)}=T^{-1}(z;\xi)=T(z^{-1};\xi)=T(-z;\xi).
		\end{equation}
		\item $T(z)$ has the following asymptotics:
		\begin{equation}
			T(\infty;\xi)=\lim_{z\rightarrow\infty}T(z;\xi)=(-1)^{|\triangle|},\quad z\rightarrow\infty,\label{Tinfty}
		\end{equation}
		where $|\triangle|$ is the cardinality of the set $\triangle$, and $|T(\infty,\xi)|=1$.
		\item As $z\rightarrow\infty$, we have the asymptotic expansion:
		\begin{equation}\label{expansionT}
			T(z;\xi)=T(\infty;\xi)\left(I-\frac{1}{z}\left(\sum_{k\in\triangle}4\i\Im(z_k)-\frac{1}{2\pi \i}\int_{\mathbb{R}}\log(1-|r(s)|^2)ds\right)+o(z^{-1})\right).
		\end{equation}
		\item $\frac{a(z)}{T(z;\xi)}$ is holomorphic in $\mathbb{C}^+$ and there exists a constant $C(q_0)$ so that
		\begin{equation}
			\left|\frac{a(z)}{T(z;\xi)}\right|<C(q_0),\quad z\in \mathbb{C}^+.
		\end{equation}
		Moreover, $\frac{a(z)}{T(z;\xi)}$ can be continuously extended to $\mathbb{R}$, with $|\frac{a(z)}{T(z;\xi)}|=1$ for $z\in\mathbb{R}$.
	\end{itemize}
\end{proposition}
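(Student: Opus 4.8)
The plan is to write $T(z;\xi)=-\,\delta(z)\,\mathcal{E}(z)$, separating the rational Blaschke-type factor $\delta(z):=\prod_{k\in\triangle}\frac{(z-z_k)(z+\bar z_k)}{(zz_k-1)(z\bar z_k+1)}$ from the exponential of a regularized Cauchy transform $\mathcal{E}(z):=\exp\big(-\frac{1}{2\pi i}\int_{\mathbb{R}}\log(1-|r(s)|^2)\big(\frac{1}{s-z}-\frac{1}{2s}\big)ds\big)$. The rational factor carries all the zeros and poles and half of the symmetry content; $\mathcal{E}$ is analytic and zero-free off $\mathbb{R}$ and carries the jump. As input I will use, from the earlier propositions, that $g:=\log(1-|r|^2)\in L^p(\mathbb{R})$ for every $p\ge1$, that $g(s)=O(|s|^{4})$ as $s\to0$ and $O(|s|^{-4})$ as $s\to\infty$ by the asymptotics (\ref{asymptoticsofr}), and that $g$ is even because $S(s)=\overline{S(-s)}$ forces $|r(-s)|=|r(s)|$. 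In particular $g(s)/s$ is odd and absolutely integrable, so $\int_{\mathbb{R}}g(s)/(2s)\,ds=0$: the $-1/(2s)$ term is a genuine regularization that does not alter $T$ but improves its behaviour at $0$ and at $\infty$.

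For the meromorphy, the zero/pole inventory and the jump (\ref{jumpofT}): standard Cauchy-transform theory makes $\mathcal{E}$ analytic and non-vanishing on $\mathbb{C}\setminus\mathbb{R}$ with non-tangential boundary values on $\mathbb{R}\setminus\{0,\pm1\}$, so $T$ is meromorphic with exactly the zeros and poles of $\delta$. Factoring $\delta$ with $|z_k|=1$ (hence $1/z_k=\bar z_k$), the denominator vanishes precisely at $zz_k=1$, i.e.\ $z=\bar z_k$, and at $z\bar z_k=-1$, i.e.\ $z=-z_k$ (the simple poles), while the numerator gives the simple zeros $z_k$ and $-\bar z_k$; for $k\in\triangle$ these points are distinct and lie off $\mathbb{R}$, so all are simple. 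The Plemelj formula applied to $\mathcal{E}$ gives $\mathcal{E}_+(z)/\mathcal{E}_-(z)=e^{-g(z)}=(1-|r(z)|^2)^{-1}$, and since $\delta$ is analytic across $\mathbb{R}$ this is exactly $T_-=T_+(1-|r|^2)$.

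The symmetries (\ref{symmetriesT}) I will obtain from three changes of variables in the integral, using in turn that $g$ is real on $\mathbb{R}$ (for $z\mapsto\bar z$), that $g(-s)=g(s)$ (for $s\mapsto-s$), and that $g(1/s)=g(s)$, which is the symmetry $r(z^{-1})=-\overline{r(z)}$ (for $s\mapsto1/s$); in the last case a short partial-fraction computation shows the kernel $\frac{1}{s-z}-\frac{1}{2s}$ is sent to minus its reciprocal-argument counterpart, and the regularizing $\frac{1}{2s}$ is precisely what makes the Jacobian/orientation bookkeeping close. Each substitution yields $\mathcal{E}(\cdot)=\mathcal{E}(z)^{-1}$ with the appropriate argument, and the same substitutions applied to $\delta$ (again using $\bar z_k=1/z_k$) give $\overline{\delta(\bar z)}=\delta(z)^{-1}=\delta(z^{-1})=\delta(-z)$; since $-1$ is fixed by all three operations, (\ref{symmetriesT}) follows. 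The value (\ref{Tinfty}) and the expansion (\ref{expansionT}) then come from expanding both factors in $1/z$: one checks $\mathcal{E}(z)=1-\frac1z\big(-\frac{1}{2\pi i}\int_{\mathbb{R}}g\,ds\big)+O(z^{-2})$ (the $\frac{1}{2s}$-contribution again dropping out by parity), while each conjugate pair of Blaschke factors equals $|z_k|^{-2}\big(1-\frac{4i\,\im z_k}{z}+O(z^{-2})\big)=1-\frac{4i\,\im z_k}{z}+O(z^{-2})$, so $\delta(\infty)$ is a real unimodular constant and $T(z;\xi)=T(\infty;\xi)\big(I-\frac1z(\sum_{k\in\triangle}4i\,\im z_k-\frac{1}{2\pi i}\int_{\mathbb{R}}\log(1-|r|^2)ds)+O(z^{-2})\big)$.

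The last item is cleanest by comparison with the trace formula (\ref{traceformula}) rather than by direct estimation. Writing $a(z)$ from (\ref{traceformula}) and dividing by (\ref{T}), the two Cauchy-integral exponents differ only by $-\frac{1}{2\pi i}\int_{\mathbb{R}}g(s)/(2s)\,ds=0$, so the exponential factors cancel identically; and for each $n\in\triangle$ the elementary identities $zz_n-1=z_n(z-\bar z_n)$ and $z\bar z_n+1=\bar z_n(z+z_n)$ collapse the corresponding quotient of Blaschke factors to $z_n\bar z_n=|z_n|^2=1$. Hence $\frac{a(z)}{T(z;\xi)}=-\prod_{n\in\nabla}\frac{(z-z_n)(z+\bar z_n)}{(z-\bar z_n)(z+z_n)}$, a finite product of elementary Blaschke factors times a unimodular constant, whose only poles $\bar z_n,-z_n$ ($n\in\nabla$) lie in $\mathbb{C}^-$. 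So the ratio is holomorphic on $\mathbb{C}^+$ and extends analytically, hence continuously, to $\mathbb{R}^+$; the identity $|z-w|=|z-\bar w|$ for real $z$ gives $|a/T|=1$ on $\mathbb{R}$, and $|z-w|\le|z-\bar w|$ for $z\in\overline{\mathbb{C}^+}$ with $\im w>0$ gives $|a(z)/T(z;\xi)|\le1$ on $\overline{\mathbb{C}^+}$, so any $C(q_0)\ge1$ works. I expect the one genuinely delicate point to be the behaviour of the Cauchy transform near $s=\pm1$, where $g(s)\sim 2\log|s-1|$ blows up: one must confirm $\mathcal{E}$ still makes sense there, and — more to the point — that whatever singular growth $T$ develops at $\pm1$ is harmless for the bound on $a/T$, which it is precisely because the exponential factor of $T$ cancels against the same factor hidden in $a$. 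Everything else is bookkeeping with Blaschke factors and changes of variables.
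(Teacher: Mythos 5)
Your proposal is correct and follows essentially the same route as the paper: split $T$ into the Blaschke product and the regularized Cauchy-transform exponential, use $\int_{\mathbb{R}}\log(1-|r(s)|^2)\frac{ds}{2s}=0$ together with the per-factor limits for (\ref{Tinfty})--(\ref{expansionT}), and divide the trace formula (\ref{traceformula}) by (\ref{T}) so that $a/T$ collapses to $-\prod_{n\in\nabla}\frac{(z-z_n)(z+\bar z_n)}{(z-\bar z_n)(z+z_n)}$, which is holomorphic in $\mathbb{C}^+$, bounded by $1$ on $\overline{\mathbb{C}^+}$ and unimodular on $\mathbb{R}$. One remark: your (correct) per-factor limit $|z_k|^{-2}=1$, combined with the leading minus sign in (\ref{T}), gives $T(\infty)=-1$ rather than the literal $\prod_{k\in\triangle}(-|z_k|^2)$ of (\ref{Tinfty}) --- a sign-bookkeeping discrepancy already present in the paper's own computation --- but the properties actually used later ($T(\infty)$ real with $|T(\infty)|=1$) are exactly what you establish.
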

\begin{proof}
	The first three properties are easy to prove according to the definition of $T(z)$, we begin from the fourth.
	As $z\rightarrow\infty$, the product
	\begin{align*}
		\prod_{k\in \triangle}\frac{(z-z_k)(z+\bar{z}_k)}{(zz_k-1)(z\bar{z}_k+1)}=\prod_{k\in \triangle}\frac{1}{z_k}\left(\frac{z-z_k}{z-\bar{z}_k}\right)\frac{-1}{\bar{z}_k}\left(\frac{z+\bar{z}_k}{z+z_k}\right)
		\rightarrow(-1)^{|\triangle|},
	\end{align*}
	together with  $\frac{1}{s-z}-\frac{1}{2s}\rightarrow-\frac{1}{2s}$ and $\int_{\mathbb{R}}\frac{\log(1-|r(s)|^2)}{2s}ds=0$ implies the fourth property.
	The next property is a simple corollary of the last one.
	In the end, from the trace formula (\ref{traceformula}) we have
	\begin{equation}\label{ratio}
		\frac{a(z)}{T(z)}=-\prod_{k\in\nabla}\frac{(z-z_k)(z+\bar{z}_k)}{(z-\bar{z}_k)(z+z_k)}.
	\end{equation}
Note that the absolute value in the right-hand-side is not larger than $1$ for $z\in\mathbb{C}^+$. So we have proved the proposition.
\end{proof}
Next, we proceed with interpolations and conjugations.
We first introduce the interpolation function $G(z)$:
\begin{itemize}\label{G}
	\vspace{-1em}
\item For $j\in\triangle\setminus\Lambda$,
\begin{equation}
G(z)=\left\{\begin{aligned}
	&\left(\begin{array}{cc}
		1 & -\frac{(z-z_j)\e^{2\i t\theta(z_j)}}{c_j}\\
		0 & 1
	\end{array}\right), &&|z-z_j|<\rho,\\
	&\left(\begin{array}{cc}
		1 & \frac{(z+\bar{z}_j)\e^{2\i t\theta(z_j)}}{\bar{c}_j}\\
		0 & 1
	\end{array}\right),&& |z+\bar{z}_j|<\rho,\\
	&\left(\begin{array}{cc}
		1 & 0\\
		-\frac{(z-\bar{z}_j)\e^{-2\i t\theta(\bar{z}_j)}}{\bar{c}_j} & 1
	\end{array}\right),&&|z-\bar{z}_j|<\rho,\\
	&\left(\begin{array}{cc}
		1 & 0\\
		\frac{(z+z_j)\e^{-2\i t\theta(\bar{z}_j)}}{c_j} & 1
	\end{array}\right),&& |z+z_j|<\rho;
\end{aligned}\right.
\end{equation}
\item For $j\in\nabla\setminus\Lambda$,
\begin{equation}
G(z)=\left\{\begin{aligned}
	&\left(\begin{array}{cc}
		1 & 0\\
		-\frac{c_j\e^{-2\i t\theta(z_j)}}{z-z_j} & 1
	\end{array}\right),&&|z-z_j|<\rho,\\
	&\left(\begin{array}{cc}
		1 & 0\\
		\frac{\bar{c}_j\e^{-2\i t\theta(z_j)}}{z+\bar{z}_j} & 1
	\end{array}\right),&&|z+\bar{z}_j|<\rho,\\
	&\left(\begin{array}{cc}
		1 & -\frac{\bar{c}_j\e^{2\i t\theta(\bar{z}_j)}}{z-\bar{z}_j}\\
		0 & 1
	\end{array}\right),&&|z-\bar{z}_j|<\rho,\\
	&\left(\begin{array}{cc}
		1 & \frac{c_j\e^{2\i t\theta(\bar{z}_j)}}{z+z_j}\\
		0 & 1
	\end{array}\right),&&|z+z_j|<\rho;
\end{aligned}\right.
\end{equation}
\item Elsewhere, $G(z)=I$.
\end{itemize}

Then we can introduce the first transformation which converts the poles that are far away from the critical lines into jumps
\begin{equation}\label{m1}
	m^{(1)}(z)=T(\infty)^{-\sigma_3}m(z)G(z)T(z)^{\sigma_3}.
\end{equation}
Define the contour
\begin{equation}\label{contour: Sigma1}
	\Sigma^{(1)}\doteq\mathbb{R}\cup\left\{\bigcup_{j\in H\setminus\Lambda}\left\{z\in\mathbb{C}:|z\pm z_j|=\rho \text{ or }|z\pm\bar{z}_j|=\rho\right\}\right\}.
\end{equation}
As shown in Figure \ref{Sigma1}, the small circles around the poles are oriented counterclockwise in $\mathbb{C}^+$ and clockwise in $\mathbb{C}^-$.
Then  $m^{(1)}(z)$   satisfies the following  RH  problem.
\begin{figure}
	\centering
	{\includegraphics[width=0.35\linewidth]{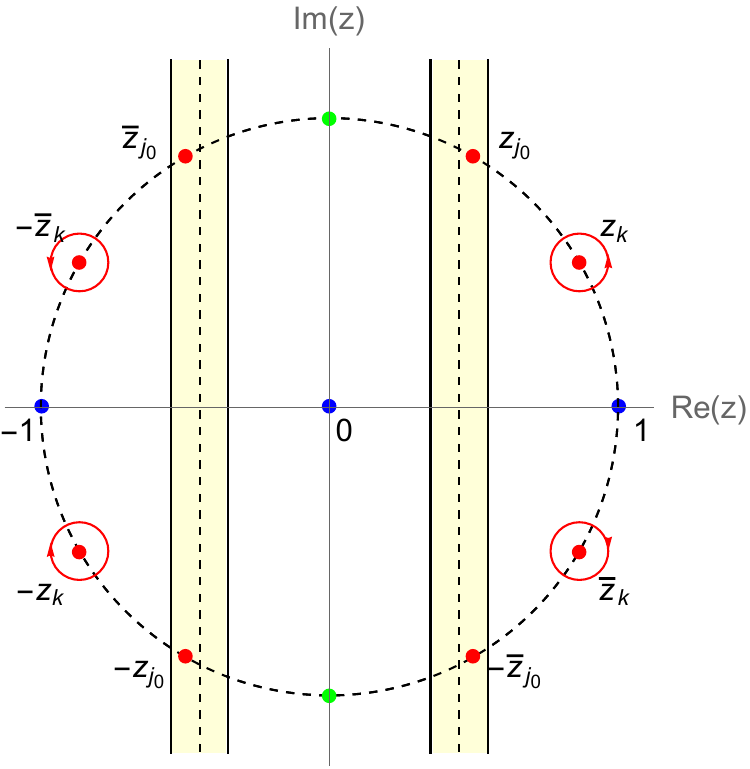}}
\caption{The dashed lines are where $\Re z=\pm\xi_0$. We divide the discrete spectrum in the first quadrant $D_1$  into three sets: $\triangle\setminus\Lambda$, $\nabla\setminus\Lambda$ and $\Lambda$, with the poles reserved in $\Lambda$. By the symmetries, the discrete spectrum in the second quadrant is also divided into three sets. }\label{Sigma1}
\end{figure}

\begin{RHP}\label{rhp1}
	Find a $2\times2$ matrix-valued function $m^{(1)}(z;x,t)$ such that
	\begin{enumerate}
		\item $m^{(1)}(z;x,t)$ is meromorphic in $\mathbb{C}\setminus\Sigma^{(1)}$, where $\Sigma^{(1)}$ is defined in \eqref{contour: Sigma1}.
		\item $m^{(1)}$ has the following asymptotics
		\begin{align}
			&m^{(1)}(z;x,t)=I+\mathcal{O}(z^{-1}),\text{ as }z\rightarrow\infty,\nonumber\\
			&zm^{(1)}(z;x,t)=\sigma_2+\mathcal{O}(z),\text{ as }z\rightarrow 0. \nonumber
		\end{align}
		\item $m^{(1)}_{\pm}(z;x,t)$ exist for $z\in\Sigma^{(1)}$ and meet the jump relation $m_+^{(1)}(z;x,t)=m_-^{(1)}(z;x,t)V^{(1)}(z)$, where
		\begin{itemize}
			\item for $z\in\mathbb{R}$,
			\begin{equation}
				V^{(1)}(z)= \begin{pmatrix}
					1 & 0\\
					\frac{r(z)}{1-|r|^2}T_-^{-2}(z)\e^{-2\i t\theta} & 1
				\end{pmatrix}
			\begin{pmatrix}
					1 & \frac{-\bar{r}(z)}{1-|r|^2}T_+^{-2}(z)\e^{2\i t\theta}\\
					0 & 1
				\end{pmatrix},
			\end{equation}
			\item for $j\in\triangle\setminus\Lambda$,
			\begin{equation}
				V^{(1)}(z)=\left\{\begin{aligned}
					&\begin{pmatrix}
						1 & -\frac{(z-z_j)\e^{2\i t\theta(z_j)}}{c_j}T^{-2}(z)\\
						0 & 1
					\end{pmatrix},&&|z-z_j|=\rho, \\
					&\begin{pmatrix}
						1 & \frac{(z+\bar{z}_j)\e^{2\i t\theta(z_j)}}{\bar{c}_j}T^{-2}(z)\\
						0 & 1
					\end{pmatrix},& &|z+\bar{z}_j|=\rho,\\
					&\begin{pmatrix}
						1 & 0\\
						-\frac{(z-\bar{z}_j)\e^{-2\i t\theta(\bar{z}_j)}}{\bar{c}_j}T^2(z) & 1
					\end{pmatrix}, & &|z-\bar{z}_j|=\rho, \\
					&\begin{pmatrix}
						1 & 0\\
						\frac{(z+z_j)\e^{-2\i t\theta(\bar{z}_j)}}{c_j}T^2(z) & 1
					\end{pmatrix}, & &|z+z_j|=\rho,
				\end{aligned}\right.
			\end{equation}
			\item for $j\in\nabla\setminus\Lambda$,
			\begin{equation}
				V^{(1)}(z)=\left\{\begin{aligned}
					&\begin{pmatrix}
						1 & 0\\
						-\frac{c_j\e^{-2\i t\theta(z_j)}}{z-z_j}T^2(z) & 1
					\end{pmatrix}, & &|z-z_j|=\rho, \\
					&\begin{pmatrix}
						1 & 0\\
						\frac{\bar{c}_j\e^{-2\i t\theta(z_j)}}{z+\bar{z}_j}T^2(z) & 1
					\end{pmatrix}, & &|z+\bar{z}_j|=\rho, \\
					&\begin{pmatrix}
						1 & -\frac{\bar{c}_j\e^{2\i t\theta(\bar{z}_j)}}{z-\bar{z}_j}T^{-2}(z)\\
						0 & 1
					\end{pmatrix}, & &|z-\bar{z}_j|=\rho,\\
					&\begin{pmatrix}
						1 & \frac{c_j\e^{2\i t\theta(\bar{z}_j)}}{z+z_j}T^{-2}(z)\\
						0 & 1
					\end{pmatrix}, & &|z+z_j|=\rho.
				\end{aligned}\right.
			\end{equation}
		\end{itemize}
		
		\item If there exists a $j_0\in\Lambda$,  then $m^{(1)}(z;x,t)$ satisfies the following residue conditions at $\pm z_{j_0}$ and $\pm\bar{z}_{j_0}$:
		
		If $j_0\in\triangle\cap\Lambda$,
		\begin{align}
			&\Res_{z=z_{j_0}}m^{(1)}(z)=\lim_{z\rightarrow z_{j_0}}m^{(1)}(z)\begin{pmatrix}
				0 & c_{j_0}^{-1}\e^{2\i t\theta(z_{j_0})}T'(z_{j_0})^{-2} \\
				0 & 0
			\end{pmatrix},\nonumber\\
			&\Res_{z=\bar{z}_{j_0}}m^{(1)}(z)=\lim_{z\rightarrow\bar{z}_{j_0}}m^{(1)}(z)\begin{pmatrix}
				0 & 0\\
				\bar{c}_{j_0}^{-1}\e^{2\i t\theta(z_{j_0})}\bar{T'}(z_{j_0})^{-2} & 0
			\end{pmatrix},\nonumber\\
			&\Res_{z=-\bar{z}_{j_0}}m^{(1)}(z)=\lim_{z\rightarrow-\bar{z}_{j_0}}m^{(1)}(z)\begin{pmatrix}
				0 & -\bar{c}_{j_0}^{-1}\e^{2\i t\theta(z_{j_0})}\bar{T}'(z_{j_0})^{-2}\\
				0 & 0
			\end{pmatrix},\nonumber\\
			&\Res_{z=-z_{j_0}}m^{(1)}(z)=\lim_{z\rightarrow-z_{j_0}}m^{(1)}(z)\begin{pmatrix}
				0 & 0\\
				-c_{j_0}^{-1}\e^{2\i t\theta(z_{j_0})}T'(z_{j_0})^{-2} & 0
			\end{pmatrix}.\nonumber
		\end{align}
	 If $j_0\in\nabla\cap\Lambda$,
	\begin{align}
		&\Res_{z=z_{j_0}}m^{(1)}(z)=\lim_{z\rightarrow z_{j_0}}m^{(1)}(z)\begin{pmatrix}
			0 & 0\\
			c_{j_0}\e^{-2\i t\theta(z_{j_0})}T(z_{j_0})^{2} & 0
		\end{pmatrix},\nonumber\\
		&\Res_{z=\bar{z}_{j_0}}m^{(1)}(z)=\lim_{z\rightarrow\bar{z}_{j_0}}m^{(1)}(z)\begin{pmatrix}
			0 &  \bar{c}_{j_0}\e^{-2\i t\theta(z_{j_0})}\bar{T}(z_{j_0})^{2} \\
			0& 0
		\end{pmatrix},\nonumber\\
		&\Res_{z=-\bar{z}_{j_0}}m^{(1)}(z)=\lim_{z\rightarrow-\bar{z}_{j_0}}m^{(1)}(z)\begin{pmatrix}
			0 & 0\\
			-\bar{c}_{j_0}\e^{-2\i t\theta(z_{j_0})}\bar{T}(z_{j_0})^{2} & 0
		\end{pmatrix},\nonumber\\
		&\Res_{z=-z_{j_0}}m^{(1)}(z)=\lim_{z\rightarrow-z_{j_0}}m^{(1)}(z)\begin{pmatrix}
			0 &  -c_{j_0}\e^{-2\i t\theta(z_{j_0})}T(z_{j_0})^{2} \\
			0& 0
		\end{pmatrix}.\nonumber
	\end{align}
	\item  $m^{(1)}(z)$ satisfies the  symmetries: $m^{(1)}(z)=\sigma_1\overline{m^{(1)}(\bar{z})}\sigma_1=z^{-1}m^{(1)}(z^{-1})\sigma_2=\overline{m^{(1)}(-\bar{z})}$.
\end{enumerate}
\end{RHP}

\begin{proof}
	Now we prove that $m^{(1)}(z)$ satisfies the above RH problem.
	The first statement and the asymptotics as $z\rightarrow\infty$ in RH problem \ref{rhp1} can be obtained directly from 
	RH problem \ref{rhpm}, we just prove the asymptotics as $z\rightarrow0$. Using the symmetry in (\ref{symmetriesT}) and the expansion in  (\ref{expansionT}), we have
	\begin{align*}
		zm^{(1)}&=T(\infty)^{-\sigma_3}zm(z)T(z)^{\sigma_3}=T(\infty)^{-\sigma_3}(\sigma_2+\mathcal{O}(z))T(z^{-1})^{-\sigma_3}\\
		&=T(\infty)^{-\sigma_3}(\sigma_2+\mathcal{O}(z))(T(\infty)+\mathcal{O}(z))^{-\sigma_3}=\sigma_2+\mathcal{O}(z).
	\end{align*}
	The jump relation is also a direct inference of (\ref{G}) and  RH Problem \ref{rhpm}. Now we derive the residue conditions. For $j_0\in\nabla\cap\Lambda$,
	\begin{align*}
		\Res_{z=z_{j_0}}m^{(1)}&=\Res_{z=z_{j_0}}T(\infty)^{-\sigma_3}m(z)T(z)^{\sigma_3}\\
		&=\lim_{z\rightarrow z_{j_0}}T(\infty)^{-\sigma_3}m(z)T(z)^{\sigma_3}T(z)^{-\sigma_3}\begin{pmatrix}
			0 & 0\\
			c_j\e^{-2\i t\theta(z_{j_0})} & 0
		\end{pmatrix}T(z)^{\sigma_3}\\
		&=\lim_{z\rightarrow z_{j_0}}m^{(1)}(z)\begin{pmatrix}
			0 & 0\\
			c_{j_0}\e^{-2\i t\theta(z_{j_0})}T(z_{j_0})^{2} & 0
		\end{pmatrix}.
	\end{align*}
	For $j_0\in\triangle\cap\Lambda$, we have
	\begin{align*}
		\Res_{z=z_{j_0}}m^{(1)}&=\lim_{z\rightarrow z_{j_0}}(z-z_{j_0})T(\infty)^{-\sigma_3}\left(\frac{m_1^+(z)T(z)}{a(z)},\frac{m_2^-(z)}{T(z)}\right)=T(\infty)^{-\sigma_3}\left(0,\frac{m_2^-(z_{j_0})}{T'(z_{j_0})}\right)\\
		&=\lim_{z\rightarrow z_{j_0}}T(\infty)^{-\sigma_3}\left(\frac{m_1^+(z)T(z)}{a(z)},\frac{m_2^-(z)}{T(z)}\right)\begin{pmatrix}
			0 & c_{j_0}^{-1}\e^{2\i t\theta(z_{j_0})}T'(z_{j_0})^{-2}\\
			0 & 0
		\end{pmatrix},
	\end{align*}
	where we used
	\begin{align*}
		m_1^{(1)}(z_{j_0})&=T(\infty)^{-\sigma_3}\lim_{z\rightarrow z_{j_0}}m_1(z)T(z)=T(\infty)^{-\sigma_3}\lim_{z\rightarrow z_{j_0}}[m_1(z)(z-z_{j_0})]\frac{T(z)-T(z_{j_0})}{z-z_{j_0}}\\
		&=T(\infty)^{-\sigma_3}\Res_{z=z_{j_0}}m_1(z)T'(z_{j_0})=T(\infty)^{-\sigma_3}c_{j_0}e^{-2it\theta(z_{j_0})}m_2(z_{j_0})T'(z_{j_0}).
	\end{align*}
	The others can be obtained using a similar method.
	At last we prove the symmetries for $m^{(1)}(z)$:
	\begin{align*}
		&\overline{m^{(1)}(\bar{z})}=\bar{T}(\infty)^{-\sigma_3}\overline{m}(\bar{z})\overline{T}(\bar{z})^{\sigma_3}=\bar{T}(\infty)^{-\sigma_3}\sigma_1m(z)\sigma_1T(z)^{\sigma_3}=\sigma_1m^{(1)}(z)\sigma_1;\\
		&m^{(1)}(z^{-1})=T(\infty)^{-\sigma_3}m(z^{-1})T(z^{-1})^{\sigma_3}=zT(\infty)^{-\sigma_3}m(z)\sigma_2T(z)^{-\sigma_3}=zm^{(1)}(z)\sigma_2,\\
		&\overline{m^{(1)}(-\bar{z})}=\overline{T(\infty)}^{-\sigma_3}\overline{m(-\bar{z})}\overline{T(-\bar{z})}^{\sigma_3}=T(\infty)^{-\sigma_3}m(z)T(z)^{\sigma_3}=m^{(1)}(z).
	\end{align*}
\end{proof}

\subsection{Opening $\bar{\partial}$ lenses}
In this section, we aim to eliminate the jump on the real axis by choosing a suitable angle, ensuring that the lenses avoid the poles' surrounding disks.

First we are going to show that there's no phase point on the real axis when $-6<\xi<-2$.
\begin{proposition}
	When $|\xi+4|<2 $, there's no phase point in the real axis.
\end{proposition}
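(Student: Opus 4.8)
The plan is to reduce the assertion to the absence of positive real roots of a cubic polynomial, which then follows from an explicit factorization. First I would use the formula $\theta'(z)=\tfrac{3}{2}z^{2}+\tfrac{\xi+3}{2}+\tfrac{\xi+3}{2z^{2}}+\tfrac{3}{2z^{4}}$ recorded above: a phase point on $\mathbb{R}$ is a zero $z\in\mathbb{R}\setminus\{0\}$ of $\theta'$, and multiplying $\theta'(z)=0$ by $2z^{4}>0$ and putting $w=z^{2}>0$ turns it into $P(w):=3w^{3}+(\xi+3)w^{2}+(\xi+3)w+3=0$. Clearing the denominator is harmless since $z=0$ is a pole, not a zero, of $\theta'$. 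Hence it suffices to show that $P(w)>0$ for all $w>0$ whenever $|\xi+4|<2$.

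The key step is the factorization
\[
P(w)=3(w^{3}+1)+(\xi+3)(w^{2}+w)=(w+1)\bigl(3w^{2}+\xi w+3\bigr),
\]
which one verifies using $w^{3}+1=(w+1)(w^{2}-w+1)$ and $w^{2}+w=w(w+1)$. For $w>0$ the factor $w+1$ is positive, so on $(0,\infty)$ the sign of $P$ is the sign of the quadratic $q(w):=3w^{2}+\xi w+3$.

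Finally, $|\xi+4|<2$ means $-6<\xi<-2$, so $\xi^{2}<36$ and the discriminant $\xi^{2}-36$ of $q$ is strictly negative; since $q$ has positive leading coefficient, $q(w)>0$ for all real $w$, and therefore $P(w)>0$ for every $w>0$. This contradicts $P(w)=0$, proving that $\theta'$ has no zero on $\mathbb{R}\setminus\{0\}$, i.e.\ no phase point lies on the real axis. I do not anticipate a genuine obstacle here; the only things to keep straight are the harmless clearing of denominators at $z=0$ noted above and, if one wishes, the complementary remark that the remaining four phase points correspond to $w=\tfrac{1}{6}\bigl(-\xi\pm i\sqrt{36-\xi^{2}}\bigr)$, which satisfy $|w|=1$ and $w\notin\mathbb{R}$ and hence sit on the unit circle off the real axis, consistent with Fig.~\ref{jieyu}.
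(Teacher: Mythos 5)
Your argument is correct, and it is in substance equivalent to the paper's proof but organized around a different substitution. The paper sets $s=z+1/z$, reduces $\theta'(z)=0$ on $\mathbb{R}\setminus\{0\}$ to $s^3+(\xi/3-2)s=0$, and gets a contradiction from the constraint $s^2\ge 4$ for real $z\neq0$, since the candidate value $s^2=2-\xi/3$ stays strictly below $4$ for $-6<\xi<-2$ (incidentally, the paper's stated interval $s^2\in(4/3,4)$ should read $(8/3,4)$, which does not affect the contradiction). You instead clear denominators, set $w=z^2>0$, and factor the cubic as $(w+1)\bigl(3w^2+\xi w+3\bigr)$, concluding positivity on $(0,\infty)$ from the negative discriminant $\xi^2-36<0$; your factorization is correct, and the reduction is legitimate since $z=0$ is a pole, not a zero, of $\theta'$. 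The two routes are linked by $w+1/w=s^2-2$, so the algebraic content is the same; what your version buys is a direct positivity proof rather than a contradiction, the explicit observation that the argument covers the full range $|\xi|<6$ (matching the paper's subsequent Remark), and the bonus identification of the four non-real phase points: the roots of $3w^2+\xi w+3$ have modulus one, and the factor $w+1$ recovers $z=\pm i$, consistent with the paper's description of the phase-point configuration. The paper's proof is marginally shorter because it never factors the cubic, but both arguments are elementary and complete.
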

\begin{proof}
	From (\ref{theta}), we have
	\begin{equation}
		\theta'(z)=\frac{1}{2}\left\{3(z^2+\frac{1}{z^4})+(\xi+3)(\frac{1}{z^2}+1)\right\}.
	\end{equation}
	Assume that $\theta'(z)$ has zeros in the real axis, then
	\begin{equation*}
		3(z^3+\frac{1}{z^3})+(\xi+3)(\frac{1}{z}+z)=0.
	\end{equation*}
	Let $s=z+1/z\in\mathbb{R}$, then $s\in(-\infty,-2]\cup[2,\infty)$ and the above equation becomes $s^3+(\xi/3-2)s=0$, which means that $s=0$ or $s^2=2-\xi/3$. While $s^2\in(4/3,4)$ as $\xi\in(-6,-2)$, which contradicts the fact that $s^2\geq4$. So there's no phase point on the real axis.
\end{proof}

 \begin{remark}
	The above proof implies that the range for $\xi$ in which there's no phase point on the axis can be extended to ($-6,6$). But as $\xi\in(-2,6)$, $\Lambda$ will always be an empty set, so here we just investigate the case as $\xi\in(-6,-2)$.
\end{remark}

We then define a sufficiently small angle $\theta_0>0$ so that the cone  $\Big\{z\in\mathbb{C}: |\frac{\Re z}{z}|>\cos\theta_0\Big\}$ has no intersection with the disks $|z\pm z_k|\leq\rho$ or $|z\pm\bar{z}_k|\leq\rho$. For any $\xi\in(-6,-2)$, let
\begin{equation}
	\phi(\xi)=\min\left\{\theta_0,\frac{1}{2}\arccos\frac{-4-6\xi-|\xi+4|}{12}\right\},
\end{equation}
and $\Omega=\bigcup_{k=1}^4\Omega_k$, where
\begin{align*}
	&\Omega_1=\{z:\arg z\in(0,\phi(\xi))\},\hspace{0.5cm}\Omega_2=\{z:\arg z\in(\pi-\phi(\xi),\pi)\},\\
	&\Omega_3=\{z:\arg z\in(-\pi,-\pi+\phi(\xi))\},\hspace{0.5cm}\Omega_4=\{z:\arg z\in(-\phi(\xi),0)\}.
\end{align*}
Let the boundaries of $\Omega$ be
\begin{align*}
	&\Sigma_1=\e^{\i\phi(\xi)}\mathbb{R}^+,\quad\Sigma_2=\e^{\i(\pi-\phi(\xi))}\mathbb{R}^+,\\
	&\Sigma_3=\e^{-\i(\pi-\phi(\xi))}\mathbb{R}^+,\quad\Sigma_4=\e^{-\i\phi(\xi)}\mathbb{R}^+,
\end{align*}
which can be seen in Figure \ref{Sigma3}.

\begin{figure}
	\centering
	{\includegraphics[width=0.5\linewidth]{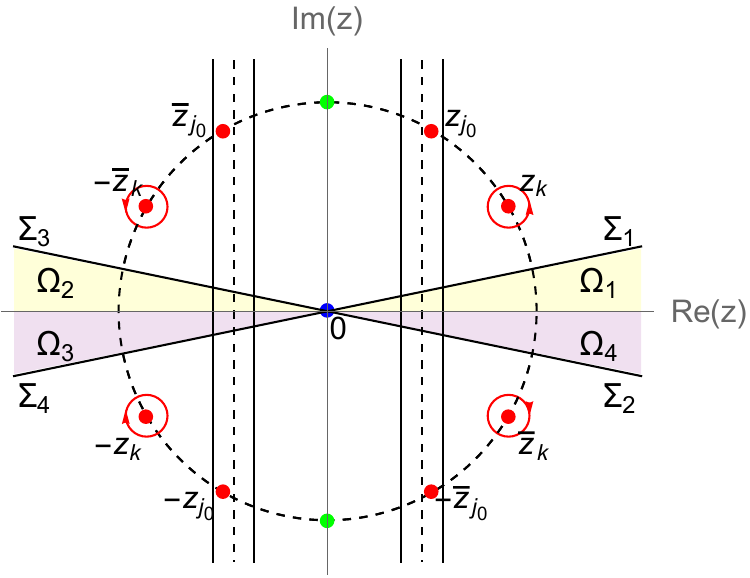}}
	\caption{Find a $\phi(\xi)$ small enough so that there is no pole in the cone and the four rays $\Sigma_k, k=1,2,3,4$ can't intersect with any disks $|z\pm z_k|\leq\rho$ or $|z\pm \bar{z}_k|\leq\rho$.}\label{Sigma3}
\end{figure}

\begin{proposition}
	Let $\xi\doteq\frac{x}{t}$ and $-6<\xi<-2$. Then for $z=|z|\e^{\i\omega}=u+\i v$ and $F(s)=s+s^{-1}$, the phase function $\theta(z;x,t)$ defined in (\ref{theta}) satisfies the following inequalities:
	\begin{align}
		&\Re[2\i t\theta(z;x,t)]\leq -\frac{1}{6}F(|z|)^2t|\sin\omega|(2-|\xi+4|),\quad z\in\Omega_1\cup\Omega_2;\\
		&\Re[2\i t\theta(z;x,t)]\geq \frac{1}{6}F(|z|)^2t|\sin\omega|(2-|\xi+4|),\quad z\in\Omega_3\cup\Omega_4.
	\end{align}
\end{proposition}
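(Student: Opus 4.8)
The plan is to reduce the whole statement, via an explicit polar computation, to one elementary scalar inequality and then to an elementary estimate for a quadratic in $F(|z|)$. First I would put $\theta$ in algebraic form: since $4\lambda(z)^{2}=(z+z^{-1})^{2}$ and $(z-z^{-1})(z+z^{-1})^{2}=(z^{3}-z^{-3})+(z-z^{-1})$, formula (\ref{theta}) gives
\begin{equation}
\theta(z;x,t)=\tfrac12\left(z^{3}-z^{-3}\right)+\tfrac{\xi+3}{2}\left(z-z^{-1}\right),\qquad \xi=\tfrac{x}{t},\nonumber
\end{equation}
which is consistent with the expression for $\theta'(z)$ used earlier. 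Writing $z=|z|e^{i\omega}$ and $F=F(|z|)=|z|+|z|^{-1}\ge2$, and using $|z|^{3}+|z|^{-3}=F(F^{2}-3)$ together with $\sin 3\omega=\sin\omega\,(4\cos^{2}\omega-1)$, one computes $\re(2it\theta)=-2t\,\im\theta$, that is,
\begin{equation}\label{eq:retheta-polar}
\re\!\left[2it\theta(z;x,t)\right]=-tF\sin\omega\left[(F^{2}-3)(4\cos^{2}\omega-1)+\xi+3\right].
\end{equation}
On $|z|=1$ this collapses to $-2t\sin\omega[\xi+2+4\cos^{2}\omega]$, the quantity already used on the unit circle, which is a useful consistency check.

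Next I would observe that all four sectors reduce to one scalar inequality. On $\Omega_{1}\cup\Omega_{2}$ we have $\sin\omega>0$, hence $-tF\sin\omega<0$, so the asserted upper bound is equivalent to
\begin{equation}\label{eq:scalar}
(F^{2}-3)(4\cos^{2}\omega-1)+\xi+3\ \ge\ \tfrac16 F\,(2-|\xi+4|);
\end{equation}
on $\Omega_{3}\cup\Omega_{4}$ we have $\sin\omega<0$, hence $-tF\sin\omega>0$, so the asserted lower bound is \emph{again} equivalent to (\ref{eq:scalar}). (Alternatively, since $\theta$ has real coefficients, $\theta(\bar z;x,t)=\overline{\theta(z;x,t)}$ forces $\re(2it\theta)$ to be odd under $z\mapsto\bar z$, so the $\Omega_{3}\cup\Omega_{4}$ estimate follows directly from the $\Omega_{1}\cup\Omega_{2}$ one together with $\overline{\Omega_{1}\cup\Omega_{2}}=\Omega_{3}\cup\Omega_{4}$.) In each $\Omega_{k}$ one has $\cos^{2}\omega>\cos^{2}\phi(\xi)$, because $\arg z$ ranges over one of $(0,\phi(\xi))$, $(\pi-\phi(\xi),\pi)$, $(-\pi,-\pi+\phi(\xi))$, $(-\phi(\xi),0)$, on each of which $\cos^{2}$ is strictly monotone (all these arcs stay away from $\pi/2$). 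Since $F^{2}-3\ge1>0$, it suffices to prove (\ref{eq:scalar}) with $\cos^{2}\omega$ replaced by $\cos^{2}\phi(\xi)$. By the definition of $\phi(\xi)$, $\phi(\xi)\le\tfrac12\arccos\frac{-4-6\xi-|\xi+4|}{12}$, so $\cos 2\phi(\xi)\ge\frac{-4-6\xi-|\xi+4|}{12}$ and therefore
\begin{equation}
4\cos^{2}\phi(\xi)-1=1+2\cos 2\phi(\xi)\ \ge\ \frac{2-6\xi-|\xi+4|}{6}\ >\ 0\nonumber
\end{equation}
on $-6<\xi<-2$ (positive since $-6\xi>12$ there).

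Substituting this lower bound into (\ref{eq:scalar}) reduces the whole claim to the one-variable inequality
\begin{equation}
P(F):=(2-6\xi-|\xi+4|)F^{2}-(2-|\xi+4|)F+24\xi+12+3|\xi+4|\ \ge\ 0,\qquad F\ge2.\nonumber
\end{equation}
This is an upward parabola in $F$, and a short computation gives $P(2)=16+|\xi+4|>0$ and $P'(2)=6-24\xi-3|\xi+4|>0$ on $(-6,-2)$; since $P'$ is increasing, $P$ is nondecreasing on $[2,\infty)$, so $P(F)\ge P(2)>0$ there. Unwinding (\ref{eq:scalar}) and then (\ref{eq:retheta-polar}) yields both stated bounds.

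The real content is in the calibration of the opening angle $\phi(\xi)$ rather than in any single estimate: $\phi(\xi)$ must be small enough both to keep the sectors $\Omega_{k}$ off the fixed discs $|z-z_{k}|\le\rho$ around the discrete spectrum (the role of the $\min$ with $\theta_{0}$) \emph{and} to force $4\cos^{2}\phi(\xi)-1$ above the threshold $\tfrac16(2-6\xi-|\xi+4|)$ that makes $P(F)\ge0$ hold uniformly over the whole window $-6<\xi<-2$; the constant $\tfrac16$ in the statement is exactly what this threshold produces. The only slightly fiddly bookkeeping is carrying the two branches of $|\xi+4|$ (according as $\xi<-4$ or $\xi\ge-4$) through the simplifications leading to $P(F)$, and verifying $P(F)\ge0$; everything else is the polar identity (\ref{eq:retheta-polar}) and the monotonicity of $\cos^{2}$ on the relevant arcs.
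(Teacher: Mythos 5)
Your polar identity $\re[2it\theta]=-tF\sin\omega\,[(F^{2}-3)(4\cos^{2}\omega-1)+\xi+3]$ is correct (it passes the unit-circle check, and indeed the bracket displayed in the paper's own proof differs from it by $4\sin^{2}\omega$), and the reduction of all four sectors to the single scalar inequality $(F^{2}-3)(4\cos^{2}\omega-1)+\xi+3\ge\tfrac16F(2-|\xi+4|)$, the bound $\cos^{2}\omega>\cos^{2}\phi(\xi)$ on each $\Omega_{k}$, and the final monotonicity argument $P(F)\ge P(2)=16+|\xi+4|>0$ are all sound. The gap is the bridge step: the inequality $4\cos^{2}\phi(\xi)-1\ge\frac{2-6\xi-|\xi+4|}{6}$ that you read off from the definition of $\phi(\xi)$ cannot hold on most of the window. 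The left-hand side never exceeds $3$, while the right-hand side exceeds $3$ precisely when $\xi<-20/7$ (e.g.\ at $\xi=-4$ it equals $13/3$, at $\xi=-5$ about $5.17$); correspondingly the argument $\frac{-4-6\xi-|\xi+4|}{12}$ of the arccos in the paper's definition of $\phi(\xi)$ is then larger than $1$, so the arccos is undefined and the implication ``$2\phi\le\arccos(\cdot)\Rightarrow\cos2\phi\ge(\cdot)$'' yields nothing. As written, your chain establishes the proposition only for roughly $\xi\in[-20/7,-2)$, not on all of $(-6,-2)$.

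What is actually needed, in your notation, is only $4\cos^{2}\phi(\xi)-1\ge\sup_{F\ge2}g(F)$ with $g(F)=\frac{F(2-|\xi+4|)-6(\xi+3)}{6(F^{2}-3)}$, and this supremum is strictly below $3$ on the whole window: $g(F)<3$ for $F\ge2$ is equivalent to $18F^{2}-(2-|\xi+4|)F+6\xi-36>0$, whose minimum on $[2,\infty)$ is attained at $F=2$ and equals $32+6\xi+2|\xi+4|$, which is positive for $-6<\xi<-2$ (it equals $24+4\xi$ for $\xi\le-4$ and $40+8\xi$ for $\xi\ge-4$). So the estimate does hold once $\phi(\xi)$ is small enough relative to this threshold, which is exactly the calibration you identify as the real content but do not actually carry out: to close the proof, replace $\frac{2-6\xi-|\xi+4|}{6}$ by $\sup_{F\ge2}g(F)$ and impose $\sin^{2}\phi(\xi)\le\frac14\bigl(3-\sup_{F\ge2}g(F)\bigr)$. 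Two caveats are worth recording. First, since $g(2)\to3$ as $\xi\to-6^{+}$, the admissible opening angle must shrink to zero near $\xi=-6$, so a fixed $\theta_{0}$ cannot suffice there and the constant in the statement is not obtainable uniformly from the paper's displayed $\phi(\xi)$ either. Second, the defect is partly inherited: the paper's own proof silently assumes the corresponding pointwise inequality without deriving it from the definition of $\phi(\xi)$, and its formula for $\phi(\xi)$ is ill-posed on part of the range; but as a standalone argument yours does not establish the claim on $(-6,-20/7)$.
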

\begin{proof}
	We just prove when $z\in\Omega_1$. We can calculate from ($\ref{theta}$), for $z=|z|\e^{\i\omega}$,
	\begin{align*}
		\Re(2\i\theta)=-F(|z|)\sin\omega[\xi+(F(|z|)^2-2)(1+2\cos2\omega)].
	\end{align*}
	Note that $F(|z|)\geq2$, so we have
	\begin{align*}
		\Re(2i\theta)&\leq-F(|z|)|\sin\omega|\left[\xi+(F(|z|)^2-2)\frac{F(|z|)(2-|\xi+4|)-6\xi}{6(F(|z|)^2-2)}\right]\\
		&=-\frac{1}{6}F(|z|)^2|\sin\omega|(2-|\xi+4|).
	\end{align*}
\end{proof}
\begin{proposition}
 Define the functions $R_j:\overline{\Omega}_j\cup\overline{\Omega}_{j+1}\rightarrow\mathbb{C}$, $j=1,2$, satisfying the following boundary conditions,
	\begin{align*}
		\left\{
		\begin{aligned}
			&R_1(z)=\frac{\bar{r}(z)}{1-|r(z)|^2}T_+^{-2}(z),& &z\in\mathbb{R};\\
			&R_1(z)=0, & & z\in\Sigma_1\cup\Sigma_2;
		\end{aligned}\right.\hspace{0.5cm}\left\{
		\begin{aligned}
			&R_2(z)=\frac{r(z)}{1-|r(z)|^2}T_-^{2}(z),& &z\in\mathbb{R};\\
			&R_2(z)=0, & & z\in\Sigma_3\cup\Sigma_4.
		\end{aligned}\right.
	\end{align*}
	For a fixed constant $c_1(q_0)$ and a fixed cutoff function $\varphi\in C_0^{\infty}(\mathbb{R}, [0,1])$, we have the following estimates
	\begin{align}
		&|\bar{\partial}R_j(z)|\leq c_1|z|^{-1/2}+c_1|r'(|z|)|+c_1\varphi(|z|),\hspace{0.5cm}z\in\Omega_i, \hspace{0.5cm}i=1,2,3,4, j=1,2;\label{dbarR1}\\
		&|\bar{\partial}R_j(z)|\leq c_1|z-1|,\hspace{0.5cm}z\in\Omega_1,\Omega_4;  \qquad  |\bar{\partial}R_j(z)|\leq c_1|z+1|,\hspace{0.5cm}z\in\Omega_2,\Omega_3.\label{dbarR2}
	\end{align}
Extending $R$ by $R(z)|_{z\in\Omega_j\cup\Omega_{j+1}}=R_j(z)$, such that the symmetry $R(z)=-\overline{R(\bar{z}^{-1})}$ holds.
\end{proposition}
\begin{proof}
	We just prove the case for $R_1$ in $\overline{\Omega}_1$. The proof for other cases is just an analogue.
	
	\noindent From (\ref{scatteringdata1}) and (\ref{r1}), $z=\pm1$ are singularites of the scattering coefficients $a(z)$ and $b(z)$, which  implies that $z=1$ is a singularity of $R_1(z)$. But one can eliminate this singularity by using $T(z)^{-2}$. From (\ref{r<1}) and (\ref{jumpofT}) we have
	\begin{equation}\label{R1}
		\frac{\bar{r}(z)}{1-|r(z)|^2}T_+(z)^{-2}=\overline{\frac{b(z)}{a(z)}}\left(\frac{a(z)}{T_+(z)}\right)^2\doteq\overline{\frac{J_b(z)}{J_a(z)}}\left(\frac{a(z)}{T_+(z)}\right)^2,
	\end{equation}
	where
	\begin{equation}
		J_b(z)=\det[\psi_1^-(z;x,t),\psi_1^+(z;x,t)]\hspace{0.5cm}J_a(z)=\det[\psi_1^+(z;x,t),\psi_2^-(z;x,t)].
	\end{equation}
    Since in the scattering problem, $X$ is traceless, we can then derive that the determinants of the Jost functions $\psi_j^{\pm}(z;x,t)$, $j=1,2$, are independent of $x$.
    The analyticity of the denominator in the r.h.s. of (\ref{R1}) can be obtained owing to Proposition \ref{analydiff} and \ref{propertiesofT}.
	
	We then introduce the cutoff functions $\chi_0$, $\chi_1$ $\in C_0^{\infty}(\mathbb{R},[0,1])$ with small support near $0$ and $1$ respectively. For any sufficiently small $s\in\mathbb{R}$, $1=\chi_0(s)=\chi_1(s+1)$. Moreover, defining $\chi_1(s)=\chi(s^{-1})$ ensures the symmetry which will be useful in the following content. Then we can rewrite $R_1(z)$ in $\mathbb{R}$ as $R_1(z)=R_{11}(z)+R_{12}(z)$ satisfying
	\begin{equation}\label{decompositionofR1}
		R_{11}(z)=(1-\chi_1(z))\frac{\bar{r}(z)}{1-|r(z)|^2}T_+(z)^{-2},\hspace{0.5cm}R_{12}(z)=\chi_1(z)\overline{\frac{J_b(z)}{J_a(z)}}\left(\frac{a(z)}{T_+(z)}\right)^2.
	\end{equation}
	For a fixed small $\delta_0>0$, extending the function $R_{11}(z)$ and $R_{12}(z)$ by
	\begin{align}
		&R_{11}(z)=(1-\chi_1(|z|))\frac{\bar{r}(|z|)}{1-|r(|z|)|^2}T_+(z)^{-2}\cos(k\arg z),\label{R11}\\
		&R_{12}(z)=f(|z|)g(z)\cos(k\arg z)+\frac{\i|z|}{k}\chi_0(\frac{\arg z}{\delta_0})f'(|z|)g(z)\sin(k\arg z),\label{R12}
	\end{align}
	where $f'(s)$ denotes the derivative of $f(s)$ and
	\begin{equation}
		k\doteq\frac{\pi}{2\theta_0},\hspace{0.5cm}g(z)\doteq\left(\frac{a(z)}{T(z)}\right)^2,\hspace{0.5cm}f(s)\doteq\chi_1(s)\overline{\frac{J_b(s)}{J_a(s)}}.\nonumber
	\end{equation}
	Direct calculation shows that $R_1$ defined in this way satisfies the symmetry  $R_1(s)=-\overline{R_1(\bar{s}^{-1})}$.
	
	Now we give the estimates of the $\bar{\partial}$-derivatives of \eqref{R11}-\eqref{R12}. For  $R_{11}$, we have
	\begin{equation}
		\bar{\partial}R_{11}(z)=-\frac{\bar{\partial}\chi_1(|z|)}{T(z)^2}\frac{\overline{r(|z|)}\cos(k\arg z)}{1-|r(|z|)|^2}+\frac{1-\chi_1(|z|)}{T(z)^2}\bar{\partial}\left(\frac{\overline{r(|z|)}\cos(k\arg z)}{1-|r(|z|)|^2}\right).
	\end{equation}
	Note that for the fixed constants $C$ and $c$, $1-|r(z)|^2>c>0$ as $z\in$ supp$(1-\chi_1(|z|))$ and $|T(z)^{-2}|\leq C$ as $z\in\Omega_1\cap$ supp$(1-\chi_1(|z|))$. For $z=u+\i v=\rho \e^{\i\alpha}$, we have $\bar{\partial}=\frac{1}{2}(\partial_u+\i\partial_v)=\frac{\e^{\i\alpha}}{2}(\partial_{\rho}+\frac{\i}{\rho}\partial_{\alpha})$. As $T(z)$ and $g(z)$ are analytic in $\Omega_1$, it follows that
	\begin{equation}
		\left|\frac{\bar{\partial}\chi_1(|z|)}{T(z)^2}\frac{\overline{r(|z|)}\cos(k\arg z)}{1-|r(|z|)|^2}\right|=\left|\frac{\frac{1}{2}\e^{\i\alpha}\chi_1'\bar{r}\cos(k\alpha)}{T(z)^2(1-|r(|z|)|^2)}\right|\leq c_1\varphi(|z|),
	\end{equation}
	for some $\varphi\in C_0^{\infty}(\mathbb{R},[0,1])$ with a small support near $1$ and with $\varphi=1$ on supp$\chi_1$. Using $r(0)=0$ and $r(z)\in H^1(\mathbb{R})$, we have $|r(|z|)|\leq|z|^{1/2}\|r'\|_{L^2(\mathbb{R})}$, then for some fixed constants $C_2$ and $C_3$, we have
	\begin{align*}
		\left|\frac{1-\chi_1(|z|)}{T(z)^2}\bar{\partial}\left(\frac{\overline{r(|z|)}\cos(k\arg z)}{1-|r(|z|)|^2}\right)\right|&=\left|\frac{1-\chi_1(|z|)}{T(z)^2(1-|r(|z|)|^2)}\right|\left|\frac{1}{2}\e^{\i\alpha}(\bar{r}'\cos(k\alpha)-\i k\bar{r}|z|^{-1}\sin(k\alpha))(1-|r(|z|)|^2)\right.\\
		&\left.+\frac{1}{2}\e^{\i\alpha}(r'\bar{r}+\bar{r}'r)\bar{r}\cos(k\alpha)\right|\\
		&\leq C_2|r'(z)|+C_3\frac{|r(z)|}{|z|}\leq C_2|r'(z)|+C_3|z|^{-1/2}.
	\end{align*}
	So we get the estimation for $\bar{\partial}R_{11}(z)$
	\begin{equation*}
		|\bar{\partial}R_{11}(z)|\leq c_1\varphi(|z|)+c_2|r'(z)|+c_3|z|^{-1/2}.
	\end{equation*}
	Now we estimate $|\bar{\partial}R_{12}(z)|$. We have
	\begin{align*}
		\bar{\partial}R_{12}(z)&=\frac{1}{2}\e^{\i\alpha}g(z)\left[f'cos(k\alpha)(1-\chi_0(\frac{\alpha}{\delta_0}))-\frac{\i kf(\rho)}{\rho}\sin(k\alpha)\right.\\
		&\left.+\frac{\i}{k}(\rho f'(\rho))'\sin(k\alpha)\chi_0(\frac{\alpha}{\delta_0})+\frac{\i}{k\delta_0}f'(\rho)\sin(k\alpha)\chi_0'(\frac{\alpha}{\delta_0})\right],
	\end{align*}
	in which $g(z)$ is bounded. So we can state that $|\bar{\partial}R_{12}(z)|\leq c_4\varphi(|z|)$ for a $\varphi\in C_0^{\infty}[\mathbb{R},[0,1]]$ supported near 1 and for a constant $c_4$, thus yielding (\ref{dbarR1}).
	
	Eventually, as $z\rightarrow1$, we have
	\begin{equation*}
		|\bar{\partial}R_{12}(z)|\leq[\sin(k\alpha)+(1-\chi_0(\alpha/\delta_0))]=\mathcal{O}(\alpha),
	\end{equation*}
	so (\ref{dbarR2}) follows immediately.
\end{proof}

Now we define the modified factorization on $\mathbb{R}$ as $V^{(1)}(z)=\widehat{B}(z)\widehat{B}^{-\dag}(z)$,
where
\begin{equation*}
	\widehat{B}(z)=\begin{pmatrix}
		1 & 0\\
		R_2\e^{-2\i t\theta} & 1
	\end{pmatrix},\qquad \widehat{B}^{\dag}(z)=\begin{pmatrix}
		1 & R_1\e^{2\i t\theta}\\
		0 & 1
	\end{pmatrix}.\label{B}
\end{equation*}
Using (\ref{B}), we define $m^{(2)}(z)$ to open the lenses:
\begin{equation}
	m^{(2)}(z)=\left\{\begin{aligned}
		&m^{(1)}(z)\widehat{B}^{\dag}(z), & &z\in\Omega_1\cup\Omega_2;\\
		&m^{(1)}(z)\widehat{B}(z), & &z\in\Omega_3\cup\Omega_4;\\
		&m^{(1)}(z), & &z\in\mathbb{C}\setminus\bar{\Omega}.
	\end{aligned}\right.
\end{equation}
Let
\begin{equation}\label{contour: Sigma2}
	\Sigma^{(2)}=\bigcup_{j\in H\setminus\Lambda}\left\{z\in\mathbb{C}:|z\pm z_j|=\rho\text{ or }|z\pm\bar{z}_j|=\rho\right\}.
\end{equation}
Then $m^{(2)}(z)$ satisfies the following $\bar{\partial}$-RH problem.

\begin{RHP}\label{m2}
	Find a $2\times2$ matrix-valued function $m^{(2)}(z)=m^{(2)}(z;x,t)$ such that
	\begin{enumerate}
		\item $m^{(2)}(z)$ is continuous in $\mathbb{C}\setminus(\Sigma^{(2)}\cup \{ z_{j_0}\})$, with continuous boundary values $m^{(2)}_+(z)$ and $m^{(2)}_-(z)$ on $\Sigma^{(2)}$ from the left and right, respectively.
		\item $m^{(2)}(z)$ has the following asymptotics:
		\begin{align}
			m^{(2)}(z)=I+\mathcal{O}(z^{-1}), \quad z\rightarrow\infty;\qquad zm^{(2)}(z)=\sigma_2+\mathcal{O}(z), \quad z\rightarrow0.
		\end{align}
		\item $m^{(2)}(z)$ satisfies the following jump relation
		$$m^{(2)}_+(z)=m^{(2)}_-(z)V^{(2)}(z),$$
		where
		 \begin{itemize}
			\item for $j\in\triangle\setminus\Lambda$,
			\begin{equation}
				V^{(2)}(z)=\left\{\begin{aligned}
					&\begin{pmatrix}
						1 & -\frac{(z-z_j)\e^{2\i t\theta(z_j)}}{c_j}T^{-2}(z)\\
						0 & 1
					\end{pmatrix},&&|z-z_j|=\rho, \\
					&\begin{pmatrix}
						1 & \frac{(z+\bar{z}_j)\e^{2\i t\theta(z_j)}}{\bar{c}_j}T^{-2}(z)\\
						0 & 1
					\end{pmatrix},& &|z+\bar{z}_j|=\rho,\\
					&\begin{pmatrix}
						1 & 0\\
						-\frac{(z-\bar{z}_j)\e^{-2\i t\theta(\bar{z}_j)}}{\bar{c}_j}T^2(z) & 1
					\end{pmatrix}, & &|z-\bar{z}_j|=\rho, \\
					&\begin{pmatrix}
						1 & 0\\
						\frac{(z+z_j)\e^{-2\i t\theta(\bar{z}_j)}}{c_j}T^2(z) & 1
					\end{pmatrix}, & &|z+z_j|=\rho,
				\end{aligned}\right.
			\end{equation}\label{V21}
			\item for $j\in\nabla\setminus\Lambda$,
			\begin{equation}
				V^{(2)}(z)=\left\{\begin{aligned}
					&\begin{pmatrix}
						1 & 0\\
						-\frac{c_j\e^{-2\i t\theta(z_j)}}{z-z_j}T^2(z) & 1
					\end{pmatrix}, & &|z-z_j|=\rho, \\
					&\begin{pmatrix}
						1 & 0\\
						\frac{\bar{c}_j\e^{-2\i t\theta(z_j)}}{z+\bar{z}_j}T^2(z) & 1
					\end{pmatrix}, & &|z+\bar{z}_j|=\rho, \\
					&\begin{pmatrix}
						1 & -\frac{\bar{c}_j\e^{2\i t\theta(\bar{z}_j)}}{z-\bar{z}_j}T^{-2}(z)\\
						0 & 1
					\end{pmatrix}, & &|z-\bar{z}_j|=\rho,\\
					&\begin{pmatrix}
						1 & \frac{c_j\e^{2\i t\theta(\bar{z}_j)}}{z+z_j}T^{-2}(z)\\
						0 & 1
					\end{pmatrix}, & &|z+z_j|=\rho.
				\end{aligned}\right.
			\end{equation}\label{V22}
		\end{itemize}
		
		\item For $z\in \mathbb{C}\setminus(\Sigma^{(2)}\cup \{ z_{j_0}\})$, we have:
		\begin{equation}
			\bar{\partial}m^{(2)}(z)=m^{(2)}(z)W(z),
		\end{equation}
	where
	\begin{equation}\label{W}
		W(z)=\left\{\begin{aligned}
			&\bar{\partial}\widehat{B}^{\dag}(z)=\begin{pmatrix}
				0 & \bar{\partial}R_1\e^{2\i t\theta}\\
				0 & 0
			\end{pmatrix}, & &z\in\Omega_1\cup\Omega_2\\
			&\bar{\partial}\widehat{B}(z)=\begin{pmatrix}
				0 & 0\\
				\bar{\partial}R_2e^{-2it\theta} & 0
			\end{pmatrix}, & &z\in\Omega_3\cup\Omega_4\\
			&0 & &\text{elsewhere}.
		\end{aligned}\right.
	\end{equation}
	\item  If $\Lambda=\emptyset$, then $m^{(2)}(z)$ is analytic in $\mathbb{C}\setminus(\bar{\Omega}\cup\Sigma^{(2)})$. If there exists $j_0\in\{0,1,...,N-1\}$ such that $|\Re z_{j_0}-\xi_0|\leq\rho$,  then $m^{(2)}(z)$ is meromorphic in $\mathbb{C}\setminus(\overline{\Omega}\cup\Sigma^{(2)})$ with  four simple poles $\pm z_{j_0}$ and $\pm\bar{z}_{j_0}$, satisfying the following residue conditions:
	    \begin{enumerate}
		\item If $j_0\in\triangle$, denoting $C_{j_0}=c_{j_0}^{-1}T'(z_{j_0})^{-2}$, we have
		\begin{align}\label{resm21}
			\begin{split}
				&\Res_{z=z_{j_0}}m^{(2)}(z)=\lim_{z\rightarrow z_{j_0}}m^{(2)}(z\begin{pmatrix}
					0 & C_{j_0}\e^{2\i t\theta(z_{j_0})}\\
					0 & 0
				\end{pmatrix},\quad
			\Res_{z=-z_{j_0}}m^{(2)}(z)=\lim_{z\rightarrow-z_{j_0}}m^{(2)}(z)\begin{pmatrix}
				0 & 0\\
				-C_{j_0}\e^{2\i t\theta(z_{j_0})} & 0
			\end{pmatrix},\\
				&\Res_{z=-\bar{z}_{j_0}}m^{(2)}(z)=\lim_{z\rightarrow-\bar{z}_{j_0}}m^{(2)}(z)\begin{pmatrix}
					0 & -\bar{C}_{j_0}\e^{2\i t\theta(z_{j_0})}\\
					0 & 0
				\end{pmatrix},\quad
			\Res_{z=\bar{z}_{j_0}}m^{(2)}(z)=\lim_{z\rightarrow\bar{z}_{j_0}}m^{(2)}(z)\begin{pmatrix}
				0 & 0\\
				\bar{C}_{j_0}\e^{2\i t\theta(z_{j_0})} & 0
			\end{pmatrix}.
			\end{split}
		\end{align}
		\item If $j_0\in\nabla$, denoting $C_{j_0}=c_{j_0}T(z_{j_0})^2$, we have
		\begin{align}\label{resm22}
			\begin{split}
				&\Res_{z=z_{j_0}}m^{(2)}(z)=\lim_{z\rightarrow z_{j_0}}m^{(2)}(z)\begin{pmatrix}
					0 & 0\\
					C_{j_0}\e^{-2\i t\theta(z_{j_0})} & 0
				\end{pmatrix},\quad \Res_{z=-z_{j_0}}m^{(2)}(z)=\lim_{z\rightarrow-z_{j_0}}m^{(2)}(z))\begin{pmatrix}
				0 & -C_{j_0}\e^{-2\i t\theta(z_{j_0})}\\
				0 & 0
			\end{pmatrix}.\\
				&\Res_{z=\bar{z}_{j_0}}m^{(2)}(z)=\lim_{z\rightarrow\bar{z}_{j_0}}m^{(2)}(z))\begin{pmatrix}
					0 & \bar{C}_{j_0}\e^{-2\i t\theta(z_{j_0})}\\
					0 & 0
				\end{pmatrix},\quad
				\Res_{z=-\bar{z}_{j_0}}m^{(2)}(z)=\lim_{z\rightarrow-\bar{z}_{j_0}}m^{(2)}(z))\begin{pmatrix}
					0 & 0\\
					-\bar{C}_{j_0}\e^{-2\i t\theta(z_{j_0})} & 0
				\end{pmatrix}.
			\end{split}
		\end{align}
	\end{enumerate}
\end{enumerate}
\end{RHP}
\section{The large-time asymptotic analysis}\label{sec:LTA}

\subsection{Asymptotics of $N$-soliton solution}

 We will neglect the $\bar{\partial}$ component of the solution, then the remaining is a new
RH problem with zero $\bar{\partial}$-derivatives in $\Omega$. After that, the small norm theory can be used to analyze the rest problem.

\begin{proposition}
	Let $m^{(sol)}(z)$ represent the new RH problem derived by excluding 
	the $\bar{\partial}$ component of RH problem \ref{m2}. Specifically, $m^{(sol)}(z)$ is the solution to
	$\bar{\partial}$-RH problem \ref{m2} with $W\equiv0$.
	For scattering data $\left\{r(z),\{z_j,c_j\}_{j=0}^{N-1}\right\}$ in RH problem \ref{m2}, 
	$m^{(sol)}(z)$  is equivalent to RH problem \ref{rhpm} with the modified reflectionless scattering data $\{0,\{z_j,\tilde{c}_j\}_{j=0}^{N-1}\}$. Here, the modified connection coefficients $\tilde{c}_j$ are determined by
	\begin{equation}\label{cj1}
		\tilde{c}_j=c_j(x,t)\exp\left(-\frac{1}{\i\pi}\int_{\mathbb{R}}\log(1-|r(s)|^2)(\frac{1}{s-z_j}-\frac{1}{2s})ds\right).
	\end{equation}
\end{proposition}
\begin{proof}
	When $W\equiv0$, the $\bar{\partial}$-RH problem for $m^{(sol)}(z)$ becomes a new RH problem with jump contour $\Sigma^{(2)}$. The next transformation aims to map each circle in $\Sigma^{(2)}$ back to the corresponding poles. This is done to ensure that $\tilde{m}(z)$ possesses simple poles at each $\pm z_k $ or $\pm\bar{z}_k$ in $\mathcal{Z}$. Additionally, the transformation reverses the triangularity induced by (\ref{T}) and (\ref{m1}):
	\begin{equation}
		\tilde{m}(z)=\left[\prod_{k\in\triangle}\left(-|z_k|^2\right)\right]^{\sigma_3}m^{(sol)}(z)F(z)\left[\prod_{k\in\triangle}\frac{(z-z_k)(z+\bar{z}_k)}{(zz_k-1)(z\bar{z}_k+1)}\right]^{-\sigma_3},
	\end{equation}
	where
	\begin{itemize}
		\item for $j\in\triangle\setminus\Lambda$,
		\begin{equation}
			F(z)=\left\{\begin{aligned}
				&\begin{pmatrix}
					1 & \frac{(z-z_j)\e^{2\i t\theta(z_j)}}{c_j}T^{-2}(z)\\
					0 & 1
				\end{pmatrix},&&|z-z_j|<\rho, \\
				&\begin{pmatrix}
					1 & -\frac{(z+\bar{z}_j)\e^{2\i t\theta(z_j)}}{\bar{c}_j}T^{-2}(z)\\
					0 & 1
				\end{pmatrix},& &|z+\bar{z}_j|<\rho,\\
				&\begin{pmatrix}
					1 & 0\\
					\frac{(z-\bar{z}_j)\e^{-2\i t\theta(\bar{z}_j)}}{\bar{c}_j}T^2(z) & 1
				\end{pmatrix}, & &|z-\bar{z}_j|<\rho, \\
				&\begin{pmatrix}
					1 & 0\\
					-\frac{(z+z_j)\e^{-2\i t\theta(\bar{z}_j)}}{c_j}T^2(z) & 1
				\end{pmatrix}, & &|z+z_j|<\rho,
			\end{aligned}\right.
		\end{equation}
		\item for $j\in\nabla\setminus\Lambda$,
		\begin{equation}
			 F(z)=\left\{\begin{aligned}
				&\begin{pmatrix}
					1 & 0\\
					\frac{c_je^{-2it\theta(z_j)}}{z-z_j}T^2(z) & 1
				\end{pmatrix}, & &|z-z_j|<\rho, \\
				&\begin{pmatrix}
					1 & 0\\
					-\frac{\bar{c}_j\e^{-2\i t\theta(z_j)}}{z+\bar{z}_j}T^2(z) & 1
				\end{pmatrix}, & &|z+\bar{z}_j|<\rho, \\
				&\begin{pmatrix}
					1 & \frac{\bar{c}_j\e^{2\i t\theta(\bar{z}_j)}}{z-\bar{z}_j}T^{-2}(z)\\
					0 & 1
				\end{pmatrix}, & &|z-\bar{z}_j|<\rho,\\
				&\begin{pmatrix}
					1 & -\frac{c_j\e^{2\i t\theta(\bar{z}_j)}}{z+z_j}T^{-2}(z)\\
					0 & 1
				\end{pmatrix}, & &|z+z_j|<\rho.
			\end{aligned}\right.
		\end{equation}
	\item Elsewhere, $F(z)=I$.
\end{itemize}
Note that there are some remarkable properties for $\widetilde{m}$: (i) $\widetilde{m}$ keeps the normalization conditions as $z\to0$ and $z\to\infty$;
(ii) the jump around the simple poles no longer exists;
(iii) $\widetilde{m}$ satisfies the residue conditions (\ref{resm}), with the modified connection coefficients $\widetilde{c}_j$.
Therefore, we verified that $\widetilde{m}$ is the solution of RH problem \ref{rhpm} with the reflectionless scattering data $\{0,\{z_k,\tilde{c}_k\}_{k=0}^{N-1}\}$.
The symmetry $r(s^{-1})=-\overline{r(s)}$, $s\in\mathbb{R}$, gives the modified connection coefficients $\tilde{c}_j=z_j|\tilde{c}_j|$.
In a word, $m^{(sol)}(z)$ is the solution of RH problem \ref{rhpm}, with a $N$-soliton, reflectionless, potential $\tilde{q}(x,t)$, and the discrete spectrum $\mathcal{Z}$, but with the modified connection coefficients $\tilde{c}_j$.
\end{proof}

\begin{proposition}
	Let $\xi=\frac{x}{t}$ and $j_0=j_0(\xi)\in\{-1,0,1,...,N-1\}$. Assume that $m^{\Lambda}(z)$ solves RH problem \ref{m2} with  $W(z)\equiv0$ and $V^{(2)}\equiv I$.
	Then there exists an  unique solution $m^{\Lambda}(z)$ to the above RH problem as follows:
	\begin{itemize}
		\item if $j_0(\xi)=-1$, corresponding to $\Lambda=\varnothing$, then all the discrete eigenvalues $\pm z_j$ are away from the critical lines. Moreover, 
		\begin{equation}\label{mL1}
			m^{\Lambda}(z)=I+\frac{\sigma_2}{z};
		\end{equation}
		\item if $j_0(\xi)\in\nabla$, then
		\begin{align}
			&m^{\Lambda}(z)=I+\frac{\sigma_2}{z}+\begin{pmatrix}
				\frac{\alpha_{j_0}^{\nabla}(x,t)}{z-z_{j_0}}-\frac{\sigma\bar{\alpha}_{j_0}^{\nabla}(x,t)}{z+\bar{z}_{j_0}} & \frac{\bar{\beta}_{j_0}^{\nabla}(x,t)}{z-\bar{z}_{j_0}}-\frac{\sigma\beta_{j_0}^{\nabla}(x,t)}{z+z_{j_0}}\\
				\frac{\beta_{j_0}^{\nabla}(x,t)}{z-z_{j_0}}-\frac{\sigma\bar{\beta}_{j_0}^{\nabla}(x,t)}{z+\bar{z}_{j_0}} & \frac{\bar{\alpha}_{j_0}^{\nabla}(x,t)}{z-\bar{z}_{j_0}}-\frac{\sigma\alpha_{j_0}^{\nabla}(x,t)}{z+z_{j_0}}\\
			\end{pmatrix},\nonumber\\
			&\alpha_{j_0}^{\nabla}(x,t)=-\i z_{j_0}\bar{\beta}_{j_0}^{\nabla},\nonumber\\
			&\beta_{j_0}^{\nabla}(x,t)=\left\{\begin{aligned}
				&\frac{\sin\theta_{j_0}(1+\tanh\varphi_{j_0})(z_{j_0}\sech\varphi_{j_0}-\frac{1}{2}\cos\theta_{j_0}(1+\tanh\varphi_{j_0}))}
				{\left(\frac{1}{2}(1+\tanh\varphi_{j_0})\cos\theta_{j_0}-\sec\theta_{j_0}\sech\varphi_{j_0}\right)^2+\tan^2\theta_{j_0}\sech^2\varphi_{j_0}},& &\sigma=0\\
				&\frac{\i(1+\tanh\varphi_{j_0})}{\sech\varphi_{j_0}-2(1+\tanh\varphi_{j_0})},& &\sigma=1
			\end{aligned}\right.\label{betanabla}
		\end{align}
		where $\theta_{j_0}=\arg z_{j_0}$ and  when $\sigma=1$, $z_{j_0}\neq \i$ and when $\sigma=0$, $z_{j_0}=\i$.
		\item if $j_0(\xi)\in\triangle$, then
		\begin{align}
			&m^{\Lambda}(z)=I+\frac{\sigma_2}{z}+\begin{pmatrix}
				\frac{\alpha_{j_0}^{\triangle}(x,t)}{z-\bar{z}_{j_0}}-\frac{\bar{\alpha}_{j_0}^{\triangle}(x,t)}{z+z_{j_0}} & \frac{\bar{\beta}_{j_0}^{\triangle}(x,t)}{z-z_{j_0}}-\frac{\beta_{j_0}^{\triangle}(x,t)}{z+\bar{z}_{j_0}}\\
				\frac{\beta_{j_0}^{\triangle}(x,t)}{z-\bar{z}_{j_0}}-\frac{\bar{\beta}_{j_0}^{\triangle}(x,t)}{z+z_{j_0}} & \frac{\bar{\alpha}_{j_0}^{\triangle}(x,t)}{z-z_{j_0}}-\frac{\alpha_{j_0}^{\triangle}(x,t)}{z+\bar{z}_{j_0}}\\
			\end{pmatrix},\nonumber\\
			&\alpha_{j_0}^{\triangle}(x,t)=\i\bar{z}_{j_0}\bar{\beta}_{j_0},\nonumber\\
			&\beta_{j_0}^{\triangle}(x,t)=\frac{\sin\theta_{j_0}(1-\tanh\varphi_{j_0})(\bar{z}_{j_0}\sech\varphi_{j_0}-\frac{1}{2}
				\cos\theta_{j_0}(1-\tanh\varphi_{j_0}))}{\left(\frac{1}{2}(1-\tanh\varphi_{j_0})\cos\theta_{j_0}-\sec\theta_{j_0}
				\sech\varphi_{j_0}\right)^2+\tan^2\theta_{j_0}\sech^2\varphi_{j_0}}.\label{betatriangle}
		\end{align}
	\end{itemize}
In case $2$ and $3$, the real phase $\varphi_{j_0}$ is given by
\begin{align}
	&\varphi_{j_0}=2\Im z_{j_0}(x+(4(\Re z_{j_0})^2+2)t+x_{j_0}),\\
	&x_{j_0}=\frac{1}{2\Im z_{j_0}}\left\{\log\left(\frac{|c_{j_0}|}{\Im z_{j_0}}\prod_{k\in\triangle,k\neq j_0}\left|\frac{(z_{j_0}-z_k)(z_{j_0}+\bar{z}_k)}{(z_{j_0}z_{k}-1)(z_{j_0}\bar{z}_{k}+1)}\right|\right)-\frac{\Im z_{j_0}}{\pi}\int_{\mathbb{R}}\frac{\log(1-|r(s)|^2)}{|s-z_{j_0}|^2}ds\right\}.\nonumber
\end{align}
Moreover, as $z\rightarrow\infty$, we have the following asymptotics for $m^{\Lambda}(z)$
\begin{equation}
	m^{\Lambda}(z)=I+\frac{1}{z}\begin{pmatrix} -\i+\bar{\beta}_{j_0}-\sigma\beta_{j_0}\\
		\i+\beta_{j_0}-\sigma\bar{\beta}_{j_0} & \bar{\alpha}_{j_0}-\sigma\alpha_{j_0}
	\end{pmatrix}+\mathcal{O}(z^{-2}),
\end{equation}
from which we can get the soliton solution 
\begin{align}\label{sol}
	\begin{split}
		&sol(z_{j_0},x-x_{j_0},t)=\lim_{z\rightarrow\infty}\i zm_{21}^{\Lambda}(z)=-1+\i(\beta_{j_0}-\sigma\bar{\beta}_{j_0})\\
		&=\left\{\begin{aligned}
			&-1-\frac{2\sin^2\theta_{j_0}\sech\varphi_{j_0}(1+\tanh\varphi_{j_0})}{\left(\frac{1}{2}(1-\tanh\varphi_{j_0})\cos\theta_{j_0}-\sec\theta_{j_0}\sech\varphi_{j_0}\right)^2+\tan^2\theta_{j_0}\sech^2\varphi_{j_0}},&\text{ as }j_0\in\nabla\\
			&-1-\frac{1+\tanh\varphi_{j_0}}{\sech\varphi_{j_0}-2(1+\tanh\varphi_{j_0})},&\text{ as }\sigma=1\\
			&-1+\frac{2\sin^2\theta_{j_0}sech\varphi_{j_0}(1-\tanh\varphi_{j_0})}{\left(\frac{1}{2}(1-\tanh\varphi_{j_0})\cos\theta_{j_0}-\sec\theta_{j_0}\sech\varphi_{j_0}\right)^2+\tan^2\theta_{j_0}\sech^2\varphi_{j_0}}, &\text{ as }j_0\in\triangle.
		\end{aligned}\right.
	\end{split}
\end{align}
\end{proposition}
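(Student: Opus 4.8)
The plan is to argue in three stages: a small-norm reduction of $m^{(sol)}$ to the model solution $m^{\Lambda}$, the explicit solution of the model Riemann--Hilbert problem, and the extraction of the soliton through the reconstruction formula.

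\emph{Stage 1 (reduction to $m^{\Lambda}$).} I would first construct $m^{\Lambda}$ explicitly (Stage 2) and check that it solves RHP \ref{m2} with $W\equiv0$ and $V^{(2)}\equiv I$; uniqueness is the standard vanishing lemma, using $\det m^{\Lambda}\equiv1-z^{-2}$ and the symmetries inherited from Claim 5 of RHP \ref{rhp1}. Put $E(z)=m^{(sol)}(z)m^{\Lambda}(z)^{-1}$. Since $m^{(sol)}$ and $m^{\Lambda}$ satisfy the \emph{same} residue relations (\ref{resm21})--(\ref{resm22}) at $\pm z_{j_0},\pm\bar z_{j_0}$ --- the nilpotent residue coefficients there depend only on the fixed discrete data --- the apparent poles of $E$ there are removable, so $E$ is analytic off $\Sigma^{(2)}$, normalized to $I$ at $z=\infty$ and $z=0$, with jump $V_E=m^{\Lambda}V^{(2)}m^{\Lambda,-1}$ on $\Sigma^{(2)}$. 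On the fixed circles $\Sigma^{(2)}$ the function $m^{\Lambda}$ is bounded (they are disjoint from its poles by the choice of $\rho$), while the off-diagonal entries of $V^{(2)}$ in (\ref{V21})--(\ref{V22}) carry factors $e^{\pm2it\theta(z_j)}$ that decay: on the unit circle $\re(2it\theta)=-8t\sin\omega(\cos\omega-\xi_0)(\cos\omega+\xi_0)$, whence $\re(2it\theta(z_j))<0$ for $j\in\triangle$ and $-\re(2it\theta(z_j))<0$ for $j\in\nabla\setminus\Lambda$, with magnitude $\gtrsim\rho^2t$ because $|\re z_j-\xi_0|\ge\rho$ (cf. Fig. \ref{figxi0sp}). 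Hence $\|V_E-I\|_{L^\infty\cap L^2(\Sigma^{(2)})}=\mathcal{O}(e^{-2\rho^2t})$ and Beals--Coifman small-norm theory yields existence of $E$ with $E-I=\mathcal{O}(e^{-2\rho^2t})$ uniformly; writing $E(z)=I+\frac{1}{2\pi i}\int_{\Sigma^{(2)}}\frac{(I+\eta)(V_E-I)}{s-z}\,ds$ and expanding at $z=\infty$ gives the stated relation $m^{(sol)}=m^{\Lambda}[I+\mathcal{O}(e^{-2\rho^2t})]$ together with its large-$z$ refinement.

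\emph{Stage 2 (the model problem).} If $\Lambda=\emptyset$ there is no jump and no pole, and $I+\sigma_2/z$ meets both normalizations (as $z\to0$, $z(I+\sigma_2/z)=\sigma_2+\mathcal{O}(z)$), which is (\ref{mL1}). If $\Lambda=\{j_0\}$, I would posit the rational ansatz $m^{\Lambda}(z)=I+\frac{\sigma_2}{z}+\sum_{w\in\{\pm z_{j_0},\pm\bar z_{j_0}\}}\frac{N_w}{z-w}$, each $N_w$ a matrix of the one-column type dictated by (\ref{resm21})--(\ref{resm22}); the symmetries $m^{\Lambda}(z)=\sigma_1\overline{m^{\Lambda}(\bar z)}\sigma_1=\overline{m^{\Lambda}(-\bar z)}$ and $m^{\Lambda}(z^{-1})=zm^{\Lambda}(z)\sigma_2$ (the last one using $1/z_{j_0}=\bar z_{j_0}$ on the circle) then express $N_{-z_{j_0}},N_{-\bar z_{j_0}},N_{\bar z_{j_0}}$ through $N_{z_{j_0}}$, collapsing the unknowns to the pair $(\alpha_{j_0},\beta_{j_0})$. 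Substituting the ansatz into (\ref{resm21}) or (\ref{resm22}) gives a $2\times2$ linear system whose coefficients are built from $z_{j_0}$, $z_{j_0}-\bar z_{j_0}=2i\im z_{j_0}$, $T(z_{j_0})^{\pm2}$ and $C_{j_0}e^{\pm2it\theta(z_{j_0})}$. The crucial point is that, by the symmetry $r(s^{-1})=-\overline{r(s)}$ noted earlier, the exponent in $C_{j_0}e^{\pm2it\theta(z_{j_0})}$ is purely real --- this is exactly the bookkeeping encoded in the definitions of $\varphi_{j_0}$ and $x_{j_0}$ --- so, after first deducing the first-column relation $\alpha_{j_0}=-iz_{j_0}\bar\beta_{j_0}$ (resp. $i\bar z_{j_0}\bar\beta_{j_0}$), the remaining scalar equation for $\beta_{j_0}$ can be rewritten with $\tfrac12(1\pm\tanh\varphi_{j_0})=(1+e^{\mp2\varphi_{j_0}})^{-1}$ and $\sech\varphi_{j_0}$, producing (\ref{betanabla})--(\ref{betatriangle}). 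I would treat the breather case $z_{j_0}\neq i$ and the degenerate soliton case $z_{j_0}=i$ --- where $-\bar z_{j_0}=z_{j_0}$ and the quartet of poles collapses to the pair $\{i,-i\}$ --- separately; the flag $\sigma$ records this dichotomy.

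\emph{Stage 3 (asymptotics and reconstruction).} Expanding the rational $m^{\Lambda}$ as $z\to\infty$ and collecting the $z^{-1}$ coefficient (using the identifications of $N_{-z_{j_0}},N_{\pm\bar z_{j_0}}$ with $N_{z_{j_0}}$) reproduces the displayed coefficient matrix, whose $(2,1)$-entry is $i+\beta_{j_0}-\sigma\bar\beta_{j_0}$. Applying $q=\lim_{z\to\infty}izm_{21}$ to $m^{\Lambda}$ then gives $sol(z_{j_0},x-x_{j_0},t)=i(i+\beta_{j_0}-\sigma\bar\beta_{j_0})=-1+i(\beta_{j_0}-\sigma\bar\beta_{j_0})$, and inserting the closed forms of $\beta_{j_0}$ and simplifying with $\sech^2\varphi=(1-\tanh\varphi)(1+\tanh\varphi)$ yields (\ref{sol}). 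The main obstacle is Stage 2: arranging the residue system consistently under the three symmetries and, above all, carrying the algebra through to the hyperbolic closed form --- in particular verifying that the exponent is genuinely real so that $\varphi_{j_0}$ and the translation $x_{j_0}$ are well defined. Stages 1 and 3 are routine small-norm and bookkeeping arguments.
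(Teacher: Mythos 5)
Your proposal is correct and follows essentially the same route as the paper: the model problem is solved exactly as in the paper's proof (Liouville for $\Lambda=\emptyset$; otherwise a rational ansatz with poles at $0,\pm z_{j_0},\pm\bar z_{j_0}$ reduced by the symmetries $m=\sigma_1\overline{m(\bar z)}\sigma_1=\overline{m(-\bar z)}$, $m(z^{-1})=zm(z)\sigma_2$, with the residue conditions giving the linear equation for $\beta_{j_0}$ and the reality of the phase coming from $C_0=c_{j_0}T(z_{j_0})^2=z_{j_0}|C_0|$), and your small-norm Stage 1 is exactly the Beals--Coifman argument the paper carries out for $m^{err}=m^{(sol)}(m^{\Lambda})^{-1}$ in the following subsection, with the same $\mathcal{O}(e^{-2\rho^2 t})$ jump estimate on the circles of $\Sigma^{(2)}$.
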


\begin{proof}
	Owing to $V\equiv I$ and $W\equiv0$, we have  that $m^{\Lambda}(z)$ is meromorphic with simple poles at $z=0$, $\pm z_{j_0}$ and $\pm\bar{z}_{j_0}$ (as $j_0\neq-1$).
	If $\Lambda=\varnothing$, then ($\ref{mL1}$) can be obtained directly from the asymptotic behaviour of RH problem $\ref{m2}$.
	If $\Lambda\neq\varnothing$, then there must be a $j_0\in\Lambda$, note that $C_0\doteq c_{j_0}T(z_{j_0})^2$ meets the condition $C_0=z_{j_0}|C_0|$ as $c_{j_0}=z_{j_0}|c_{j_0}|$.
	If $j_0\in\nabla$, then $m^{\Lambda}(z)$ coincides with the solution of RH problem $\ref{rhpm}$ with reflectionless scattering data, simple poles at $0$, $\pm z_{j_0}$ and $\pm\bar{z}_{j_0}$, and with connection coefficient $C_0$.
	Then the symmetries (\ref{symmetry3}) which is also satisfied by $m^{\Lambda}$, and the residue conditions ($\ref{resm22}$) suggest that $\alpha_{j_0}^{\nabla}=-iz_{j_0}\bar{\beta}_{j_0}^{\nabla}$ and
	\begin{equation*}
		m^{\Lambda}(z)=I+\frac{\sigma_2}{z}+\begin{pmatrix}
			\frac{\alpha_{j_0}}{z-z_{j_0}} & \frac{\bar{\beta}_{j_0}}{z-\bar{z}_{j_0}}\\
			\frac{\beta_{j_0}}{z-z_{j_0}} & \frac{\bar{\alpha}_{j_0}}{z-\bar{z}_{j_0}}
		\end{pmatrix}+\begin{pmatrix}
			\frac{-\bar{\alpha}_{j_0}}{z+\bar{z}_{j_0}} & \frac{-\beta_{j_0}}{z+z_{j_0}}\\
			\frac{-\bar{\beta}_{j_0}}{z+\bar{z}_{j_0}} & \frac{-\alpha_{j_0}}{z+z_{j_0}}
		\end{pmatrix}.
	\end{equation*}
	Under the notation $\e^{\varphi_{j_0}}=\frac{|c_{j_0}|}{\Im z_{j_0}}\e^{2\Im z_{j_0}(x+(4(\Re z_{j_0})^2+2)t)}$, using the residue conditions ($\ref{resm22}$), we can easily obtain ($\ref{betanabla}$).
	If $j_0\in\triangle$, the calculation is similar, but with $\alpha_{j_0}^{\triangle}=\i\bar{z}_{j_0}\bar{\beta}_{j_0}^{\triangle}$ and
	\begin{equation*}
		m^{\Lambda}(z)=I+\frac{\sigma_2}{z}+\begin{pmatrix}
			\frac{\alpha_{j_0}}{z-\bar{z}_{j_0}} & \frac{\bar{\beta}_{j_0}}{z-z_{j_0}}\\
			\frac{\beta_{j_0}}{z-\bar{z}_{j_0}} & \frac{\bar{\alpha}_{j_0}}{z-z_{j_0}}
		\end{pmatrix}+\begin{pmatrix}
			\frac{-\bar{\alpha}_{j_0}}{z+z_{j_0}} & \frac{-\beta_{j_0}}{z+\bar{z}_{j_0}}\\
			\frac{-\bar{\beta}_{j_0}}{z+z_{j_0}} & \frac{-\alpha_{j_0}}{z+\bar{z}_{j_0}}
		\end{pmatrix}.
	\end{equation*}
\end{proof}

\subsection{Small norm RH problem and estimate of errors   }
In this section, we will analyze a small norm RH problem.

\begin{proposition}
	The jump  matrix $V^{(2)}(z)$ has the following estimate
	\begin{equation}
		\|V^{(2)}(z)-I\|_{L^{p}(\Sigma^{(2)})}\leq c\e^{-2t\rho^2}.
	\end{equation}
\end{proposition}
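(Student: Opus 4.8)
The plan is to reduce the bound to a uniform pointwise estimate on each circle composing $\Sigma^{(2)}$, and to read the exponential smallness directly off the explicit formulas (\ref{V21})--(\ref{V22}). First I would note that $\Sigma^{(2)}$ is a finite disjoint union of circles of the fixed radius $\rho$ centred at the points $\pm z_j,\pm\bar z_j$ with $j\in H\setminus\Lambda$, hence has finite total arc length $|\Sigma^{(2)}|$; since $\|f\|_{L^p(\Sigma^{(2)})}\le\|f\|_{L^\infty(\Sigma^{(2)})}\,|\Sigma^{(2)}|^{1/p}$, it suffices to prove $\|V^{(2)}-I\|_{L^\infty(\Sigma^{(2)})}\le Ce^{-2t\rho^2}$. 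On each such circle $V^{(2)}(z)-I$ has a single nonzero entry, which is off-diagonal and, by those formulas, equals the product of an elementary factor ($z-z_j$, or $(z-z_j)^{-1}$, or a reflected analogue), a power $T(z)^{\pm2}$, and an oscillatory constant $e^{\pm2it\theta}$ evaluated at the centre of the circle.

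Second I would dispose of the non-oscillatory factors. On $|z-z_j|=\rho$ one has $|z-z_j|=\rho$ exactly, so the elementary factor equals $\rho^{\pm1}$, and the connection coefficients $c_j$ are fixed nonzero constants. For $T(z)^{\pm2}$ I would invoke Proposition \ref{propertiesofT}: the zeros and poles of $T$ in $\mathbb{C}\setminus\mathbb{R}$ sit only at $z_k,-\bar z_k$ and $\bar z_k,-z_k$ with $k\in\triangle$, while the Cauchy-integral factor of $T$ is analytic and nonvanishing on compact subsets of $\mathbb{C}\setminus\mathbb{R}$. Because $\rho$ was chosen below half of $\min_{z_j,z_k\in\mathcal{Z}^+}|\re(z_j-z_k)|$ and of $\min_{z_k\in\mathcal{Z}^+}|\im z_k|$, each circle in $\Sigma^{(2)}$ stays in $\mathbb{C}^+$ or in $\mathbb{C}^-$ and meets no singularity of $T$ other than possibly its own centre, which is never on the circle; continuity on the compact set $\Sigma^{(2)}$ then gives $0<c_T\le|T(z)|\le C_T$ there. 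Altogether $|V^{(2)}(z)-I|\le C\,|e^{\pm2it\theta}|$ uniformly on $\Sigma^{(2)}$, with $C$ depending only on $\rho$, the discrete data, and $\|\log(1-|r|^2)\|_{L^1(\mathbb{R})}$.

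Third, and this is the crux, I would estimate the oscillatory constant. Writing $z_j=e^{i\omega_j}$ (as $|z_j|=1$), one has $\zeta(z_j)=i\sin\omega_j$ and $\lambda(z_j)=\cos\omega_j$, so $\theta(z_j)$ is purely imaginary, and --- as already computed for the unit circle ---
\[
\re\bigl(2it\theta(z_j)\bigr)=-2t\sin\omega_j\bigl(\xi+2+4\cos^2\omega_j\bigr)=-8t\sin\omega_j(\cos\omega_j-\xi_0)(\cos\omega_j+\xi_0),\qquad\xi_0=\sqrt{-(\xi+2)/4}.
\]
By $\theta(\bar z)=\overline{\theta(z)}$ and $\theta(-z)=-\theta(z)$ the exponent of the surviving exponential on every circle of $\Sigma^{(2)}$ is $+\re(2it\theta(z_j))$ or $-\re(2it\theta(z_j))$, and the partition $H=\triangle\cup\nabla$, the retention of the poles indexed by $\Lambda$, and the interpolation $G$ are arranged so that the sign chosen makes that exponent nonpositive: for $j\in\triangle\setminus\Lambda$ the surviving factor is $e^{2it\theta(z_j)}$ with $\cos\omega_j-\xi_0\ge\rho$, and for $j\in\nabla\setminus\Lambda$ it is $e^{-2it\theta(z_j)}$ with $\xi_0-\cos\omega_j\ge\rho$ (so also $\xi_0\ge\rho$). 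In both cases $\cos\omega_j+\xi_0\ge\rho$ as well, so the product above is $\ge\rho^2$; a two-line case split --- use this directly if $\sin\omega_j\ge\tfrac14$, and if $\sin\omega_j<\tfrac14$ note that then $\cos\omega_j$ is near $1$, so one factor is $\ge\tfrac12$ while $\sin\omega_j>2\rho$ by the choice of $\rho$ --- yields $|\re(2it\theta(z_j))|\ge 2t\rho^2$, whence the surviving exponential is $\le e^{-2t\rho^2}$. Combining the three steps gives $\|V^{(2)}-I\|_{L^\infty(\Sigma^{(2)})}\le Ce^{-2t\rho^2}$ and hence the claimed $L^p$ estimate.

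The point requiring care is the sign bookkeeping in the third step: for each configuration (centre at $z_j$, $\bar z_j$, $-z_j$ or $-\bar z_j$, with $j\in\triangle\setminus\Lambda$ or $j\in\nabla\setminus\Lambda$) one must verify that the conjugations by $G$ and $T(z)^{\sigma_3}$ performed above have placed the \emph{decaying} exponential, not the growing one, in the single surviving off-diagonal slot of $V^{(2)}-I$; this is exactly what the construction of the sets $\triangle,\nabla,\Lambda$ and of $G$ was designed to guarantee, and it is where the separation hypothesis $|\re z_j-\xi_0|\ge\rho$ for $j\notin\Lambda$ enters. Everything else --- the reduction to the $L^\infty$ norm and the two-sided bound on $T$ over $\Sigma^{(2)}$ --- is routine once Proposition \ref{propertiesofT} is in hand.
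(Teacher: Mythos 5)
Your proposal is correct and follows essentially the same route as the paper: the paper likewise reduces to the single nontrivial off-diagonal entry on each circle, absorbs the factors $c_j^{\pm1}$, $|z\mp z_j|^{\pm1}=\rho^{\pm1}$ and $T(z)^{\pm2}$ into the constant, and bounds the surviving exponential $e^{\pm2it\theta}$ at the centre by $e^{-2t\rho^2}$ using $|\re z_j-\xi_0|\ge\rho$ and $\im z_j>2\rho$. The only difference is that you spell out what the paper leaves implicit (the $L^p$-to-$L^\infty$ reduction over the finite-length contour, the two-sided bound on $T$ over $\Sigma^{(2)}$, and the elementary case split yielding the exponent $2t\rho^2$), which is a faithful filling-in rather than a different argument.
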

\begin{proof}
	For $|z-z_j|=\rho$ and $j\in\nabla\setminus\Lambda$, we have
	\begin{align}
		\begin{split}
			\|V^{(2)} (z) -I\|_{L^{\infty}(\Sigma^{(2)})}=\left|-\frac{c_j}{z-z_j}T(z)^2\e^{-2\i t\theta(z_j)}\right|
		\leq c\e^{2t\Im z_j(\xi+4(\Re z_j)^2+2)}\leq c\e^{-2t\rho^2}.
		\end{split}
	\end{align}
	The others can be obtained in a similar manner.
\end{proof}
Define
\begin{equation}
	m^{(err)}(z)=m^{(sol)}(z)m^{\Lambda}(z)^{-1},
\end{equation}
then $m^{(err)}(z)$ satisfies the following RH problem:
\begin{RHP}\label{merr}
	Find a $2\times2$ matrix-valued function $m^{(err)}(z)$ such that
	\begin{enumerate}
		\item $m^{(err)}(z)$ is analytic in $\mathbb{C}\setminus\Sigma^{(2)}$, where $\Sigma^{(2)}$ is defined in \eqref{contour: Sigma2}.
		\item $m^{(err)}(z)$ has the following asymptotics:
		\begin{equation}
			m^{(err)}(z)=I+\mathcal{O}(z^{-1}), \hspace{0.5cm} z\rightarrow\infty.
		\end{equation}
		\item $m^{(err)}(z)$ satisfies the following jump relation:
		\begin{equation}
			m^{(err)}_+(z)=m^{(err)}_-(z)V^{(err)}(z), \hspace{0.5cm} z\in\Sigma^{(2)},
		\end{equation}where the jump matrix is defined as
		\begin{equation}
			V^{(err)}(z)=m^{\Lambda}(z)V^{(2)}(z)m^{\Lambda}(z)^{-1},\label{verr}
		\end{equation}
		and the jump contour is shown in Figure \ref{Sigma33}.
	\end{enumerate}
\end{RHP}
The following proposition gives an estimate for the jump matrix $V^{(err)}$.
\begin{proposition}
	The jump matrix $V^{(err)}(z)$ in (\ref{verr}) satisfies
	\begin{equation}
		\|V^{(err)}(z)-I\|_{L^p(\Sigma^{(2)})}\leq c\e^{-2\rho^2t},\quad1\leq p\leq\infty.
	\end{equation}
	Moreover, the solution of RH problem $\ref{merr}$ exits.
\end{proposition}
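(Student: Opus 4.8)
The plan is to reduce the statement to the standard small-norm theory for Riemann--Hilbert problems, using the exponential smallness of $V^{(2)}-I$ from the preceding proposition together with the uniform boundedness of $m^{\Lambda}$ on the contour $\Sigma^{(2)}$.

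First I would record that on $\Sigma^{(2)}$ one has $V^{err}(z)-I=m^{\Lambda}(z)\big(V^{(2)}(z)-I\big)m^{\Lambda}(z)^{-1}$. Since $\Sigma^{(2)}$ is the union of the circles $|z\pm z_j|=\rho$ and $|z\pm\bar z_j|=\rho$ with $j\in H\setminus\Lambda$, and since $\rho$ was chosen at the beginning of Section 5.2 so that all these disks are pairwise disjoint and disjoint from the (at most four) disks around $\pm z_{j_0},\pm\bar z_{j_0}$ with $j_0\in\Lambda$, the contour $\Sigma^{(2)}$ stays at distance $\geq\rho$ from every pole of $m^{\Lambda}$; moreover $\im z\geq\tfrac12\min_k\im z_k>0$ on $\Sigma^{(2)}$, so $\Sigma^{(2)}$ is bounded away from $z=0,\pm1$ and $\det m^{\Lambda}=1-z^{-2}$ is bounded away from zero there. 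From the explicit formulas for $m^{\Lambda}$ in the previous proposition, whose $(x,t)$-dependence enters only through the bounded quantities $\tanh\varphi_{j_0}$ and $\sech\varphi_{j_0}$, one obtains a constant $C$ independent of $(x,t)$ in the sector $-6<x/t<-2$ with $\|m^{\Lambda}\|_{L^{\infty}(\Sigma^{(2)})}+\|(m^{\Lambda})^{-1}\|_{L^{\infty}(\Sigma^{(2)})}\leq C$. Combining this with $\|V^{(2)}-I\|_{L^p(\Sigma^{(2)})}\leq ce^{-2\rho^2t}$ immediately gives $\|V^{err}-I\|_{L^p(\Sigma^{(2)})}\leq ce^{-2\rho^2t}$ for all $1\leq p\leq\infty$.

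For existence and uniqueness I would set $w^{err}=V^{err}-I$ and introduce the Beals--Coifman operator $C_{w^{err}}f=C_-(fw^{err})$, where $C_-$ is the Cauchy projection associated with $\Sigma^{(2)}$; as $\Sigma^{(2)}$ is a finite union of smooth circles, $C_-$ is bounded on $L^2(\Sigma^{(2)})$. Then $\|C_{w^{err}}\|_{L^2\to L^2}\leq\|C_-\|\,\|w^{err}\|_{L^{\infty}(\Sigma^{(2)})}\leq c\,\|C_-\|\,e^{-2\rho^2t}<1$ for $t$ large, so $I-C_{w^{err}}$ is invertible on $L^2(\Sigma^{(2)})$ by Neumann series. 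The singular integral equation $\mu=I+C_{w^{err}}\mu$ then has a unique solution $\mu\in I+L^2(\Sigma^{(2)})$, and
\begin{equation*}
m^{err}(z)=I+\frac{1}{2\pi i}\int_{\Sigma^{(2)}}\frac{\mu(s)w^{err}(s)}{s-z}\,ds
\end{equation*}
solves RHP \ref{merr}; uniqueness follows from $\det V^{err}\equiv1$, the normalization $m^{err}\to I$ at infinity, and Liouville's theorem.

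The estimate and the operator-theoretic construction are routine; the only point requiring a little care is the uniform-in-$(x,t)$ control of $m^{\Lambda}$ and $(m^{\Lambda})^{-1}$ on $\Sigma^{(2)}$, that is, verifying that the fixed circles of $\Sigma^{(2)}$ never collide with the moving poles $\pm z_{j_0},\pm\bar z_{j_0}$ and that the explicit one-soliton building blocks remain bounded as functions of the real phase $\varphi_{j_0}$. This is exactly what the quantitative choice of $\rho$ and the structure of the formulas in the previous proposition guarantee, so no further estimate is needed.
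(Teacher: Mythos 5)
Your proof is correct and takes essentially the same route as the paper: conjugation by the uniformly bounded $m^{\Lambda}$ and $(m^{\Lambda})^{-1}$ yields $|V^{err}(z)-I|\leq c\,|V^{(2)}(z)-I|\leq c\,e^{-2\rho^2 t}$ on $\Sigma^{(2)}$, and existence follows from the Beals--Coifman trivial factorization $b_-=I$, $b_+=V^{err}$ together with the smallness $\|C_{w}\|_{L^2\to L^2}\leq \|C_-\|\,\|V^{err}-I\|_{L^{\infty}}$ and a Neumann-series inversion of $1-C_{w}$, exactly as in the paper. One cosmetic remark: on $\Sigma^{(2)}$ the circles in the lower half plane give $|\im z|\geq\tfrac12\min_k|\im z_k|$ rather than $\im z>0$, but this does not affect your (correct) conclusion that $\Sigma^{(2)}$ stays away from $0$ and $\pm1$ and from the poles of $m^{\Lambda}$.
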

\begin{proof}
	For $z\in\Sigma^{(2)}$, we have
	\begin{equation}\label{veer-i}
		|V^{(err)}(z)-I|=|m^{\Lambda}(z)(V^{(err)}(z)-I)m^{\Lambda}(z)^{-1}|\leq c|V^{(2)}-I|\leq c\e^{-2\rho^2t}.
	\end{equation}
	According to Beals-Coifman theory, the jump matrix $V^{(err)}(z)$ has the trivial decomposition
	\begin{equation}
		V^{(err)}(z)=(b_-)^{-1}b_+,\hspace{0.5cm}b_-=I,\hspace{0.5cm}b_+=V^{(err)}(z),\nonumber
	\end{equation}
	so we can define
	\begin{equation}
		(w_e)_-=I-b_-=0,\hspace{0.5cm}(w_e)_+=b_+-I=V^{(err)}-I,\hspace{0.5cm}w_e=(w_e)_++(w_e)_-=V^{(err)}-I,\nonumber
	\end{equation}
	and
	\begin{equation}\label{cwef}
		C_{w_e}f=C_-(f(w_e)_+)+C_+(f(w_e)_-)=C_-(f(V^{(err)}-I)),
	\end{equation}
where $C_-$ is the Cauchy projection operator:
\begin{equation*}
	C_-f(z)=\lim_{z'\rightarrow z\in\Sigma^{(2)}}\frac{1}{2\pi \i}\int_{\Sigma^{(2)}}\frac{f(s)}{s-z'}ds,
\end{equation*}
and $\|C_-\|_{L^2}$ is bounded. Then the solution of RH problem $\ref{merr}$ can be expressed as
\begin{equation}\label{merr2}
	m^{(err)}(z)=I+\frac{1}{2\pi \i}\int_{\Sigma^{(2)}}\frac{\mu_e(s)(V^{(err)}(s)-I)}{s-z}ds,
\end{equation}
where $\mu_e(z)\in L^2(\Sigma^{(2)})$ satisfies $(1-C_{w_e})\mu_e(z)=I$. Using (\ref{veer-i}) and (\ref{cwef}), we can obtain that
\begin{equation}
	\|C_{w_e}\|_{L^2(\Sigma^{(2)})}\leq\|C_-\|_{L^2(\Sigma^{(2)})}\|V^{(err)}(z)-I\|_{L^{\infty}(\Sigma^{(2)})}\leq c\e^{-2\rho^2t}.
\end{equation}
Hence, the resolvent operator $(1-C_{w_e})^{-1}$ exists. Consequently, $\mu_e$ and the solution of 
RH problem $\ref{merr}$ $m^{(err)}(z)$ exist.
\end{proof}

\begin{figure}
	\centering
	{\includegraphics[width=0.4\linewidth]{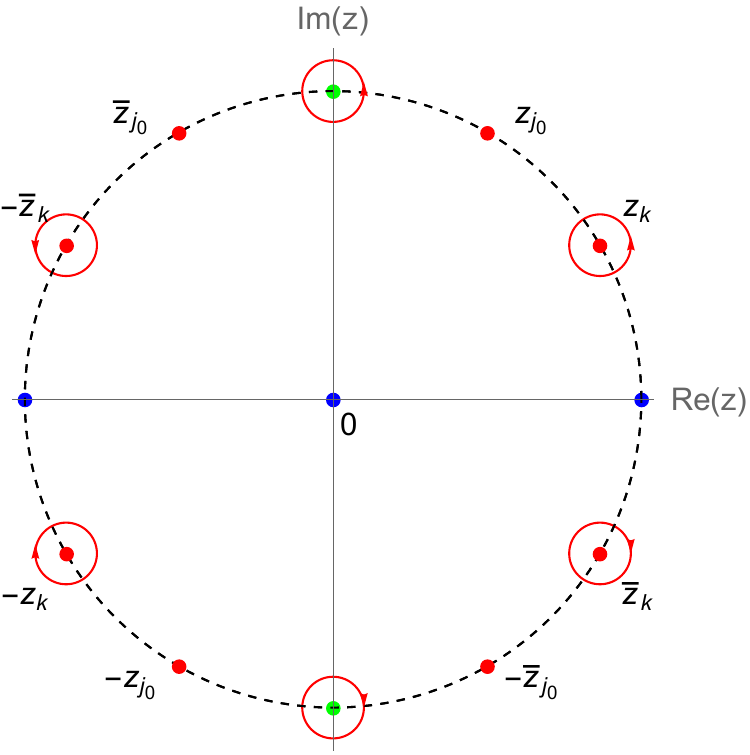}}
	\caption{ The jump contour $\Sigma^{(2)}$  for $m^{(err)}(z)$ is the union of all circle $|z-z_j|=\rho$, $j\in H  \setminus \{j_0\}$.}\label{Sigma33}
\end{figure}

\begin{proposition}
	Let $\xi=\frac{x}{t}$, then for any $(x,t)$ in $\left\{(x,t):-6<\frac{x}{t}<-2\right\}$, as $t\gg1$, uniformly for $z\in\mathbb{C}$ we have the estimate
	\begin{equation}
		m^{(sol)}(z)=m^{\Lambda}(z)[I+\mathcal{O}(\e^{-2\rho^2t})]. \nonumber
	\end{equation}
	Especially, for $z$ sufficiently large, we have the asymptotic extension
	\begin{equation}\label{asympmsol}
		m^{(sol)}(z)=m^{\Lambda}(z)[I+z^{-1}\mathcal{O}(\e^{-2\rho^2t})+\mathcal{O}(z^{-2})].
	\end{equation}
	So we have the asymptotics of the potential
	\begin{equation}\label{qsoln}
		q^{(sol),N}(x,t)=q^{\Lambda}(x,t)+\mathcal{O}(\e^{-2\rho^2t})=sol(z_{j_0},x-x_{j_0},t) +1 + \mathcal{O}(\e^{-2\rho^2t}),
	\end{equation}
	where
	\begin{equation}
		q^{(sol),N}(x,t)=\lim_{z\rightarrow\infty}\i zm^{(sol)}_{21}(z),\quad q^{\Lambda}(x,t)=\lim_{z\rightarrow\infty}\i zm^{\Lambda}_{21}(z).\nonumber
	\end{equation}
\end{proposition}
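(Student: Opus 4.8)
The plan is to convert the small-norm facts already in hand for $m^{err}(z)=m^{(sol)}(z)m^{\Lambda}(z)^{-1}$ into a uniform pointwise bound together with a large-$z$ expansion, and then to read off the potential from the $z^{-1}$ coefficient of that expansion.

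First I would invoke the two preceding propositions, which give $\|V^{err}-I\|_{L^{p}(\Sigma^{(2)})}\le ce^{-2\rho^{2}t}$ for $1\le p\le\infty$, the existence of $m^{err}$ solving RHP \ref{merr}, and the representation
\begin{equation*}
m^{err}(z)=I+\frac{1}{2\pi i}\int_{\Sigma^{(2)}}\frac{\mu_{e}(s)(V^{err}(s)-I)}{s-z}\,ds,\qquad (1-C_{w_{e}})\mu_{e}=I,
\end{equation*}
where $\|(1-C_{w_{e}})^{-1}\|_{L^{2}(\Sigma^{(2)})}$ is bounded for $t\gg1$, hence $\|\mu_{e}\|_{L^{2}(\Sigma^{(2)})}\le c$. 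Since $\Sigma^{(2)}$ is a fixed compact union of circles, Cauchy--Schwarz on the integral yields $|m^{err}(z)-I|\le\frac{1}{2\pi}\|\mu_{e}\|_{L^{2}}\|V^{err}-I\|_{L^{2}}\|(s-z)^{-1}\|_{L^{2}(\Sigma^{(2)})}\le ce^{-2\rho^{2}t}$ for every $z$ at a fixed distance from $\Sigma^{(2)}$, and, using the $L^{2}$-boundedness of $C_{-}$ together with the continuity of the boundary values $m^{err}_{\pm}$, the same bound persists up to and across $\Sigma^{(2)}$. As $m^{err}$ has no poles (RHP \ref{merr} imposes only $m^{err}\to I$ at infinity), it follows that $m^{err}(z)=I+\mathcal{O}(e^{-2\rho^{2}t})$ uniformly on $\mathbb{C}$; writing $m^{(sol)}=m^{err}m^{\Lambda}$ then gives $m^{(sol)}(z)=[I+\mathcal{O}(e^{-2\rho^{2}t})]m^{\Lambda}(z)$, equivalently $m^{(sol)}(z)=m^{\Lambda}(z)[I+\mathcal{O}(e^{-2\rho^{2}t})]$ away from the poles $\{0,\pm z_{j_{0}},\pm\bar{z}_{j_{0}}\}$ of $m^{\Lambda}$. (Near $z=0$ the functions $m^{(sol)}$ and $m^{\Lambda}$ share the normalization $zm\to\sigma_{2}$, so $m^{err}$ is in fact regular there, consistent with the above.)

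For the behaviour at infinity I would expand $\frac{1}{s-z}=-\sum_{k\ge1}s^{k-1}z^{-k}$ for $|z|>\max_{s\in\Sigma^{(2)}}|s|$, obtaining
\begin{equation*}
m^{err}(z)=I-\frac{1}{z}\,\frac{1}{2\pi i}\int_{\Sigma^{(2)}}\mu_{e}(s)(V^{err}(s)-I)\,ds+\mathcal{O}(z^{-2}),
\end{equation*}
whose $z^{-1}$ coefficient has norm $\le c\|\mu_{e}\|_{L^{2}}\|V^{err}-I\|_{L^{2}}\le ce^{-2\rho^{2}t}$, the $\mathcal{O}(z^{-2})$ remainder being estimated in the same way. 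Thus $m^{err}(z)=I+z^{-1}\mathcal{O}(e^{-2\rho^{2}t})+\mathcal{O}(z^{-2})$, and multiplying by $m^{\Lambda}(z)$ produces \eqref{asympmsol}.

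It remains to reconstruct the potential. Writing $m^{err}(z)=I+z^{-1}E_{1}+\mathcal{O}(z^{-2})$ with $E_{1}=\mathcal{O}(e^{-2\rho^{2}t})$ and $m^{\Lambda}(z)=I+z^{-1}M_{1}+\mathcal{O}(z^{-2})$, the product gives $m^{(sol)}_{21}(z)=z^{-1}\big((E_{1})_{21}+(M_{1})_{21}\big)+\mathcal{O}(z^{-2})$, so
\begin{equation*}
q^{(sol),N}(x,t)=\lim_{z\to\infty}izm^{(sol)}_{21}(z)=i(M_{1})_{21}+\mathcal{O}(e^{-2\rho^{2}t})=q^{\Lambda}(x,t)+\mathcal{O}(e^{-2\rho^{2}t}),
\end{equation*}
and inserting the closed-form value of $q^{\Lambda}(x,t)=\lim_{z\to\infty}izm^{\Lambda}_{21}(z)$ recorded in \eqref{sol} yields \eqref{qsoln}. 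The main obstacle is the uniformity of the Cauchy-integral estimate — in particular controlling $\|(s-z)^{-1}\|_{L^{2}(\Sigma^{(2)})}$ as $z$ approaches $\Sigma^{(2)}$ and separating the $z^{-1}$ from the $z^{-2}$ contributions cleanly enough that the displayed $z^{-1}$ coefficient is provably exponentially small; once those are in place the remainder is bookkeeping built on the already-proven small-norm estimates.
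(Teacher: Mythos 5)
Your proposal is correct and follows essentially the same route as the paper: the Beals--Coifman representation $m^{err}=I+\frac{1}{2\pi i}\int_{\Sigma^{(2)}}\frac{\mu_e(s)(V^{err}(s)-I)}{s-z}ds$, Cauchy--Schwarz against the exponentially small $\|V^{err}-I\|_{L^p(\Sigma^{(2)})}$ bounds, a large-$z$ expansion isolating an exponentially small $z^{-1}$ coefficient, and the reconstruction formula. The only cosmetic difference is that you bound $\|\mu_e\|_{L^2}$ directly (valid since $\Sigma^{(2)}$ is compact) where the paper splits $\mu_e=I+(\mu_e-I)$, and you are slightly more explicit than the paper about uniformity of the estimate as $z$ approaches $\Sigma^{(2)}$.
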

\begin{proof}
	From ($\ref{merr2}$), we can split $m^{(err)}(z)-I$ into two parts 
	\begin{equation}
		m^{(err)}(z)-I=\frac{1}{2\pi \i}\int_{\Sigma^{(2)}}\frac{V^{(err)}(s)-I}{s-z}ds+\frac{1}{2\pi \i}\int_{\Sigma^{(2)}}\frac{(\mu_e(s)-I)(V^{(err)}(s)-I)}{s-z}ds.\nonumber
	\end{equation}
	It follows that
	\begin{align}
		\begin{split}
			|m^{(err)}(z)-I|&\leq\|V^{(err)}(s)-I\|_{L^2(\Sigma^{(2)})}\left\|\frac{1}{s-z}\right\|_{L^2(\Sigma^{(2)})}
			+\|V^{(err)}(s)-I\|_{L^{\infty}(\Sigma^{(2)})}\|\mu_e(s)-I\|_{L^2(\Sigma^{(2)})}\left\|\frac{1}{s-z}\right\|_{L^2(\Sigma^{(2)})}\\
			&\leq ce^{-2\rho^2t}.\nonumber
		\end{split}
	\end{align}
	Therefore, we have the estimate for $m^{(sol)}(z)$
	\begin{equation*}
		m^{(sol)}(z)=m^{\Lambda}(z)\left[I+\frac{1}{2\pi \i}\int_{\Sigma^{(2)}}\frac{\mu_e(s)(V^{(err)}(s)-I)}{s-z}ds\right]=m^{\Lambda}(z)\left[I+\mathcal{O}(\e^{-2\rho^2t})\right].
	\end{equation*}
	As $z\rightarrow\infty$, $m^{(err)}(z)$ has the following asymptotic extension
	\begin{equation}
		m^{(err)}(z)=I+\frac{m^{(err)}_1}{z}+\mathcal{O}(z^{-1}),
	\end{equation}
where
\begin{align}
	\begin{split}
		m^{(err)}_1=-\frac{1}{2\pi \i}\int_{\Sigma^{(2)}}\mu_e(s)(V^{(err)}(s)-I)ds=-\frac{1}{2\pi \i}\int_{\Sigma^{(2)}}(V^{(err)}(s)-I)ds-\frac{1}{2\pi \i}\int_{\Sigma^{(2)}}(\mu_e(s)-I)(V^{(err)}(s)-I)ds,
	\end{split}
\end{align}
from which we have the following estimate
\begin{equation*}
	m^{(err)}_1(z)\leq\|V^{(err)}(z)-I\|_{L^1}+\|\mu_e(z)-I\|_{L^2}\|V^{(err)}(z)-I\|_{L^2}\leq c\e^{-2\rho^2t}.
\end{equation*}
\end{proof}

\subsection{Analysis on  a pure  $\bar\partial$-problem}

In this section, we use $m^{(sol)}(z)$ to reduce $m^{(2)}(z)$ to a pure $\bar{\partial}$-problem and analyze it.
Define the function
\begin{equation}\label{m3}
	m^{(3)}(z)=m^{(2)}(z)\left(m^{(sol)}(z)\right)^{-1},
\end{equation}
then $m^{(3)}(z)$ satisfies the following $\bar{\partial}$-problem.
\begin{RHP}
	Find a $2\times2$ matrix-valued function $m^{(3)}(z)$ such that
	\begin{enumerate}
		\item $m^{(3)}(z)$ is analytic in $\mathbb{C}\setminus\overline{\Omega}$, and  continuous in $\mathbb{C}$.
		\item $m^{(3)}(z)$ has the asymptotics:
		\begin{equation}
			m^{(3)}(z)=I+\mathcal{O}(z^{-1}),\hspace{0.5cm}z\rightarrow\infty.\label{m3asymp}
		\end{equation}
		\item For $z\in\mathbb{C}$, we have
		\begin{equation}\label{dbarm3}
			\bar{\partial}m^{(3)}(z)=m^{(3)}(z)W^{(3)}(z),
		\end{equation}
		where $W^{(3)}(z)=m^{(sol)}(z)W(z)\left(m^{(sol)}(z)\right)^{-1}$ and $W(z)$ is defined in ($\ref{W}$).
	\end{enumerate}
\end{RHP}

\begin{proof}
	The definition ($\ref{m3}$) implies that $m^{(3)}(z)$ has no jump on the circles $|z\pm z_j|=\rho$ nor $|z\pm\bar{z}_j|=\rho$ since
	\begin{equation*}
		\left(m^{(3)}_-(z)\right)^{-1}m^{(3)}_+(z)=m^{(sol)}_-(z)V^{(2)}(z)\left(m_+^{(sol)}(z)\right)^{-1}=I.
	\end{equation*}
	The asymptotics (\ref{m3asymp}) and $\bar{\partial}$-derivative (\ref{dbarm3}) can be obtained directly from those of $m^{(2)}(z)$ and $m^{(sol)}(z)$.
	We just need to prove that $m^{(3)}$ defined in \eqref{m3} has no isolated singularities. At $z=0$ we have $\left(m^{(sol)}(z)\right)^{-1}=(1-z^{-2})^{-1}\left(m^{(sol)}(z)\right)^T$, then
	\begin{equation}
		\lim_{z\rightarrow0}m^{(3)}(z)=\lim_{z\rightarrow0}\frac{\left(zm^{(2)}(z)\right)z\left(m^{(sol)}(z)\right)^T}{z^2-1}=I,
	\end{equation}
	which implies that $z=0$ is not a singularity of $m^{(3)}(z)$. $\det m^{(sol)}(z)=1-z^{-2}$ indicates that $z=\pm1$ might be the potential singularities.  
	Applying the symmetries ($\ref{symmetrym}$) to the expansion of $m^{(2)}$ and $m^{(sol)}$, we get for some constants $c_1$ and $c_2$,
	\begin{equation}
		m^{(2)}(z)=\begin{pmatrix}
			c_1 & \mp \i c_1\\
			\pm \i\bar{c}_1 & \bar{c}_1
		\end{pmatrix}+\mathcal{O}(z\mp1),\hspace{0.5cm}\left(m^{(sol)}(z)\right)^{-1}=\frac{\pm1}{2(z\mp1)} \begin{pmatrix}
			c_2 & \pm \i\bar{c}_2\\
			\mp \i c_2 & \bar{c}_2
		\end{pmatrix}^T+\mathcal{O}(1).\nonumber
	\end{equation}
It then follows that
\begin{equation}
	\lim_{z\rightarrow\pm1}m^{(3)}(z)=\mathcal{O}(1),
\end{equation}
which implies that $m^{(3)}(z)$ has no singularities at $z=\pm1$. Now we are going to prove that $z_{j_0}$ is not a singularity of $m^{(3)}$. If $m^{(2)}(z)$ has pole at $z_{j_0}$, $j_0\in\nabla\cap\Lambda$, then we have the residue condition
\begin{equation*}
	\Res_{z=z_{j_0}}m^{(2)}(z)=\lim_{z\rightarrow z_{j_0}}m^{(2)}(z)\mathcal{N}_{j_0},
\end{equation*}
where
\begin{equation*}
	\mathcal{N}_{j_0}=\begin{pmatrix}
		0 & 0\\
		C_{j_0}\e^{2\i t\theta(z_{j_0})} & 0
	\end{pmatrix},
\end{equation*}
so $m^{(2)}$ has the Laurent expansion
\begin{equation*}
	m^{(2)}(z)=\frac{\Res_{z=z_{j_0}}m^{(2)}}{z-z_{j_0}}c(z_{j_0})+\mathcal{O}(z-z_{j_0}),
\end{equation*}
where $c(z_{j_0})$ is a constant matrix.
It follows immediately that
\begin{equation*}
	\Res_{z=z_{j_0}}m^{(2)}(z)=c(z_{j_0})\mathcal{N}_{j_0},
\end{equation*}
which gives another form of expansion
\begin{equation}
	m^{(2)}(z)=c(z_{j_0})\left[I+\frac{\mathcal{N}_{j_0}}{z-z_{j_0}}\right]+\mathcal{O}(z-z_{j_0}).
\end{equation}
Since $m^{(sol)}$ and $m^{(2)}$ have the same residue conditions, owing to $\det m^{(2)}(z)=\det m^{(sol)}(z)=1-z^{-2}$, we have
\begin{equation}
	\left(m^{(sol)}(z_{j_0})\right)^{-1}=\frac{z_{j_0}^2}{z_{j_0}^2-1}\left[I-\frac{\mathcal{N}_{j_0}}{z-z_{j_0}}\right]c(z_{j_0})^T+\mathcal{O}(z-z_{j_0}).
\end{equation}
Taking the above into \eqref{m3}, we  have
\begin{equation}
	m^{(3)}(z)=\frac{z_{j_0}^2}{z_{j_0}^2-1}c(z_{j_0})\left[I+\frac{\mathcal{N}_{j_0}}{z-z_{j_0}}\right]\left[I-\frac{\mathcal{N}_{j_0}}{z-z_{j_0}}\right]c(z_{j_0})^T+\mathcal{O}(1),
\end{equation}
from which we can state that $m^{(3)}(z)$ is bounded near the pole and the pole is removable.
We then give the $\bar{\partial}$ derivative of $m^{(3)}$
\begin{align*}
	\bar{\partial}m^{(3)}(z)&=\bar{\partial}m^{(2)}\left(m^{(sol)}(z)\right)^{-1}=m^{(2)}(z)\bar{\partial}R^{(2)}\left(m^{(sol)}(z)\right)^{-1}\\
	&\left[m^{(2)}(z)\left(m^{(sol)}(z)\right)^{-1}\right]\left[m^{(sol)}(z)\bar{\partial}R^{(2)}\left(m^{(sol)}(z)\right)^{-1}\right]=m^{(3)}(z)W^{(3)}.
\end{align*}
where $W^{(3)}(z)=m^{(sol)}(z)W(z)\left(m^{(sol)}(z)\right)^{-1}$.
\end{proof}

The solution of the pure $\bar{\partial}$ problem can be expressed as
\begin{equation}
	m^{(3)}(z)=I-\frac{1}{\pi}\int\int_{\mathbb{C}}\frac{m^{(3)}(s)W^{(3)}(s)}{s-z}dA(s)
\end{equation}
where $dA(s)$ is the Lebesgue measure in $\mathbb{R}$. And it can also be expressed by operator equation
\begin{equation}
	(I-J)m^{(3)}(z)=I \Longleftrightarrow  m^{(3)}(z)=I+Jm^{(3)}(z),\nonumber
\end{equation}
where $J$ is the Cauchy operator
\begin{equation}\label{J}
	Jf(z)=-\frac{1}{\pi}\iint_{\mathbb{C}}\frac{f(s)W^{(3)}(s)}{s-z}dA(s)=\frac{1}{\pi z}*f(z)W^{(3)}(z).
\end{equation}

Then we will show that $J$ is small-norm for large $t$.
\begin{proposition}\label{operatorJ}
	We have $J$: $L^{\infty}(\mathbb{C})\rightarrow L^{\infty}(\mathbb{C})\cap C^0(\mathbb{C})$ and for fixed $\xi_0\in(0,2)$ there exists a constant $C=C(q_0,\xi_0)$, such that for all $t\gg1$ and for all $|\xi+4|\leq\xi_0$,
	\begin{equation}\label{estimationJ}
		\|J\|_{L^{\infty}(\mathbb{C})\rightarrow L^{\infty}(\mathbb{C})}\leq Ct^{-1/2}.
	\end{equation}
\end{proposition}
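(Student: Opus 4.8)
The plan is to reduce (\ref{estimationJ}) to a scalar bound on the solid Cauchy transform of $W^{(3)}$ and then to extract the gain $t^{-1/2}$ from the exponential decay of $e^{\pm2it\theta}$ furnished by the phase estimate of the preceding proposition. First I would record that $W^{(3)}=m^{(sol)}W(m^{(sol)})^{-1}$ is supported in $\Omega=\bigcup_{k=1}^{4}\Omega_{k}$ and that $m^{(sol)}$, $(m^{(sol)})^{-1}$ are bounded on $\Omega$: away from the interpolation disks (which $\Omega$ avoids by the choice of $\theta_{0}$) and from $\{0,\pm1\}$ this is immediate; near the vertex $z=0$ the simple pole $m^{(sol)}\sim\sigma_{2}/z$ is cancelled in the conjugation since $(m^{(sol)})^{-1}\sim z\sigma_{2}$; near $z=\pm1$ the factor $(m^{(sol)})^{-1}$ grows only like $(z\mp1)^{-1}$ (because $\det m^{(sol)}=1-z^{-2}$), and this is absorbed by the linear vanishing (\ref{dbarR2}) of $\bar\partial R_{j}$. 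Hence $W^{(3)}$ is continuous, $\|W^{(3)}(s)\|\le C\,|W(s)|$ wherever $W\neq0$, and $W^{(3)}\in L^{1}(\mathbb{C})\cap L^{p}(\mathbb{C})$ for some $p\in(2,4)$ (using (\ref{dbarR1}), (\ref{dbarR2}) and the exponential factor). The classical mapping property of the solid Cauchy operator (continuity and boundedness of the Cauchy transform of an $L^{1}\cap L^{p}$ density, $p>2$) then gives $J:L^{\infty}(\mathbb{C})\to L^{\infty}(\mathbb{C})\cap C^{0}(\mathbb{C})$, and since $|Jf(z)|\le\pi^{-1}\|f\|_{L^{\infty}}\,\mathcal{I}(z)$ with
\[
\mathcal{I}(z):=\iint_{\Omega}\frac{\|W^{(3)}(s)\|}{|s-z|}\,dA(s),
\]
it remains to prove $\mathcal{I}(z)\le Ct^{-1/2}$ uniformly in $z\in\mathbb{C}$ and in $|\xi+4|\le\xi_{0}$.

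By the symmetry $R(z)=-\overline{R(\bar{z}^{-1})}$ and the opposite signs of $\re(2it\theta)$ on the upper and lower wedges, it suffices to bound the contribution of $\Omega_{1}$. Writing $s=u+iv=|s|e^{i\omega}$ with $v>0$, the phase estimate yields $\re(2it\theta(s))\le-\tfrac16F(|s|)^{2}t|\sin\omega|(2-|\xi+4|)\le-2\kappa t v$, where $\kappa:=\tfrac16(2-\xi_{0})>0$ and I used $F(|s|)\ge2$ together with $|s|\,|\sin\omega|=v$. On $\Omega_{1}$ we have $\|W(s)\|=|\bar\partial R_{1}(s)|\,e^{\re(2it\theta(s))}\le c\,|\bar\partial R_{1}(s)|\,e^{-2\kappa t v}$, and by (\ref{dbarR1}) $|\bar\partial R_{1}(s)|\le c_{1}|s|^{-1/2}+c_{1}|r'(|s|)|+c_{1}\varphi(|s|)$ with $\varphi\in C_{0}^{\infty}$ supported near $1$; in a neighbourhood of $z=\pm1$ we use instead the bound $\|W^{(3)}(s)\|\le Ce^{-2\kappa tv}$ noted above. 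Thus the $\Omega_{1}$–part of $\mathcal{I}(z)$ is $\le c(\mathcal{I}_{1}+\mathcal{I}_{2}+\mathcal{I}_{3})$, where $\mathcal{I}_{1},\mathcal{I}_{2},\mathcal{I}_{3}$ are obtained from $\mathcal{I}$ by replacing $\|W^{(3)}(s)\|$ with $|s|^{-1/2}e^{-2\kappa tv}$, $|r'(|s|)|e^{-2\kappa tv}$, and (a bounded, compactly supported function near $1$) times $e^{-2\kappa tv}$, respectively.

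Each $\mathcal{I}_{j}$ I would estimate by integrating first in $u$, then in $v$; fix $z=\alpha+i\beta$. For $\mathcal{I}_{2}$ use Cauchy–Schwarz in $u$: since $r\in H^{1}(\mathbb{R})$ by Proposition~\ref{rinH1}, a polar change of variable on the thin wedge $\Omega_{1}$ gives $\|r'(|\cdot|)\mathbf{1}_{\Omega_{1}}(\cdot,v)\|_{L^{2}_{u}}\le C\|r'\|_{L^{2}}$ uniformly in $v$, while $\||s-z|^{-1}\|_{L^{2}_{u}}\le C|v-\beta|^{-1/2}$; hence
\[
\mathcal{I}_{2}\le C\int_{0}^{\infty}|v-\beta|^{-1/2}e^{-2\kappa tv}\,dv=Ct^{-1/2}\int_{0}^{\infty}|w-t\beta|^{-1/2}e^{-2\kappa w}\,dw\le Ct^{-1/2},
\]
the last integral being bounded uniformly in $t\beta\in\mathbb{R}$. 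For $\mathcal{I}_{1}$ use Hölder in $u$ with exponents $(p,q)$, $p\in(2,4)$, $\tfrac1p+\tfrac1q=1$: scaling computations give $\||s|^{-1/2}\mathbf{1}_{\Omega_{1}}(\cdot,v)\|_{L^{p}_{u}}\le Cv^{1/p-1/2}$ and $\||s-z|^{-1}\|_{L^{q}_{u}}\le C|v-\beta|^{1/q-1}$, so $\mathcal{I}_{1}\le C\int_{0}^{\infty}v^{1/p-1/2}|v-\beta|^{1/q-1}e^{-2\kappa tv}\,dv$, and the substitution $v\mapsto w/t$ extracts exactly $t^{-1/2}$ because $\tfrac1p+\tfrac1q=1$ and both exponents $1/p-1/2$ and $1/q-1$ lie in $(-1,0)$, which keeps the remaining $w$-integral bounded uniformly in $t\beta$. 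The integral $\mathcal{I}_{3}$, with a bounded compactly supported density, is handled identically and gives $\mathcal{I}_{3}\le Ct^{-1/q}=o(t^{-1/2})$. Summing the four wedges yields $\mathcal{I}(z)\le Ct^{-1/2}$, which is (\ref{estimationJ}).

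The main obstacle is the bookkeeping near the distinguished points $z=0,\pm1$ — showing that the pole of $m^{(sol)}$ at $0$ cancels in $W^{(3)}$, and that the vanishing (\ref{dbarR2}) of $\bar\partial R_{j}$ compensates the $(z\mp1)^{-1}$ growth of $(m^{(sol)})^{-1}$, so $W^{(3)}$ stays in $L^{1}\cap L^{p}$ with $p\in(2,4)$ — together with the control, in $\mathcal{I}_{1}$ and $\mathcal{I}_{2}$, of the singular kernel $|s-z|^{-1}$ against the non-smooth, non-$L^{\infty}$ weights $|s|^{-1/2}$ and $r'$, which confines the Hölder exponent to $p\in(2,4)$ and forces one to verify that the one-dimensional $v$–integral against $e^{-2\kappa tv}$ stays bounded uniformly in the external parameter $\beta$ after rescaling. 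The decay rate $2\kappa t v$ supplied by the phase estimate (via $F(|s|)\ge2$) is precisely what produces the gain $t^{-1/2}$.
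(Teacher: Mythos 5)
Your proposal is correct and, in its analytic core, is the same proof as the paper's: both reduce (\ref{estimationJ}) to the uniform bound $\iint_{\Omega}|W^{(3)}(s)|\,|s-z|^{-1}dA(s)\leq Ct^{-1/2}$, control $|W^{(3)}|$ through $|m^{(sol)}|\,|(m^{(sol)})^{-1}|=|m^{(sol)}|^{2}|1-s^{-2}|^{-1}$, combine (\ref{dbarR1})--(\ref{dbarR2}) with the phase bound $\re(2it\theta)\leq-c\,t\,v$, and then estimate each resulting piece by Cauchy--Schwarz or H\"older in $u$ followed by the rescaling $v\mapsto w/t$ of the $v$-integral, which is exactly where the factor $t^{-1/2}$ is produced (your exponent bookkeeping $1/p-1/2$, $1/q-1$, with sum $-1/2>-1$, matches the paper's $I_{31}$, $I_{32}$). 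The one genuine difference is the treatment of the distinguished points: the paper cuts $\Omega_{1}$ into $|s|<1$, $1\leq|s|<2$, $|s|\geq2$ and disposes of the region $|s|<1$ by the inversion $\tau=\bar{s}^{-1}$ together with the symmetry $R_{1}(s)=-\overline{R_{1}(\bar{s}^{-1})}$ (its $I_{1}$), whereas you treat the origin directly, noting that the simple pole of $m^{(sol)}$ at $0$ is compensated by $(m^{(sol)})^{-1}=\mathcal{O}(|s|)$ there (equivalently, the paper's own bound (\ref{W3}) is already $\mathcal{O}(1)\cdot|W|$ near $0$), so the same H\"older-in-$u$ estimates apply on all of $\Omega_{1}$ with no change of variables; near $s=\pm1$ both arguments play the linear vanishing (\ref{dbarR2}) against the $|s\mp1|^{-1}$ growth of $(m^{(sol)})^{-1}$. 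Your route avoids the inversion step at the cost of having to justify the uniform (in $x,t$) cancellation $|m^{(sol)}|\,|(m^{(sol)})^{-1}|\leq C$ at the origin, which does follow from the bound $|m^{(sol)}|\leq C(1+|s|^{-1})$ that the paper also uses. One small caveat: your justification of the qualitative statement $J:L^{\infty}\rightarrow L^{\infty}\cap C^{0}$ via $W^{(3)}\in L^{1}\cap L^{p}$ with $p\in(2,4)$ is not quite right for the $|r'(|s|)|$ component, since $r'\in L^{2}(\mathbb{R})$ gives no $L^{p}$ control for $p>2$; continuity should instead be read off from the same kernel estimates (or by splitting $r'$ into a bounded part and a part of small $L^{2}$ norm), which is harmless because the quantitative bound is the real content and the paper itself does not argue the $C^{0}$ property.
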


\begin{proof}
	Here we just consider the case when $f(z)\in L^\infty(\Omega_1)$. From ($\ref{J}$), we have
	\begin{equation}
		|JF(z)|\leq c\|f\|_{L^{\infty}(\mathbb{C})}\iint_{\mathbb{C}}\frac{|W^{(3)}(s)|}{|s-z|}dA(s),\nonumber
	\end{equation}
	where
	\begin{equation*}
		|W^{(3)}(s)|\leq|m^{(sol)}(s)|^2|1-s^{-2}|^{-1}|W(s)|.
	\end{equation*}
	For $z\in\Omega_1$, there exists a  $C_1$ such that
	\begin{equation}
		\parallel m^{(sol)}\parallel\leq C_1(1+|s|^{-1})=C_1|s|^{-1}\langle s\rangle. \nonumber
	\end{equation}
	Then for a fixed constant $c_1$, we have 
	\begin{equation}\label{W3}
		|W^{(3)}(s)|\leq c_1\langle s\rangle|s-1|^{-1}|\bar{\partial}R_1(s)|\e^{\Re(2\i t\theta(s))}.
	\end{equation}
	Since $\frac{\langle s\rangle}{|1+s|}=\mathcal{O}(1)$ in $\Omega_1$, we have the estimate
	\begin{equation}
		|Jf(z)|\leq c_1\iint_{\Omega_1}\frac{\langle s\rangle|\bar{\partial}R_1(s)|\e^{\Re(2\i t\theta(s))}}{|s-z||s-1|}dA(s)\doteq c_1(I_1+I_2+I_3), \nonumber
	\end{equation}
	where
	\begin{align*}
		&I_1=\iint_{\Omega_1}\frac{\langle s\rangle|\bar{\partial}R_1(s)|e^{\Re(2\i t\theta(s))}\chi_{[0,1)}}{|s-z||s-1|}dA(s),\\
		&I_2=\iint_{\Omega_1}\frac{\langle s\rangle|\bar{\partial}R_1(s)|e^{\Re(2\i t\theta(s))}\chi_{[1,2)}}{|s-z||s-1|}dA(s),\\
		&I_3=\iint_{\Omega_1}\frac{\langle s\rangle|\bar{\partial}R_1(s)|e^{\Re(2\i t\theta(s))}\chi_{[2,\infty)}}{|s-z||s-1|}dA(s),
	\end{align*}
	where $\chi_{[0,1)}(|s|)+\chi_{[1,2)}(|s|)+\chi_{[2,\infty)}(|s|)$ is the partition of unity.
	
	We first estimate $I_3$. Since $\langle s\rangle|s-1|^{-1}\leq\kappa$ for $|s|\geq2$, for a fixed $\kappa$, we only need to prove that
	\begin{equation}\label{I3estimation}
		I_3\leq\kappa\iint_{\Omega_1}\frac{(c_2|s|^{-1/2}+c_2|r'(s)|+c_2\varphi(|s|))\e^{\Re(2\i t\theta)}\chi_{[1,\infty)}(|s|)}{|s-z|}dA(s)\leq ct^{1/2},
	\end{equation}
	for some fixed $c_2$ and $c$.
	Let $s=u+\i v$, since $|\xi+4|\leq\xi_0$ for $\xi_0\in(0,2)$, we have
		$\Re(2\i t\theta(s))\leq-c'tv$.
Let $z=z_R+\i z_I$, $1/q+1/p=1$ and $p>2$.  For the integrals in ($\ref{I3estimation}$) including $f(|s|)=|r'(s)|$ or $f(|s|)=\varphi(|s|)$, we can denote
\begin{align}
	I_{31}\doteq\int_0^{\infty}\e^{-c'tv}dv\int_v^{\infty}\frac{f(|s|)\chi_{[1,\infty)}
		(|s|)}{|s-z|}du \leq\int_0^{\infty}\e^{-c'tv}\|f(|s|)\|_{L^2(v,\infty)}\left\|\frac{\chi_{[1,\infty)}(|s|)}{s-z}\right\|_{L^2(v,\infty)}dv.\label{I31}
\end{align}
While
\begin{align}
	\begin{split}
		\left\|\frac{\chi_{[1,\infty)}(|s|)}{s-z}\right\|^2_{L^2(v,\infty)}&\leq\int_v^{\infty}\frac{1}{|s-z|^2}du\leq\int_{-\infty}^{+\infty}\frac{1}{(u-z_R)^2+(v-z_I)^2}du\\
		&\xlongequal{y=\frac{u-z_R}{v-z_I}}\frac{1}{|v-z_I|}\int_{-\infty}^{+\infty}\frac{1}{1+y^2}dy=\frac{\pi}{|v-z_I|},\nonumber
	\end{split}
\end{align}
and
\begin{align}
	\begin{split}
		\|f(|s|)\|^2_{L^2(v,\infty)}&=\int_v^{\infty}|f(\sqrt{u^2+v^2})|^2du\xlongequal{\tau=\sqrt{u^2+v^2}}\int_{\sqrt{2}v}^{\infty}|f(\tau)|^2\frac{\sqrt{u^2+v^2}}{u}d\tau\\
		&\leq\sqrt{2}\int_{\sqrt{2}v}^{\infty}|f(\tau)|^2d\tau\leq\|f(s)\|^2_{L^2(\mathbb{R})}.
	\end{split}
\end{align}
Putting the above two estimates into ($\ref{I31}$), we then have
\begin{equation}\label{I311}
	I_{31}\leq c\|f\|^2_{L^2(\mathbb{R})}\left[\int_0^{z_I}\frac{\e^{-c'tv}}{\sqrt{z_I-v}}dv+\int_{z_I}^{\infty}\frac{\e^{-c'tv}}{\sqrt{v-z_I}}dv\right].
\end{equation}
Using the inequality $\sqrt{z_I}\e^{-c'tz_Iw}\leq ct^{-1/2}w^{-1/2}$, we can estimate the r.h.s in (\ref{I311})
\begin{equation}
	\int_0^{z_I}\frac{\e^{-c'tv}}{\sqrt{z_I-v}}dv\xlongequal{w=\frac{v}{z_I}}\int_0^1\frac{\sqrt{z_I}\e^{-c'tz_Iw}}{\sqrt{1-w}}dw
	\leq ct^{-1/2}\int_0^1\frac{1}{\sqrt{w(1-w)}}dw\leq ct^{-1/2}, \nonumber
\end{equation}
and
\begin{equation}
	\int_{z_I}^{\infty}\frac{\e^{-c'tv}}{\sqrt{v-z_I}}dv\leq\int_0^{\infty}\frac{\e^{-c'tw}} {\sqrt{w}}dw    =    t^{-1/2}\int_0^{\infty}  \frac{\e^{-c't\lambda}}{\sqrt{\lambda}}d\lambda\leq ct^{-1/2}. \nonumber
\end{equation}
Subsequently, we have 
\begin{equation}
	I_{31}\leq ct^{-1/2}\|f\|_{L^2(\mathbb{R})}.
\end{equation}
Then we estimate the terms in ($\ref{I3estimation}$) including $f(|s|)=|s|^{-1}$. Denote
\begin{align}
	I_{32}\doteq\int_0^{\infty}\e^{-c'tv}dv\int_0^{\infty}\frac{\chi_{[1,\infty)}(|s|)|s|^{-1/2}}{|s-z|}du\leq\int_0^{\infty}\e^{-c'tv}\||s|^{-1/2}\|_{L^p(v,\infty)}\||s-z|^{-1}\|_{L^q(v,\infty)}dv.\label{I32}
\end{align}
Then for $p>2$, we have
\begin{equation}
	\||s|^{-1/2}\|_{L^p(v,\infty)}=v^{1/p-1/2}(\int_1^{\infty}\frac{1}{(1+x)^{p/4}}dx)^{1/p}\leq cv^{1/p-1/2},\nonumber
\end{equation}
similarly, we have
\begin{equation}
	\||s-z|^{-1}\|_{L^q(v,\infty)}\leq c|v-z_I|^{1/q-1}, \quad \text{ where }\frac{1}{q}+\frac{1}{p}=1.
\end{equation}
Putting the above two estimates into ($\ref{I32}$) we have
\begin{equation}\label{I321}
	I_{32}\leq c\left[\int_0^{z_I}\e^{-c'tv}v^{1/p-1/2}|v-z_I|^{1/q-1}dv+\int_{z_I}^{\infty}\e^{-c'tv}v^{1/p-1/2}|v-z_I|^{1/q-1}dv\right].
\end{equation}
Then we give the estimate for the r.h.s in (\ref{I321})
\begin{equation}
	\int_0^{z_I}\e^{-c'tv}v^{1/p-1/2}|v-z_I|^{1/q-1}dv\xlongequal{w=v/z_I}\int_0^1\sqrt{z_I}\e^{-c'tz_Iw}w^{1/p-1/2}|1-w|^{1/q-1}dw\leq ct^{-1/2},\nonumber
\end{equation}
and
\begin{align}
	\begin{split}
		&\int_{z_I}^{\infty}\e^{-c'tv}v^{1/p-1/2}|v-z_i|^{1/q-1}dv\xlongequal{v=z_I+w}\int_0^{\infty}\e^{-c't(z_I+w)}(z_I+w)^{1/p-1/2}w^{1/q-1}dw\\
		&\leq_0^{\infty}\e^{-c'tw}w^{-1/2}dw=t^{-1/2}\int_0^{\infty}y^{-1/2}\e^{-y}dy\leq ct^{-1/2}.\nonumber
	\end{split}
\end{align}
Plugging the above two estimations into ($\ref{I321}$), we have
\begin{equation}
	I_{32}\leq ct^{-1/2}.
\end{equation}
So far we have proved ($\ref{I3estimation}$). Now we are going to estimate $I_2$. According to ($\ref{dbarR2}$), for $|s|\leq2$, we have $|\bar{\partial}R_j(s)|\leq c_1|s-1|$ and $\langle s\rangle<\sqrt{5}$, so we have
\begin{equation}
	I_2\leq\sqrt{5}c_1\iint_{\Omega_1}\frac{\e^{-\Re(2\i t\theta)}\chi_{[1,2)}(|s|)}{|s-z|}dA(s).
\end{equation}
It follows immediately from the estimate of $I_3$  that $I_2\leq ct^{-1/2}$.
Finally, we give the estimation of $I_1$. Let $w=1/\bar{z}$ and $\tau=1/\bar{s}$, then we have
\begin{align}
	\begin{split}
		I_1=\iint_{\Omega_1}\frac{|\partial_{\tau}\bar{R}_1|\e^{\Re(2\i t\theta(|\tau|))}\chi_{[1,\infty)}(|\tau|)}{|\tau^{-1}-w^{-1}||\tau^{-1}-1||\tau|^4}\left|\frac{\partial\tau}{\partial\bar{s}}\right|dA(\tau)=|w|\iint_{\Omega_1}\frac{|\bar{\partial}R_1|\e^{\Re(2\i t\theta(|\tau|))}\chi_{[1,\infty)}(|\tau|)}{|\tau-w||\tau-1|}dA(\tau).
	\end{split}
\end{align}
If $|w|\leq3$, it is obvious that the estimate of $I_1$ becomes that of $I_2$. While if $|w|\geq3$, we have
\begin{align}
	I_1\leq3\iint_{|\tau|\geq\frac{|w|}{2}}\frac{|\bar{\partial}R_1|\e^{\Re(2\i t\theta(|\tau|))}\chi_{[1,\infty)}
		(|\tau|)}{|\tau-w|}dA(\tau)+2\iint_{1\leq|\tau|\leq\frac{|w|}{2}}\frac{|\bar{\partial}R_1|\e^{\Re(2\i t\theta(|\tau|))}\chi_{[1,\infty)}(|\tau|)}{|\tau-1|}dA(\tau).\nonumber
\end{align}
It can be estimated by the same method as before, so that $I_1\leq ct^{-1/2}$. Then we have proved ($\ref{estimationJ}$).
\end{proof}

We now show that the equation
$$m^{(3)}=I+Jm^{(3)}$$
holds in the distributional sense.
In fact, for  test function $\phi\in C_0^{\infty}(\mathbb{C},\mathbb{C})$, the differential equation
\begin{equation}\label{f}
	\bar{\partial}\phi(z)=f(z)
\end{equation}
has a  solution
\begin{equation*}
	\phi(z)=\frac{1}{\pi z}*f(z)=\frac{1}{\pi}\iint_{\mathbb{C}}\frac{f(w)}{z-w}dA(w).
\end{equation*}
Using ($\ref{dbarm3}$) and ($\ref{J}$), we have
\begin{align*}
	\begin{split}
		\iint_{\mathbb{C}}\bar{\partial}m^{(3)}(w)\phi(w)dA(w)&=\iint_{\mathbb{C}}m^{(3)}(w)W^{(3)}(w)\left[\frac{1}{\pi}\iint_{\mathbb{C}}\frac{\bar{\partial}\phi(z)}{z-w}dA(z)\right]dA(w)\\
		&=-\iint_{\mathbb{C}}Jm^{(3)}(z)\bar{\partial}\phi(z)dA(z)=\iint_{\mathbb{C}}\bar{\partial}[Jm^{(3)}(z)]\phi(z)dA(z),
	\end{split}
\end{align*}
where we use the fact that the order of integration can be exchanged since $\frac{m^{(3)}(w)W^{(3)}(w)\bar{\partial}\phi(z)}{w-z}\in L^1(\mathbb{C})$. Therefore, in the distributional sense, we have $\bar{\partial}[m^{(3)}-Jm^{(3)}]=0$, which means that $m^{(3)}(z)=I+Jm^{(3)}(z)$.
Now we expand $m^{(3)}(z;x,t)$ as follows
\begin{equation}\label{asympm3}
	m^{(3)}(z;x,t)=I+\frac{m_1^{(3)}(x,t)}{z}+\mathcal{O}(z^{-1}),
\end{equation}
where
\begin{equation}\label{m31}
	m_1^{(3)}(x,t)=\frac{1}{\pi}\iint_{\mathbb{C}}m^{(3)}(s)W^{(3)}(s)dA(s).
\end{equation}
The following proposition gives the estimate for $m_1^{(3)}(x,t)$.
\begin{proposition}
	For $-6<\xi<-2 $,  there exist constants $t_1$ and $c$ such that $m_1^{(3)}(x,t)$ satisfies:
	\begin{equation}
		|m_1^{(3)}(x,t)|\leq ct^{-1} \ \text{ for } \ |x/t+4|<2 \ \text{ and } \ t\geq t_1.
	\end{equation}
\end{proposition}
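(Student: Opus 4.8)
The plan is to reduce this estimate to exactly the family of double integrals already controlled in the proof of Proposition~\ref{operatorJ}; the only difference is that here the Cauchy kernel $|s-z|^{-1}$ is \emph{absent}, and this is precisely what upgrades the decay rate from $t^{-1/2}$ to $t^{-1}$. First I would note that \eqref{estimationJ} gives $\|J\|_{L^\infty(\mathbb{C})\to L^\infty(\mathbb{C})}\le\tfrac12$ once $t$ is large, so the Neumann series for $(I-J)^{-1}$ converges and $\|m^{(3)}\|_{L^\infty(\mathbb{C})}\le 2$. Since $W^{(3)}$ is supported in $\Omega=\bigcup_{k=1}^{4}\Omega_k$, formula \eqref{m31} yields
\begin{equation*}
|m_1^{(3)}(x,t)|\le\frac{1}{\pi}\,\|m^{(3)}\|_{L^\infty(\mathbb{C})}\iint_{\mathbb{C}}|W^{(3)}(s)|\,dA(s)\le\frac{2}{\pi}\sum_{k=1}^{4}\iint_{\Omega_k}|W^{(3)}(s)|\,dA(s),
\end{equation*}
and by the symmetry $R(z)=-\overline{R(\bar z^{-1})}$ together with the reflection symmetries of $\theta$ it is enough to bound the integral over $\Omega_1$.

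On $\Omega_1$ I would use the pointwise bound \eqref{W3}, namely $|W^{(3)}(s)|\le c_1\langle s\rangle|s-1|^{-1}|\bar\partial R_1(s)|\,e^{\re(2it\theta(s))}$, together with the phase estimate on $\Omega_1$ (the proposition preceding \eqref{dbarR1}): for $s=\rho e^{i\omega}$ with $\omega\in(0,\phi(\xi))$, using $F(\rho)\ge\rho$ and $\sin\omega\ge\frac{2}{\pi}\omega$,
\begin{equation*}
\re(2it\theta(s))\le-c_0\,t\,\rho^2\sin\omega,\qquad c_0=\frac{1}{3\pi}\bigl(2-|\xi+4|\bigr)>0,
\end{equation*}
so that $\int_0^{\phi(\xi)}e^{\re(2it\theta(\rho e^{i\omega}))}\,d\omega\le\int_0^\infty e^{-c_0 t\rho^2\omega}\,d\omega=(c_0 t\rho^2)^{-1}$. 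I then split $\Omega_1$ with the partition of unity $\chi_{[0,1)}(|s|)+\chi_{[1,2)}(|s|)+\chi_{[2,\infty)}(|s|)$ used in Proposition~\ref{operatorJ}, obtaining three contributions $I_1,I_2,I_3$ (now \emph{without} the $|s-z|^{-1}$ factor).

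For $|s|\ge 2$ one has $\langle s\rangle|s-1|^{-1}\le\kappa$ and, by \eqref{dbarR1}, $|\bar\partial R_1(s)|\le c_1\bigl(|s|^{-1/2}+|r'(|s|)|\bigr)$ (the cutoff term vanishing there), so with the angular integral above,
\begin{equation*}
I_3\le C\int_2^\infty\bigl(\rho^{-1/2}+|r'(\rho)|\bigr)\,\rho\cdot\frac{1}{c_0 t\rho^2}\,d\rho=\frac{C}{t}\int_2^\infty\Bigl(\rho^{-3/2}+\frac{|r'(\rho)|}{\rho}\Bigr)\,d\rho\le\frac{C}{t},
\end{equation*}
the last term being finite by Cauchy--Schwarz since $r\in H^1(\mathbb{R})$. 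For $1\le|s|<2$ the bound \eqref{dbarR2} gives $\langle s\rangle|s-1|^{-1}|\bar\partial R_1(s)|\le\sqrt5\,c_1$, whence $I_2\le C\int_1^2\rho\cdot(c_0 t\rho^2)^{-1}\,d\rho\le Ct^{-1}$. Finally, for $|s|<1$ the change of variables $\tau=1/\bar s$ maps the region onto $\{|\tau|\ge 1\}\cap\Omega_1$; since $F(|s|)\ge 1/|s|=|\tau|$ the phase retains a bound of the form $\re(2it\theta(s))\le-c_0 t|\tau|^2\sin(\arg\tau)$, and using $R_1(s)=-\overline{R_1(\bar s^{-1})}$ the transformed integral is of the same type as $I_2$ and $I_3$, so $I_1\le Ct^{-1}$. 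Summing the three pieces over the four sectors gives $|m_1^{(3)}(x,t)|\le ct^{-1}$ for $|x/t+4|<2$ and $t\ge t_1$.

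The computations are routine; the two points requiring attention are the following. First, one gains a \emph{full} power of $t$ rather than $t^{-1/2}$ precisely because dropping $|s-z|^{-1}$ removes the factor $\|(s-z)^{-1}\|_{L^2}\sim|v-z_I|^{-1/2}$ that caused the half-power loss in Proposition~\ref{operatorJ}; here the full angular integral of $e^{-c_0 t\rho^2\sin\omega}$ supplies $t^{-1}\rho^{-2}$ and all remaining radial integrals converge. Second, the singular factor $|s-1|^{-1}$ near $s=1$ must be absorbed by the vanishing bound \eqref{dbarR2} on $\bar\partial R_1$, which is exactly the role of the $[1,2)$ piece of the partition of unity (and, after inversion, of the $[0,1)$ piece near $s=1$). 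I do not expect any genuine obstacle, since all the hard analytic inputs — the bounds on $\bar\partial R_j$, the phase decay, and the bound on $W^{(3)}$ — were already established in proving Proposition~\ref{operatorJ}.
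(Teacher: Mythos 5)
Your argument is correct and its skeleton is the same as the paper's: bound $\|m^{(3)}\|_{L^\infty}$ via the smallness of $J$, reduce $|m_1^{(3)}|$ to $\iint_{\Omega_1}|W^{(3)}|\,dA$ using \eqref{W3}, split with the partition of unity $\chi_{[0,1)}+\chi_{[1,2)}+\chi_{[2,\infty)}$, absorb the $|s-1|^{-1}$ singularity with \eqref{dbarR2}, and handle $|s|<1$ by the inversion $s\mapsto 1/\bar s$ together with $R_1(s)=-\overline{R_1(\bar s^{-1})}$ — exactly the paper's $I_1,I_2,I_3$ decomposition. Where you genuinely differ is in how the resulting two-dimensional integrals are estimated: the paper works in Cartesian coordinates, applies H\"older in the horizontal variable with exponents $p>2$, $q\in(1,2)$, and then integrates in $v$, whereas you work in polar coordinates, integrate the angular variable first using $\re(2it\theta)\le -c\,t F(\rho)^2\omega$ to produce the factor $C(t\rho^2)^{-1}$, and are left with elementary radial integrals (Cauchy--Schwarz for the $|r'(\rho)|$ term). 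Both routes are valid; yours makes it more transparent that the full power $t^{-1}$ comes from the complete angular integral once the Cauchy kernel $|s-z|^{-1}$ is absent, at the price of being slightly terser in two places that do work out but deserve a line each: the displayed phase bound should carry $\omega$ rather than $\sin\omega$ once the constant $\tfrac{1}{3\pi}(2-|\xi+4|)$ is used, and in the $I_1$ step the Jacobian $|\tau|^{-4}$, the factor $|\tau|^{2}$ picked up by $\bar\partial R_1$ under inversion, and $|s-1|^{-1}=|\tau|\,|\tau-1|^{-1}$ should be tracked explicitly to see that the transformed integrand is indeed dominated by those of $I_2$ and $I_3$.
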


\begin{proof}
	Lemma $\ref{operatorJ}$ implies that for $t\gg1$ and  $|\xi+4|<\xi_0$, we have $\|m^{(3)}\|_{L^{\infty}}\leq c$. Using ($\ref{W3}$) and ($\ref{m31}$), we have
	\begin{equation}
		|m_1^{(3)}(x,t)|\leq c\iint_{\Omega_1}\frac{\langle s\rangle|\bar{\partial}R_1|\e^{\Re(2\i t\theta)}}{s-1}dA(s)\leq c(I_1+I_2+I_3),
	\end{equation}
	where
	\begin{align*}
		&I_1=\iint_{\Omega_1}\frac{\langle s\rangle|\bar{\partial}R_1|\e^{\Re(2\i t\theta)}\chi_{[0,1)}(|s|)}{s-1}dA(s),\\
		&I_2=\iint_{\Omega_1}\frac{\langle s\rangle|\bar{\partial}R_1|\e^{\Re(2\i t\theta)}\chi_{[1,2)}(|s|)}{s-1}dA(s),\\
		&I_3=\iint_{\Omega_1}\frac{\langle s\rangle|\bar{\partial}R_1|\e^{\Re(2\i t\theta)}\chi_{[2,\infty)}(|s|)}{s-1}dA(s).
	\end{align*}
	For $|s|\geq2$, using $\langle s\rangle|s-1|^{-1}=\mathcal{O}(1)$, and fixing a $p>2$, $q\in(1,2)$, we have
	\begin{align*}
		I_3&\leq c\iint_{\Omega_1}[|r'(|z|)|+\varphi(|s|)+|z|^{-1/2}]\e^{\Re(2\i t\theta)}\chi_{[1,\infty)}(|s|)dA(s)\\
		&\leq c\int_0^{\infty}\|\e^{-c'tv}\|_{L^2(\max\{v,\frac{1}{\sqrt2}\},\infty)}dv+c\int_0^{\infty}\|\e^{-c'tv}\|_{L^2(\max\{v,\frac{1}{\sqrt2}\},\infty)}\||z|^{-1/2}\|_{L^q(v,\infty)}dv\\
		&\leq c\int_0^{\infty}\e^{-c'tv}(t^{-1/2}v^{-1/2}+t^{-1/p}v^{-1/p+1/q-1/2})dv\leq ct^{-1}.
	\end{align*}
For $s\in[0,2]$, we have $\langle s\rangle\leq\sqrt5$. 
Applying an approach similar to the estimation of $I_3$, we derive the inequality $I_2 \leq ct^{-1}$, where the difference lies in substituting $|r'(|z|)| + \varphi(|s|)$ with the function $f = \chi_{[1,2]}(|s|)$.
For $s\in[0,1]$, variable transformations $w=\bar{z}^{-1}$ and $r=\bar{s}^{-1}$ give that
\begin{equation*}
	I_1=\iint_{\Omega_1}\e^{\Re(2\i t\theta(w))}|\bar{\partial}R_1||w-1|^{-1}\chi_{[1,\infty)}(|w|)|w|^{-1}dA(s)\leq ct^{-1}.
\end{equation*}
So we have proved the estimate.
\end{proof}

\section{Proofs of Theorem \ref{mainresult1} and Theorem \ref{mainresult2}}\label{sec:proofs}

\begin{proof}[Proof of Theorem \ref{mainresult1}.]
		For sufficiently large $z\in\mathbb{C}\setminus\overline{\Omega}$, we have
		\begin{equation}\label{*}
			m(z)=T(\infty)^{\sigma_3}m^{(3)}(z)m^{(sol)}(z)T(\infty)^{-\sigma_3}\left[I-\frac{1}{z}T_1^{-\sigma_3}+\mathcal{O}(z^{-2})\right],
		\end{equation}
		where
		\begin{equation*}
			T_1=\sum_{k\in\triangle}4\i\Im z_k-\frac{1}{2\pi \i}\int_{\mathbb{R}}\log(1-|r(s)|^2)ds.
		\end{equation*}
		Below we discuss the asymptotic behaviour of $q(x,t)$ as $t$ large under different conditions.
		\begin{enumerate}
			\item If $\Lambda=\emptyset$, from ($\ref{mL1}$), ($\ref{asympmsol}$) and ($\ref{asympm3}$), we have
			\begin{equation}
				m(z)=I+\frac{1}{z}T(\infty)^{\sigma_3}\left[\sigma_2+m_1^{(3)}(x,t)-T_1^{-\sigma_3}+\mathcal{O}(\e^{-2\rho^2t})\right]T(\infty)^{-\sigma_3}.
			\end{equation}
			Using the reconstruction formula, we have the following asymptotics for $q(x,t)$
			\begin{equation}
				q(x,t)=-1+\mathcal{O}(t^{-1}).\label{68}
			\end{equation}
			\item If $\Lambda\neq\emptyset$, we have
			\begin{equation}
				m^{(sol)}(z)=I+\frac{m_1^{(sol)}(x,t)}{z}+\mathcal{O}(z^{-2}).\nonumber
			\end{equation}
		Plugging the above and ($\ref{asympm3}$) into ($\ref{*}$), we have
		\begin{equation}
			m(z)=I+\frac{1}{z}T(\infty)^{\sigma_3}[m_1^{(3)}(x,t)+m_1^{(sol)}(x,t)-T_1^{\sigma_3}]T(\infty)^{-\sigma_3}. \nonumber
		\end{equation}
		Again, using the reconstruction formula, we then have
		\begin{equation}\label{q}
			q(x,t)=\lim_{z\rightarrow\infty}\i zm_{21}(z)=T(\infty)^{-2}q^{(sol),N}(x,t)+\mathcal{O}(t^{-1}).
		\end{equation}
		Note that $|z_k|=1$ and $\bar{z}_k^{-1}=z_k$. Then (\ref{Tinfty}) gives us $T(\infty)^{-2}=1$, which directly leads to the
		asymptotic stability
		\begin{equation}
			|q(x,t)-q^{(sol),N}(x,t)|\leq ct^{-1}. \label{612}
		\end{equation}
	\end{enumerate}
\quad Next we show that  the $N$-Soliton solutions for the mKdV equation  have the property of soliton resolution.
Consider the order (\ref{order}),
and the sets$ \nabla =\{j:  j\geq j_0\}$,  $ \triangle=\{j: j<j_0 \}$, and  $\Lambda=\emptyset  \ {\rm or}\  \{j_0\}.  $
We could rewrite the asymptotics of $q^{(sol),N}$ (\ref{qsoln}) in terms of $q^\Lambda(x,t)$
\begin{equation*}
	q^{(sol),N}(x,t))= q^{\Lambda}(x,t) + \mathcal{O}(e^{-2\rho^2t}),
\end{equation*}
which combining with (\ref{612}) gives
\begin{equation*}
	q(x,t)-q^{\Lambda}(x,t)=\left(q(x,t)-q^{(sol),N}(x,t)\right)+\left(q^{(sol),N}(x,t)-q^{\Lambda}(x,t)\right)=\mathcal{O}(t^{-1}).
\end{equation*}
Again by  (\ref{qsoln}), we have
\begin{equation}
	q(x,t)= [sol(z_{j},x-x_j,t)+1] +\mathcal{ O }(t^{-1}), \ \  j=0, \cdots, N-1.\label{614}
\end{equation}
By (\ref{68}) and (\ref{614}), we get soliton resolution of the defocusing mKdV equation
\begin{equation}
	q(x,t)=- 1+\sum_{j=0}^{N-1}[sol(z_{j},x-x_j,t)+1] +\mathcal{O}(t^{-1}).
\end{equation}
\end{proof}

\begin{proof}[Proof of Theorem \ref{mainresult2}]
	For $q_0$ close enough to  $q^{(sol),M}(x,0)$, using the Lipschitz continuity ($\ref{21}$)-($\ref{22}$) in Proposition $\ref{analydiff}$, we can immediately get the relation of poles in ($\ref{M1}$).  Then applying Theorem $\ref{mainresult1}$ to $q_0$, we can obtain ($\ref{qse}$). Finally, simple calculations ($\ref{qse}$) give ($\ref{qse2}$).
\end{proof}


\bigskip\noindent\textbf{\sffamily Acknowledgments}
\par\medskip\noindent
We thank Gino Biondini for many insightful conversations on topics related to the present work.
This work is supported by  the National Natural Science
Foundation of China (Grant No. 11671095,  51879045).


\medskip
\let\em=\it

\makeatletter
\def\@biblabel#1{#1.}
\def\doibase{http://dx.doi.org/}
\def\reftitle#1{``#1''}
\def\booktitle#1{\textit{#1}}
\makeatother
\small

\end{document}